\numberwithin{equation}{section}
\tikzset{ 
    table/.style={
        matrix of nodes,
        row sep=-\pgflinewidth,
        column sep=-\pgflinewidth,
        nodes={rectangle, text width=2.5em, text height = 1.5em, align=center},
        text depth=1.25ex,
        text height=2.5ex,
        nodes in empty cells
    },
}
\let\blb\mathbb
\def\CC{{\blb C}}
\def\LL{{\blb L}}
\def\PP{{\blb P}}
\def\QQ{{\blb Q}}
\def\RR{{\blb R}}
\def\VV{{\blb V}}
\def\ZZ{{\blb Z}}
\let\cal\mathcal
\def\Ac{{\cal A}}
\def\Bc{{\cal B}}
\def\Cc{{\cal C}}
\def\Ec{{\cal E}}
\def\Fc{{\cal F}}
\def\Kc{{\cal K}}
\def\Nc{{\cal N}}
\def\Oc{{\cal O}}
\def\Qc{{\cal Q}}
\def\Rc{{\cal R}}
\def\Sc{{\cal S}}
\def\Tc{{\cal T}}
\def\Uc{{\cal U}}
\def\Wc{{\cal W}}
\newtheorem{lemma}{Lemma}[section]
\newtheorem{proposition}[lemma]{Proposition}
\newtheorem{theorem}[lemma]{Theorem}
\newtheorem{corollary}[lemma]{Corollary}
\newtheorem{definition}[lemma]{Definition}
\theoremstyle{remark}
\newtheorem{remark}[lemma]{Remark}
\def\dbcoh{D^b}
\def\coh{\operatorname{coh}}
\def\wt{\widetilde}
\def\wh{\widehat}
\def\arw{\longrightarrow}
\def\Hom{\operatorname{Hom}}
\def\coker{\operatorname{coker}}
\def\im{\operatorname{im}}
\def\deg{\operatorname{deg}}
\def\Ext{\operatorname{Ext}}
\def\cone{\operatorname{Cone}}
\def\Span{\operatorname{Span}}
\DeclareMathOperator{\Sym}{Sym}
\newcommand{\forgetful}{\operatorname{Fg}}
\DeclareMathOperator{\sHom}{\mathscr{H}\text{\kern -3pt {\calligra\large om}}\,}
\newcommand\quotient[2]{
        \mathchoice
            {
                \text{\raise1ex\hbox{$#1$}\Big/\lower1ex\hbox{$#2$}}%
            }
            {
                #1\,/\,#2
            }
            {
                #1\,/\,#2
            }
            {
                #1\,/\,#2
            }
    }
\def\namedlabel#1#2{\begingroup
    #2%
    \def\@currentlabel{#2}%
    \phantomsection\label{#1}\endgroup
}
\title[Fullness of the KP exceptional collection for the spinor tenfold]{Fullness of the Kuznetsov--Polishchuk exceptional\\ collection for the spinor tenfold}
\author[R. Moschetti]{Riccardo Moschetti}
\address{RM: Department of Mathematics G. Peano, University of Turin, via Carlo Alberto 10, 10123 Torino, Italy} 
\email{riccardo.moschetti@unito.it}
\author[M. Rampazzo]{Marco Rampazzo}
\address{MR: 
Department of Mathematics \\ University of Bologna \\ Piazza di Porta San Donato 5\\ 40126 Bologna, Italy.}
\email{marco.rampazzo3@unibo.it, marco.rampazzo.90@gmail.com}
\begin{document}

\maketitle

\begin{abstract}
    Kuznetsov and Polishchuk provided a general algorithm to construct exceptional collections of maximal length for homogeneous varieties of type $A,B,C,D$. We consider the case of the spinor tenfold and we prove that the corresponding collection is full, i.e. it generates the whole derived category of coherent sheaves. As a step of the proof, we construct some resolutions of homogeneous vector bundles which might be of independent interest.
\end{abstract}

\section{Introduction}
The derived category of coherent sheaves has been a subject of study in algebraic geometry for several decades: proven to be a powerful invariant for Fano and general type varieties by means of the reconstruction theorem \cite{bondalorlovreconstruction} and for $K3$ surfaces due to the derived Torelli theorem \cite{orlov_derived_torelli, huybrechts_torelli}, in general the interplay of the derived category with other invariants -- such as the Hodge structure and the class in the Grothendieck ring of varieties -- has yet to be fully understood. 
While these categories are usually complicated to describe, a common approach to understanding their structure is by providing a so-called \emph{full exceptional collection}, which is, roughly speaking, a finite list of objects which generate the whole category and satisfy simple cohomological conditions. While it is well-known that some classes of varieties cannot admit full exceptional collection, it is very much expected that rational homogeneous varieties do.\\
\\
When it comes to the problem of constructing full exceptional collections for rational homogeneous varieties, the techniques developed in \cite{beilinson} for projective spaces have been successfully generalized to Grassmannians and smooth quadrics in \cite{kapranov_Grassmannians}.
However, a general approach is still missing: only partial answers have been given in the case of classical and exceptional Grassmannians, mostly using specific, ad-hoc techniques. 
Up to products, every rational homogeneous variety can be obtained as a quotient $G/P^i$ where $P^i\subset G$ is the maximal parabolic subgroup corresponding to the $i$-th simple root (more on this notation in Section \ref{sec:vector_bundles}). Nowadays, to the best of the authors' knowledge, full exceptional collections have been found for the following classes for every positive integer $n$, denoting the number of nodes in the corresponding Dynkin diagram:
\begin{itemize}
    \item $G = A_n$, $P = P^k$ for any $k$ \cite{beilinson, kapranov_Grassmannians},
    \item $G = B_n$, $P = P^1$ (\cite{kapranov_Grassmannians}) or $P= P^2$ (\cite{KuznetsovExcCollGrassIsoLines}),
    \item $G = C_n$, $P = P^1$ (\cite{beilinson, kapranov_Grassmannians}), $P= P^2$  (\cite{KuznetsovExcCollGrassIsoLines}) or $P= P^n$ (\cite{FonarevLagrangian}),
    \item $G = D_n$, $P = P^1$ (\cite{kapranov_Grassmannians}) or $P= P^2$ (\cite{kuznetsovSmirnovResidual}),
    
\end{itemize}
Moreover, the following sporadic cases are also known to admit a full exceptional collection:
\begin{itemize}
    \item $G = B_3$, $P = P^3$ (\cite{kapranov_Grassmannians}) and $G = B_4$, $P = P^4$ (\cite{KuznetsovHyperplane}, \cite{KuznetsovExcCollGrassIsoLines}),
    \item $G = C_4$, $P = P^3$ (\cite{GusevaIGr}) and $G = C_5$, $P = P^3$ (\cite{NovikovThesis}),
    \item $G = D_i$, $P = P^i$ or $P = P^{i-1}$  for $i = 4$ (\cite{kapranov_Grassmannians}) and $i=5$ (\cite{KuznetsovHyperplane}, \cite{KuznetsovExcCollGrassIsoLines}),
    \item $G=E_6$, $P= P^1$ or $P= P^6$ (\cite{FaenziManivelCayleyII}),
    \item $G=F_4$, $P= P^1$ (\cite{SmirnovTypeFArxiv}) or $P= P^4$ (\cite{BKSCayley}),
    \item $G = G_2$, $P = P^1$ (\cite{kapranov_Grassmannians}) or $P= P^2$ (\cite{KuznetsovHyperplane}).
\end{itemize}
A crucial step towards a general answer to this problem is provided by Kuznetsov and Polishchuk in  \cite{KP_collections_isotropic}: there, exceptional collections of maximal length are given for all Grassmannians of type $A, B, C, D$, where the maximal length of an exceptional collection for $\dbcoh(X)$ with $X$ smooth projective is $\sum_p h^{(p,p)}(X)$ (see \cite[Corollary 2.16]{kuznetsov_derived_category_view}). In the following, we will refer to them as KP collections. 
However, fullness is still conjectural: in other words, it is not known whether the KP collection for a given $G/P$ generates the whole $\dbcoh(G/P)$. In this paper, we perform the construction of Kuznetsov and Polishchuk explicitly for the spinor tenfold (i.e. a connected component of the Grassmannian of isotropic five spaces in a ten dimensional vector space) obtaining:
\begin{equation}
    \label{eq:KP_collection_X}
    \begin{split}
        \langle &\Oc, \Oc(1), \Oc(2), \Uc^\vee(2), \Sym^2\Uc^\vee(2), \Oc(3), \Uc^\vee(3), \Sym^2\Uc^\vee(3), \\
        & \hspace{40pt}\Oc(4), \Uc^\vee(4),    \wh T_{ X }(5), \Oc(5), \Uc^\vee(5),  \wh T_{ X }(6), \Oc(6), \Oc(7) \rangle
    \end{split}
\end{equation}
where $\Uc^\vee$ is the dual of the pullback of the tautological bundle of $G(5, 10)$ and $\wh T_{ X }$ is the affine tangent bundle (see Equation \ref{eq:affine_tangent_bundle}). Then, as a main result, we have the following fact:
\begin{theorem}(Corollary \ref{cor:main})
    \label{thm:main}
    The exceptional collection of Equation \ref{eq:KP_collection_X} is full.
\end{theorem}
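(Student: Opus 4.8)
The plan is to prove fullness by exhibiting a chain of admissible subcategories linking the triangulated category generated by the collection \eqref{eq:KP_collection_X} to all of $\dbcoh(X)$, where $X$ is the spinor tenfold, and then showing the two coincide. Concretely, I would first recall that $X = \mathbb{OG}(5,10)$ carries a well-understood collection of homogeneous vector bundles (pullbacks of tautological and spinor bundles, together with their tensor and Schur functors), and that there are classical \emph{resolution-type} exact sequences relating these bundles across twists by powers of $\Oc(1)$. The key idea is: to show an object $E$ lies in the subcategory $\Tc$ generated by \eqref{eq:KP_collection_X}, it suffices to produce a finite resolution (or more generally a finite filtration by cones of maps) of $E$ by shifts of the sixteen generators appearing in \eqref{eq:KP_collection_X}. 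Since $\Tc$ is triangulated and closed under direct summands (every exceptional collection generates an admissible, hence thick, subcategory), any object built by finitely many cones and summands from those generators automatically lies in $\Tc$.

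The main technical input — announced in the abstract as ``resolutions of homogeneous vector bundles which might be of independent interest'' — is that I would construct, for each homogeneous bundle $F$ that does \emph{not} appear verbatim in the list (for instance higher twists $\Oc(m)$ with $m \geq 8$, twists $\Uc^\vee(m)$ and $\Sym^2\Uc^\vee(m)$ outside the displayed range, the spinor bundles, $\wh T_X(m)$ for other $m$, etc.), an explicit complex whose terms are direct sums of the sixteen generators (up to twist already in the list) and which is exact, i.e.\ quasi-isomorphic to $F$. These resolutions would be obtained via the Borel--Bott--Weil theorem applied to Koszul-type complexes: starting from the tautological short exact sequence on $G(5,10)$ restricted to $X$, the sequence defining $\wh T_X$ \eqref{eq:affine_tangent_bundle}, and their symmetric/exterior powers, one chases cohomology of the relevant homogeneous bundles to identify which twists of the generators can appear and with what multiplicity. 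The bookkeeping is representation-theoretic: one must verify, weight by weight, that the alternating sum in the Grothendieck group matches and that no ``unwanted'' irreducible summand survives.

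The concrete execution then runs as a downward/upward induction on the twisting parameter $m$: one shows $\Oc(m) \in \Tc$ for all $m \in \mathbb{Z}$ by using the Koszul complex of the embedding $X \hookrightarrow \mathbb{P}^{15}$ (the spinor embedding), whose terms are exterior powers of the defining bundle of $X$ in $\mathbb{P}^{15}$ — these exterior powers decompose, via Bott's theorem on $X$, into the homogeneous bundles of the collection twisted appropriately, so that $\Oc(m)$ for $m$ outside $\{0,\dots,7\}$ is expressed in terms of $\Oc(0),\dots,\Oc(7)$ and the other generators. Having all $\Oc(m)$, one bootstraps to all twists of $\Uc^\vee$, $\Sym^2\Uc^\vee$, $\wh T_X$, and the spinor bundle $\Sc$ using their defining sequences; and finally one invokes the fact — e.g.\ a theorem of Kapranov-type, or a spanning result for $\dbcoh$ of a homogeneous space by homogeneous bundles — that the collection of \emph{all} homogeneous bundles (equivalently, all these twists) generates $\dbcoh(X)$. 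Combining, $\Tc = \dbcoh(X)$, which is exactly Corollary~\ref{cor:main}.

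I expect the main obstacle to be the explicit construction and verification of the resolutions in the middle of the collection, particularly those involving $\wh T_X(5)$ and $\wh T_X(6)$: the affine tangent bundle is not a Schur functor applied to $\Uc$ alone, so its resolutions intertwine the tautological and the spinor bundles, and the Borel--Bott--Weil computations involve non-dominant weights passing through walls, where delicate cancellations occur. A secondary difficulty is ensuring the induction is genuinely finite — i.e.\ that the resolutions terminate and that at no stage does one need a bundle outside the list — which requires a careful a priori bound (from the dimension of $X$ and the length of the relevant Koszul complexes) on how far the twisting parameter must range. Once these resolutions are in hand, the reduction to a known spanning statement for homogeneous bundles is essentially formal.
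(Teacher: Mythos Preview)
Your proposal differs substantially from the paper's approach and contains at least one concrete gap. The paper does not attempt to show directly that the sixteen generators span $\dbcoh(X)$; instead, it exhibits an explicit finite sequence of mutations (in the sense of Section~\ref{subsec:mutations}) transforming the KP collection \eqref{eq:KP_collection_X} into the rectangular Lefschetz collection $\langle \Oc, \Uc^\vee, \Oc(1), \Uc^\vee(1), \dots, \Oc(7), \Uc^\vee(7)\rangle$ of Kuznetsov \cite{KuznetsovHyperplane}, which is already known to be full. Since mutations preserve the triangulated subcategory generated, fullness of the KP collection follows at once (this is Proposition~\ref{prop:main}). The resolutions announced in the abstract---Lemmas~\ref{lem:affine_tangent_bundle_as_a_kernel} and \ref{lem:big_sequence}---are used only to identify the cones produced by individual mutation steps, not to resolve arbitrary homogeneous bundles by the sixteen generators.

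The specific gap in your plan is the Koszul step: the spinor tenfold is \emph{not} a complete intersection in $\PP^{15}$; it has codimension $5$ but is cut out by ten quadrics, so there is no ``Koszul complex of the embedding'' whose terms are exterior powers of a single defining bundle restricted to $X$. Hence your mechanism for propagating $\Oc(m)\in\Tc$ from the range $0\le m\le 7$ to all $m\in\ZZ$ does not work as stated, and the downstream bootstrap to all twists of $\Uc^\vee$, $\Sym^2\Uc^\vee$, and $\wh T_X$ is left without a starting point. Your closing appeal to a ``Kapranov-type spanning result'' is also circular in this context: for $X$ the cleanest available statement that homogeneous bundles generate $\dbcoh(X)$ \emph{is} the existence of a full exceptional collection of such bundles, which is precisely what you are trying to establish. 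The paper sidesteps all of this by reducing to a collection whose fullness is already in the literature.
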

The proof is given by finding a sequence of mutations  (see Section \ref{subsec:mutations}) which relate such collection to the one found by Kuznetsov in \cite[Section 6.2]{KuznetsovHyperplane}. Since the latter is full, we conclude that also the KP collection is (Proposition \ref{prop:main}). 

\subsection*{Plan of the paper} In Section \ref{sec:vector_bundles} we give a brief introduction to the spinor tenfold and some vector bundles over such variety. Then, we establish resolutions of some homogeneous vector bundles (Lemma \ref{lem:affine_tangent_bundle_as_a_kernel} and Lemma \ref{lem:big_sequence}). In Section \ref{sec:Constructing_KP_collection} we perform the construction of the KP collection, and we prove its fullness in Section \ref{sec:fullness}. Finally, all the technical details of the cohomology computations based on the Borel--Bott--Weil theorem are carried out in Appendix \ref{sec:Borel--Bott--Weil}.

\subsection*{Acknowledgments} We would like to express our gratitude to Enrico Fatighenti, Micha\l\ Kapustka and Giovanni Mongardi for reading a first draft of this paper and providing valuable corrections and insights. Moreover, we thank Francesco Denisi, Jacopo Gandini and Luca Migliorini for helpful discussions and comments. The authors are members of GNSAGA of INdAM. MR is supported by PRIN2020KKWT53. 

\subsection{Notations and conventions}\label{subsec:notations_and_conventions}
    We shall work over the field of complex numbers.\\
    We will denote the cohomology of vector bundles as a direct sum of shifted cohomology spaces, and we will use the same notation for Ext spaces. For example, $\Ext_{\PP^n}^\bullet(\Oc, \Omega_{\PP^n}^1) \simeq \CC[-1]$.

\section{Vector bundles on the spinor tenfold}\label{sec:vector_bundles}


\subsection{The spinor tenfold and its properties}
Fix a vector space $V_n$ of dimension $n$. Let us call $G(k, V_n)$ the Grassmannian parametrizing $k$-dimensional subspaces of $V_n$. It is a smooth projective variety of dimension $k(n-k)$ projectively embedded in $\PP(\wedge^k V_n)$ by the Pl\"ucker embedding. 
By $OG(k, V_n)$ we denote the orthogonal Grassmannian of $k$-dimensional linear spaces, i.e. the smooth projective variety parametrizing affine $k$-spaces which are isotropic with respect to a fixed nondegenerate symmetric form on $V_n$. 
One can easily see that $OG(k, V_n)$ is cut by a general section of $\Sym^2\Uc^\vee$ in $G(k, V_n)$, where $\Uc$ is the tautological bundle. 
All orthogonal Grassmannians are connected except for $OG(n, V_{2n})$, which has two isomorphic connected components of dimension $n(n-1)/2$. 
These are the varieties parametrizing isotropic $n$-spaces which intersect a fixed $n$-space in even (respectively odd) dimension. As usual, we denote by \emph{spinor tenfold} each of the connected components of $OG(5, V_{10})$. 
In general, it is well-known that each connected component of $OG(n, V_{2n})$, also denoted by $OG(n, V_{2n})_\pm$, is isomorphic to $OG(n-1, V_{2n-1})$, where $V_{2n-1}$ is a $2n-1$-dimensional vector space. This fact can be proven with classical techniques, see \cite[Theorem 22.14]{HarrisAGFirst}. 

\subsubsection{A quick overview on homogeneous varieties}\label{subsubsec:homogeneous_varieties}
    The following is standard material, which we recall for the purpose of notation and self-containedness: for a more exhaustive treatment see, for instance, \cite{baston_eastwood_penrose_transform} and the sources therein. Let us consider a simple Lie group $G$, with Lie algebra $\mathfrak g$. Fix a Cartan subalgebra $\mathfrak h\subset\mathfrak g$ and call $\Delta$ the set of roots induced by its adjoint action, with $\Delta_+\subset \Delta$ the set of positive roots and $\Delta_- := \Delta\setminus\Delta_+$. One has a root space decomposition for $\mathfrak g$:
    \begin{equation*}
        \mathfrak g = \mathfrak h\oplus \bigoplus_{\alpha\in\Delta}\mathfrak g_\alpha
    \end{equation*}
    where the root eigenspaces $\mathfrak g_\alpha$ are given by
    \begin{equation*}
        \mathfrak g_\alpha = \{ g\in \mathfrak g : [h, g] = \alpha(h) g \,\,\,\text{for}\,\,\, h\in\mathfrak h \}
    \end{equation*} 
    Inside $\mathfrak g$, one distinguishes the \emph{standard Borel subalgebra} given by the expression
    \begin{equation*}
        \mathfrak b = \mathfrak h \oplus \mathfrak n
    \end{equation*}
    where $\mathfrak n = \bigoplus_{\alpha\in\Delta_+}\mathfrak g_\alpha$ is a nilpotent summand.\\
    \\
    A subalgebra $\mathfrak p\subset\mathfrak g$ is called parabolic if it contains $\mathfrak b$ as a subalgebra. Parabolic subalgebras can be characterized as follows. 
    Fix a set $\Sc=\{\alpha_1,\dots,\alpha_r\}$ of simple roots, define subsets $\Sc_{i_1\dots i_l}:= \Sc\setminus\{\alpha_{i_1},\dots,\alpha_{i_l}\}$ and $\Delta_{i_1\dots i_l} = \Span(\Sc_{i_1\dots i_l})\cap\Delta$. We consider the \emph{levi subalgebras}
    \begin{equation*}
        \mathfrak l_{i_1\dots i_l} = \mathfrak h\oplus\bigoplus_{\alpha\in\Delta_{i_1\dots i_l}}\mathfrak g_\alpha
    \end{equation*}
    and some nilpotent subalgebras of the form
    $$
    \mathfrak u_{{i_1\dots i_l}} = \bigoplus_{\alpha\in\Delta^+\setminus \Delta_{i_1\dots i_l}}\mathfrak g_\alpha
    $$
    Then the direct sum
    \begin{equation*}
        \mathfrak p_{{i_1\dots i_l}} = \mathfrak l_{{i_1\dots i_l}}\oplus\mathfrak u_{{i_1\dots i_l}}.
    \end{equation*}
    is a subalgebra and contains $\mathfrak b$, hence it is parabolic. The list of parabolic subalgebras $\{\mathfrak p_{i_1,\dots,i_l}\}$ is exhaustive up to conjugation.\\
    \\
    If $\mathfrak g$ is the Lie algebra of a simple Lie group $G$, it is well-known that a subgroup $P\subset G$ is parabolic if and only if its Lie algebra is a parabolic subalgebra. 
    Hence, in the following, we will associate to a rational homogeneous variety $G/P$ a marked Dynkin diagram.
    Such a diagram corresponds to the choice of $G$, while the markings correspond to the set $\Sc_{i_1\dots i_l}$ which fixes the parabolic subgroup (with respect to the choice of an ordering of simple roots). 
    We will also adopt the notation $G/P^{i_1, \dots, i_l}$ to describe the quotient of $G$ by the parabolic subgroup whose parabolic subalgebra is $\mathfrak p_{i_1,\dots, i_l}$.
    \subsubsection{Homogeneous vector bundles over \texorpdfstring{$G/P$}{something}}
    It is a general fact that one has an equivalence of categories:
    \begin{equation*}
        \coh^G(G/P)\simeq \operatorname{Rep}(P)
    \end{equation*}
    where on the left hand side we have the category of $G$-equivariant coherent sheaves on $G/P$ and on the right hand side there is the category of finite dimensional representations of $P$. 
    Note that we can identify irreducible representations of $P$ and $L$: in fact, any irreducible representation of $P$ is determined by its restriction to $L$, since the unipotent radical of $P$ acts trivially.
    Thus, a homogeneous, irreducible vector bundle $\Ec_\lambda$ on $G/P$ is uniquely determined by the highest weight $\lambda$ of a representation of $L$. More explicitly, we write:
    \begin{equation*}
        \Ec_\lambda := G\times^P \VV^P_\lambda
    \end{equation*}
    where $\VV^P_\lambda$ is the vector space on which the $P$-representation of highest weight $\omega$ acts, and with $G\times^P \VV^P_\lambda $ we denote the quotient of $G\times \VV^P_\lambda$ by the equivalence relation $(p.a, b) \sim (a, p.b)$ for every $p\in P$ and every $(a, b)\in G\times \VV^P_\lambda$. Let us call $W_L$ and $W_G$ the Weyl groups of respectively $L$ and $G$, and let us denote by $w_0^L$ and $w_0^G$ the respective longest elements. For later use, we recall that the Weyl reflections associated to simple roots act as follows:
    \begin{equation}
    \label{eq:weyl_reflection}
        S_{\alpha_i}: \alpha_j \longmapsto \alpha_j - A_{ij}\alpha_i
    \end{equation}
    where $A$ is the Cartan matrix. The following lemma is known to experts, see for instance \cite[Equation 8]{KP_collections_isotropic}, \cite[Lemma 2.1]{SmirnovTypeFArxiv}
    \begin{lemma}\label{lem:smirnov}
        One has:
        \begin{enumerate}
            \item $(\Ec_\lambda)^\vee \simeq \Ec_{-w_0^L\lambda}$
            \item if $\VV_\lambda^L\otimes \VV_\mu^L = \bigoplus \VV_\nu^L$, then $\Ec_\lambda\otimes \Ec_\mu = \bigoplus \Ec_\nu$.
        \end{enumerate}
    \end{lemma}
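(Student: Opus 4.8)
The plan is to transport both statements through the equivalence $\coh^G(G/P)\simeq\operatorname{Rep}(P)$ recalled above, under which $\Ec_\lambda$ corresponds to $\VV^P_\lambda$ and the functor $G\times^P(-)\colon\operatorname{Rep}(P)\to\coh^G(G/P)$ is an exact tensor functor: dualization and tensor product of equivariant sheaves are computed fibrewise, so $(G\times^P\VV)^\vee\simeq G\times^P(\VV^\vee)$ and $(G\times^P\VV)\otimes(G\times^P\WW)\simeq G\times^P(\VV\otimes\WW)$, with the evident $G$-equivariant structures. Thus it suffices to prove the analogous statements at the level of $P$-representations and then apply $G\times^P(-)$.

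For (1), I would argue as follows. The dual $(\VV^P_\lambda)^\vee$ is an irreducible $P$-module, and since the unipotent radical $P_u\subset P$ acts trivially on $\VV^P_\lambda$ — this is exactly the observation that irreducible $P$-modules factor through $L=P/P_u$ — it also acts trivially on the dual. Hence $(\VV^P_\lambda)^\vee$ is inflated from the irreducible $L$-module $(\VV^L_\lambda)^\vee$, and it remains to identify its highest weight. The set of weights of $\VV^L_\lambda$ is $W_L$-stable with lowest weight $w_0^L\lambda$, and the weights of the dual are the negatives of these, so the highest weight of $(\VV^L_\lambda)^\vee$ is $-w_0^L\lambda$ (which is dominant since $w_0^L$ interchanges the dominant and antidominant chambers). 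Therefore $(\VV^P_\lambda)^\vee\simeq\VV^P_{-w_0^L\lambda}$ as $P$-modules, and applying $G\times^P(-)$ gives $(\Ec_\lambda)^\vee\simeq\Ec_{-w_0^L\lambda}$.

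For (2), the $P$-module $\VV^P_\lambda\otimes\VV^P_\mu$ again has trivial $P_u$-action (the action is trivial on each factor), so it is inflated from an $L$-module, and as such it equals $\VV^L_\lambda\otimes\VV^L_\mu=\bigoplus\VV^L_\nu$ by hypothesis. A decomposition of a $P$-module with trivial $P_u$-action into irreducible $L$-submodules is automatically a decomposition into $P$-submodules, since on such a module the $L$-action and the $P$-action determine one another; hence $\VV^P_\lambda\otimes\VV^P_\mu=\bigoplus\VV^P_\nu$ as $P$-modules. Since $G\times^P(-)$ is additive, we conclude $\Ec_\lambda\otimes\Ec_\mu=\bigoplus\Ec_\nu$.

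I do not expect a genuine obstacle here: the only real content is the dictionary ``irreducible $P$-module $\leftrightarrow$ irreducible $L$-module'' via the trivial action of $P_u$, combined with two classical facts about reductive groups (the highest weight of a dual module and the exact monoidal nature of the associated-sheaf functor). The one point that deserves a careful line, rather than citing it as a black box, is the fibrewise identification of $(G\times^P\VV)^\vee$ and $(G\times^P\VV)\otimes(G\times^P\WW)$ with $G\times^P(\VV^\vee)$ and $G\times^P(\VV\otimes\WW)$, which can be checked directly on the defining quotient presentations.
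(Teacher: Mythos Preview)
Your argument is correct and complete. The paper itself does not provide a proof of this lemma: it is stated as ``known to experts'' with references to \cite[Equation 8]{KP_collections_isotropic} and \cite[Lemma 2.1]{SmirnovTypeFArxiv}, so your write-up is more detailed than what appears in the paper. Your route --- transporting both statements through the equivalence $\coh^G(G/P)\simeq\operatorname{Rep}(P)$, reducing to $L$-representations via the trivial action of the unipotent radical, and invoking the standard fact that the dual of $\VV^L_\lambda$ has highest weight $-w_0^L\lambda$ --- is exactly the standard argument one finds in the cited sources, and there are no gaps.
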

\subsubsection{Homogeneous description of \texorpdfstring{$OG(5, V_{10})_+$}{something}}\label{subsubsec:D5_description}
    Consider the simple group of type $D_5$ and its associated Dynkin diagram, with the ordering of the nodes chosen from left to right and from top to bottom for the last two (Bourbaki convention). We call $D_5/P^i$ the generalized Grassmannians of type $D_5$. In particular, the two connected components $OG(5, V_{10})_\pm$ of $OG(5, V_{10})$ can be identified, respectively, with $G/P^4$ and $G/P^5$. Hence, we can describe $OG(5, V_{10})_+$ with the following marked Dynkin diagram:
    $$
        \dynkin[root radius = 3pt, edge length = 25pt]{D}{ooo*o}
    $$
    The Levi subgroup is $SL(5)\times\CC^*$.
        Fixing an orthonormal basis $\{e_1, \dots, e_5\}$ of $\QQ^5$, one can write the simple roots and the fundamental weights as follows:
    \begin{equation*}
        \begin{split}
            \left.
            \begin{array}{cl}
                \alpha_1 & = e_1 - e_2 \\[2pt]
                \alpha_2 & = e_2 - e_3 \\[2pt]
                \alpha_3 & = e_3 - e_4 \\[2pt]
                \alpha_4 & = e_4 - e_5 \\[2pt]
                \alpha_5 & = e_4 + e_5. 
            \end{array}
            \right.
            & \hspace{60pt}
            \left.
            \begin{array}{cl}
                \omega_1 & = e_1 \\[2pt]
                \omega_2 & = e_1 + e_2 \\[2pt]
                \omega_3 & = e_1 + e_2 + e_3\\[2pt]
                \omega_4 & = \frac{1}{2}(e_1 + e_2 + e_3 + e_4 - e_5) \\[2pt]
                \omega_5 & = \frac{1}{2}(e_1 + e_2 + e_3 + e_4 + e_5). 
            \end{array}
            \right.
        \end{split}
    \end{equation*}
    Consider now a weight $\omega = \sum a_i\omega_i$. Then, by applying Equation \ref{eq:weyl_reflection} to the present setting, we find:
    \begin{equation*}
        \left.
        \begin{array}{cl}
                S_{\alpha_1}\omega &= -a_1\omega_1 + (a_2+a_1)\omega_2 + a_3\omega_3 + a_4\omega_4 + a_5\omega_5 \\[2pt]
                S_{\alpha_2}\omega &= (a_1+a_2)\omega_1 -a_2\omega_2 + a_3\omega_3 + a_4\omega_4 + a_5\omega_5 \\[2pt]
                S_{\alpha_3}\omega &= a_1\omega_1 + (a_2+a_3)\omega_2 -a_3\omega_3 + (a_4+a_3)\omega_4 + (a_5+a_3)\omega_5 \\[2pt]
                S_{\alpha_4}\omega &= a_1\omega_1 + a_2\omega_2 + (a_3+a_4)\omega_3 - a_4\omega_4 + a_5\omega_5 \\[2pt]
                S_{\alpha_5}\omega &= a_1\omega_1 + a_2\omega_2 + (a_3+a_5)\omega_3 + a_4\omega_4 - a_5\omega_5 \\
        \end{array}
        \right.
    \end{equation*}
    Call now $\Uc$ the pullback to $OG(5, V_{10})_+$ of the tautological bundle of $G(5, V_{10})$ under the natural embedding given by the fact that every element of $OG(5, V_{10})_+$ also belongs to $G(5, V_{10})$, and call $\Oc(1)$ the pullback of the hyperplane bundle of $\PP^{15}$ under the spinor embedding (see \cite[Section 6]{ranestad_schreyer_vsps} for an explicit description of such embedding in a local chart). Note that this is \emph{not} the pullback of $\Oc_{G(5, V_{10})}(1)$, which is $\Oc(2)$. One has, for every $k\in\ZZ$, for every positive $l\in\ZZ$ and for $m = 1,2,3$:
    \begin{equation}\label{eq:bundles_to_weights_D5}
            \Oc(k) = \Ec_{k\omega_4} \hspace{30pt} \Sym^l\Uc^\vee = \Ec_{l\omega_1} \hspace{30pt} \wedge^m\Uc^\vee = \Ec_{\omega_m} \hspace{30pt} \wedge^4\Uc^\vee = \Ec_{\omega_4+\omega_5}.
    \end{equation}
    Moreover, given a homogeneous irreducible vector bundle $\Ec_\lambda$, we have $\Ec_\lambda\otimes\Oc(k) = \Ec_{\lambda+k\omega_4}$. Observe that $\omega_1$ is dominant, and hence $\Uc^\vee$ is globally generated. Its sections are computed in Lemma \ref{lem:D5_dominant_weights} to be $H^\bullet(OG(5, V_{10})_+, \Uc^\vee) = \VV^{D_5}_{\omega_1}\simeq \CC^{10}$. On the other hand, $\Uc$ has no cohomology (Lemma \ref{lem:cohomology_U5}). We can identify the pullback of the tautological sequence of $G(5, V_{10})$ with the following:
    \begin{equation}\label{eq:tautological_sequence_rank_5}
        0\arw\Uc\arw \VV^{D_5}_{\omega_1}\otimes\Oc\arw \Uc^\vee\arw 0
    \end{equation}
    where we used the standard identification $\Uc^\vee\simeq (\VV^{D_5}_{\omega_1}\otimes\Oc)/\Uc$ and $\VV^{D_5}_{\omega_1}\simeq V_{10}$.
\subsubsection{Homogeneous description of $OG(4, V_{9})$}\label{subsubsec:B4_description}
    Let us carry out a similar analysis for $OG(4, V_{9})$. To distinguish from the previous case, let us call $Q^i$ the parabolic subgroups of the simple Lie group of type $B_4$, and $B_4/Q^i$ the associated generalized Grassmannians. If we consider the Dynkin diagram of type $B_4$, with the ordering of simple roots from left to right, we have $OG(4, V_{9}) = G/Q^4$, and the following marked Dynkin diagram:
    $$
        \dynkin[root radius = 3pt, edge length = 25pt]{B}{ooo*}
    $$
    In this case, the Levi subgroup is $SL(4)\times\CC^*$. With respect to an orthonormal basis $\{\epsilon_1, \dots, \epsilon_4\}$ of $\QQ^4$ we can choose the simple roots and the fundamental weights to be:
    \begin{equation*}
        \begin{split}
            \hspace{10pt}
            \left.
            \begin{array}{cl}
                \beta_1 & = \epsilon_1 - \epsilon_2 \\
                \beta_2 & = \epsilon_2 - \epsilon_3 \\
                \beta_3 & = \epsilon_3 - \epsilon_4 \\
                \beta_4 & = \epsilon_4. 
            \end{array}
            \right.
            & \hspace{40pt}
            \left.
            \begin{array}{cl}
                \omega_1 & = \epsilon_1 \\
                \omega_2 & = \epsilon_1 + \epsilon_2 \\
                \omega_3 & = \epsilon_1 + \epsilon_2 + \epsilon_3\\
                \omega_4 & = \epsilon_1 + \epsilon_2 + \epsilon_3 + 2\epsilon_4 
            \end{array}
            \right.
        \end{split}
    \end{equation*}
    By Equation \ref{eq:weyl_reflection}, for a weight $\nu = \sum b_i\nu_i$, the Weyl reflections associated to the simple roots $\{\beta_1,\dots, \beta_4\}$ are:
    \begin{equation*}
        \left.
        \begin{array}{cl}
                S_{\beta_1}\nu &= -b_1\nu_1 + (b_2+b_1)\nu_2 + b_3\nu_3 + b_4\nu_4\\[2pt]
                S_{\beta_2}\nu &= (b_1+b_2)\nu_1 -b_2\nu_2 + b_3\nu_3 + b_4\nu_4\\[2pt]
                S_{\beta_3}\nu &= b_1\nu_1 + (b_2+b_3)\nu_2 -b_3\nu_3 + (b_4+2b_3)\nu_4\\[2pt]
                S_{\beta_4}\nu &= b_1\nu_1 + b_2\nu_2 + (b_3+b_4)\nu_3 - b_4\nu_4
        \end{array}
        \right.
    \end{equation*}
    Denote by $\Rc$ the pullback to $OG(4, V_{9})$ of the tautological bundle of $G(4, V_9)$ and, as above, call $\Oc(1)$ the pullback of the hyperplane bundle of $\PP^{15}$. This again is not the pullback of $\Oc_{G(4, V_9)}(1)$, which is $\Oc(2)$. One has, for every $k\in\ZZ$, for every positive $l\in\ZZ$ and for $m = 1,2,3$:
    \begin{equation}\label{eq:bundles_to_weights_B4}
            \Oc(k) = \Ec_{k\nu_4} \hspace{30pt} \Sym^l\Rc^\vee = \Ec_{l\nu_1} \hspace{30pt} \wedge^m\Rc^\vee = \Ec_{\nu_m}.
    \end{equation}
    Pulling back the tautological sequence of $G(4, V_9)$ we find:
    \begin{equation}\label{eq:tautological_sequence_rank_4}
        0\arw\Rc\arw \VV^{B_4}_{\nu_1}\otimes\Oc\arw \Wc\arw 0
    \end{equation}
    where $\VV^{B_4}_{\nu_1}\simeq V_9$.
\subsection{Some exact sequences}
    The tautological bundles of the spinor tenfold with respect to its dual homogeneous description, and their Schur powers, interact in an interesting way giving rise to some exact sequences which will be useful in the next section. Let us describe how they can be constructed. Once for all, we set $X:=OG(5, V_{10})_+\simeq OG(4, V_{9})$.
    \begin{lemma}\label{lem:U4_and_U5}
        There is a short exact sequence on $X$:
        \begin{equation}
            \label{eq:sequence_of_tautologicals}
            0\arw\Rc\arw\Uc\arw\Oc\arw 0
        \end{equation}
        Moreover, one has an isomorphism $\Wc\simeq \Uc^\vee$.
    \end{lemma}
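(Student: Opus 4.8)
The plan is to prove both statements geometrically, using the fibrewise description of the bundles under the isomorphism recalled in Section~\ref{subsubsec:D5_description}. Fix once and for all a non-degenerate hyperplane $V_9\subset V_{10}$ (i.e.\ $V_9=v^\perp$ for a non-isotropic vector $v$), so that every $V_9$-isotropic $4$-space $A$ extends uniquely to a $V_{10}$-isotropic $5$-space $\tilde A$ lying in the component $OG(5,V_{10})_+$; this is the isomorphism $\iota\colon X\xrightarrow{\sim}OG(5,V_{10})_+$, $[A]\mapsto[\tilde A]$. Under $\iota$ the fibre of $\Rc$ over $[A]$ is $A$ and that of $\Uc$ is $\tilde A$, both regarded as subspaces of $V_{10}$ with $A\subset V_9$.

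The first thing I would establish is the set-theoretic identity $A=\tilde A\cap V_9$ for every $A$: one inclusion is clear, and if it were strict we would have $\tilde A\subseteq V_9$, making $\tilde A$ a $5$-dimensional isotropic subspace of the non-degenerate quadratic space $V_9$, which is impossible since the Witt index of $V_9$ is $4$. In particular $\tilde A\not\subseteq V_9$. This has two consequences. First, the fibrewise inclusions $A\hookrightarrow\tilde A$ globalise to a morphism of vector bundles $\Rc\to\Uc$, namely the factorisation through the subbundle $\Uc\subset V_{10}\otimes\Oc$ of the composite $\Rc\hookrightarrow V_9\otimes\Oc\hookrightarrow V_{10}\otimes\Oc$; it is injective on every fibre, so (since $\rk\Uc-\rk\Rc=1$) its cokernel $\Lc$ is a line bundle. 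Second, composing $\Uc\hookrightarrow V_{10}\otimes\Oc$ with $V_{10}\otimes\Oc\twoheadrightarrow(V_{10}/V_9)\otimes\Oc\simeq\Oc$ gives a map which vanishes on $\Rc$ and is fibrewise the isomorphism $\tilde A/A\xrightarrow{\sim}V_{10}/V_9$; hence it descends to an isomorphism $\Lc\xrightarrow{\sim}\Oc$, which yields the short exact sequence \eqref{eq:sequence_of_tautologicals}.

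For the identification $\Wc\simeq\Uc^\vee$ I would compare the tautological sequences \eqref{eq:tautological_sequence_rank_4} and \eqref{eq:tautological_sequence_rank_5}. Since $\Rc\subset\Uc$, the square formed by the inclusions $\Rc\hookrightarrow\Uc$ and $V_9\otimes\Oc\hookrightarrow V_{10}\otimes\Oc$ commutes, so the composite $V_9\otimes\Oc\hookrightarrow V_{10}\otimes\Oc\twoheadrightarrow\Uc^\vee$ kills $\Rc$ and induces $h\colon\Wc\to\Uc^\vee$ fitting into a commutative ladder of short exact sequences. Now apply the snake lemma: the two leftmost vertical maps are injective, their cokernels are $\Uc/\Rc$ and $(V_{10}/V_9)\otimes\Oc$, and the induced map between these cokernels is exactly the isomorphism $\Lc\xrightarrow{\sim}\Oc$ produced above. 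Therefore $\ker h=\coker h=0$, so $h$ is an isomorphism.

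The only genuinely delicate points — and where I would be most careful — are (i) checking that the fibrewise-defined maps above are morphisms of sheaves (this follows from the standard fact that a bundle map landing fibrewise inside a subbundle factors through it, the quotient being locally free) and (ii) verifying that the map between cokernels appearing in the snake lemma is literally the isomorphism $\Lc\to\Oc$ constructed in the first step, not merely an abstractly isomorphic one; once these are nailed down the argument is purely formal. One should also keep in mind that the non-degeneracy of $V_9$ is used both for the Witt-index bound and for \eqref{eq:tautological_sequence_rank_4} to be the genuine orthogonal tautological sequence.
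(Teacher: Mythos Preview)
Your proof is correct, and for the second claim it matches the paper's snake-lemma argument essentially verbatim. For the first claim, however, you take a genuinely different and more elementary route. The paper does not construct the sequence directly: instead it computes $\Ext^1(\Oc,\Rc)\simeq\CC$ (via Borel--Bott--Weil), defines $E$ as the unique nontrivial extension, and then proves $E\simeq\Uc$ by showing both bundles are stable of the same slope with a nonzero map between them---stability of $E$ being checked via Hoppe's criterion and several further cohomology vanishings. Your argument sidesteps all of this: you build the map $\Rc\to\Uc$ fibrewise, use the Witt-index bound to show it is a subbundle inclusion, and trivialise the cokernel explicitly via the projection $V_{10}\twoheadrightarrow V_{10}/V_9$. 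This is cleaner and avoids the stability machinery entirely; the paper's approach, on the other hand, makes the uniqueness of the extension class manifest inside the proof (a fact invoked later when computing $\RR_\Oc\Rc^\vee$), though that uniqueness is also recorded separately in the appendix. Your cautionary remarks about (i) and (ii) are well placed but routine once stated.
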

    \begin{proof}
        One has $\Ext^1(\Oc, \Rc) \simeq H^1( X , \Rc) \simeq \CC$ by Corollary \ref{cor:cohomology_U4}. Hence, up to isomorphisms, there exists a unique nontrivial extension of $\Rc$ by $\Oc$. Let us call $E$ such a nontrivial extension, and consider the short exact sequence:
        \begin{equation}\label{eq:extension_E_sequence}
            0\arw\Rc\arw E\arw \Oc\arw 0.
        \end{equation}
        By applying the functor $\Hom(-, \Uc)$ we have:
        \begin{equation*}
            0\arw\Hom(\Oc,\Uc)\arw\Hom(E,\Uc)\arw
            \Hom(\Rc, \Uc)\arw\Ext^1(\Oc, \Uc)\arw\cdots
        \end{equation*}
        and since by Lemma \ref{lem:cohomology_U5} the vector bundle $\Uc$ has no cohomology, the middle arrow is an isomorphism. Note that one can construct a nontrivial morphism $\Rc\arw\Uc$ by an embedding $V_9\subset V_{10}$ together with the fact that an isotropic $4$-space in $V_{10}$ can be completed to an isotropic $5$-space of $OG_+(5, 10)$ in exactly one way: hence there is at least a nonzero map $E \arw\Uc$. If both bundles are stable and they have the same slope, then the map must be an isomorphism: they have the same slope (both are extensions of $\Oc$ and $\Rc$), and $\Uc$ is stable because it is homogeneous and irreducible, let us prove that $E$ is stable as well. By Hoppe's criterion (see \cite{jardimmenetprataearp} and its application in \cite{manivel}), $E$ is stable if the following bundles have no global sections:
        $$
        E, \hspace{15pt} \wedge^2 E, \hspace{15pt} \wedge^3 E(1), \hspace{15pt} \wedge^4 E(1).
        $$
        Observe that, comparing the $r$-th and the $(r-1)$-rth exterior powers of the sequence \ref{eq:extension_E_sequence}, one finds a short exact sequence:
        \begin{equation*}
            0\arw \wedge^r \Rc\arw \wedge^r E\arw \wedge^{r-1}\Rc \arw 0.
        \end{equation*}
        By this and Lemma \ref{lem:vanishing_stability} we immediately see that $\wedge^2 E$, $\wedge^3 E(1)$ and $\wedge^4 E(1)$ have no sections (for the last we also use the fact that $\wedge^4 \Rc(1) \simeq \Oc(-1)$).\\
        \\
        Let us now prove that $E$ has no global sections. 
        Assume it had: then $H^0(X, E)\simeq H^1(X, E)\simeq H^0(X, \Oc)\simeq\CC$ because of the long exact sequence of cohomologies of the sequence \ref{eq:extension_E_sequence}. 
        Moreover, since by the sequence \ref{eq:extension_E_sequence} the top chern class of $E$ is zero, we have both an injective map $\Oc\arw E$ and a surjection $E\arw\Oc$ (the latter again from \ref{eq:extension_E_sequence}). The composition of these maps is an element of $\Hom(\Oc, \Oc)\simeq\CC$ and hence a multiple of the identity (nonvanishing by the exactness of \ref{eq:extension_E_sequence}): this implies that the sequence \ref{eq:extension_E_sequence} splits, providing a contradiction. 
        Thus, by Hoppe's criterion we conclude that $E$ is stable, and since it has the same slope of $\Uc$ and there exists a nonzero morphism $\Uc\arw E$, they are isomorphic, concluding the proof of the first claim.\\
        Let us now turn our attention to the second claim. One has a commutative diagram:
        \begin{equation*}
            \begin{tikzcd}
                0\ar{r} & \Rc\ar{r}\ar[hook]{d} & V_9\otimes\Oc\ar{r}\ar[hook]{d} & \Wc\ar{r}\ar{d}{\phi} & 0 \\
                0\ar{r} & \Uc\ar{r}\ar[maps to]{d} & V_{10}\otimes\Oc\ar{r}\ar[maps to]{d} & \Uc^\vee\ar{r} & 0 \\
                 & \Oc & \Oc & &
            \end{tikzcd}
        \end{equation*}
        and by the snake lemma, we conclude that $\phi$ is an isomorphism.
    \end{proof}
    The following lemma, and its proof, are already known to experts (see the MathOverflow thread \cite{mo_thread} for instance):
    \begin{lemma}\label{lem:tangent_bundle_as_an_extension}
        The tangent bundle $T_{ X }$ is the unique extension, up to isomorphism, fitting in the following exact sequence:
        \begin{equation}\label{eq:tangent_bundle_as_an_extension}
            0\arw\Rc^\vee\arw T_{ X }\arw \wedge^2 \Rc^\vee \arw 0.
        \end{equation}
        Moreover, one has $T_X\simeq\wedge^2\Uc^\vee$.
    \end{lemma}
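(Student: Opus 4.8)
The plan is to derive everything from the embedding of $X$ into the ordinary Grassmannian $G(5,V_{10})$, using the tautological sequence of Lemma~\ref{lem:U4_and_U5}. First I would prove the isomorphism $T_X\cong\wedge^2\Uc^\vee$. Recall that $X=OG(5,V_{10})_+$ is a connected component of the zero locus in $G(5,V_{10})$ of the section of $\Sym^2\Uc^\vee$ determined by the quadratic form on $V_{10}$; since this zero locus has the expected codimension $15=\operatorname{rk}\Sym^2\Uc^\vee$, the normal bundle of $X$ in $G(5,V_{10})$ is $\Sym^2\Uc^\vee|_X$, and the identification is $SO(V_{10})$-equivariant because the form is. The same form gives $V_{10}\cong V_{10}^\vee$ and, as every point of $X$ is an isotropic $5$-plane $U$ with $U^\perp=U$, an $SO(V_{10})$-equivariant isomorphism $\mQ:=(V_{10}\otimes\Oc)/\Uc\cong\Uc^\vee$ on $X$. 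Hence $T_{G(5,V_{10})}|_X\cong\Uc^\vee\otimes\mQ\cong\Uc^\vee\otimes\Uc^\vee\cong\Sym^2\Uc^\vee\oplus\wedge^2\Uc^\vee$, and the normal bundle sequence becomes an $SO(V_{10})$-equivariant sequence $0\to T_X\to\Sym^2\Uc^\vee\oplus\wedge^2\Uc^\vee\to\Sym^2\Uc^\vee\to0$. Since $\Sym^2\Uc^\vee=\Ec_{2\omega_1}$ and $\wedge^2\Uc^\vee=\Ec_{\omega_2}$ are non-isomorphic irreducible homogeneous bundles, there is no nonzero $SO(V_{10})$-equivariant map $\Ec_{\omega_2}\to\Ec_{2\omega_1}$ (Schur's lemma), so the surjection kills the summand $\wedge^2\Uc^\vee$ and restricts to an automorphism of $\Sym^2\Uc^\vee$; its kernel $T_X$ is therefore $\wedge^2\Uc^\vee$.

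Next I would produce the extension. Dualizing $0\to\Rc\to\Uc\to\Oc\to0$ from Lemma~\ref{lem:U4_and_U5} gives $0\to\Oc\to\Uc^\vee\to\Rc^\vee\to0$; taking second exterior powers, and using that the sub-bundle is a line bundle so that $\wedge^2\Oc=0$, produces $0\to\Rc^\vee\to\wedge^2\Uc^\vee\to\wedge^2\Rc^\vee\to0$. By the first paragraph this is a short exact sequence of the required shape with middle term $T_X$, and it is non-split because $T_X\cong\wedge^2\Uc^\vee=\Ec_{\omega_2}$ is irreducible homogeneous, hence stable (as argued for $\Uc$ in the proof of Lemma~\ref{lem:U4_and_U5}), hence indecomposable. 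For uniqueness it then suffices to know that $\Ext^1_X(\wedge^2\Rc^\vee,\Rc^\vee)\cong\CC$: granting this, any two non-split extensions of $\wedge^2\Rc^\vee$ by $\Rc^\vee$ differ by a nonzero scalar and in particular have isomorphic total spaces, so $T_X$ is the unique such extension up to isomorphism.

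I would establish $\Ext^1_X(\wedge^2\Rc^\vee,\Rc^\vee)\cong\CC$ by decomposing $\wedge^2\Rc\otimes\Rc^\vee$ into irreducible homogeneous bundles and computing their cohomology by the Borel--Bott--Weil theorem, as in Appendix~\ref{sec:Borel--Bott--Weil}; this is the one genuinely computational ingredient (together with keeping straight the dictionary between the tautological bundles in the $B_4$- and $D_5$-descriptions of $X$). The only other point requiring care is the identification of $T_{G(5,V_{10})}|_X\to\Sym^2\Uc^\vee$ with the projection away from the $\wedge^2\Uc^\vee$-factor, which is what forces the kernel to be exactly $\wedge^2\Uc^\vee$; this is handled above purely by equivariance and Schur's lemma, so I expect the cohomology computation to be the main obstacle.
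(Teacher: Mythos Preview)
Your argument is correct, but it differs from the paper's in how the extension \eqref{eq:tangent_bundle_as_an_extension} is produced. The paper works in the $B_4$ picture first: it realizes $X$ as the zero locus of a section of $\Sym^2\Rc^\vee$ inside $G(4,V_9)$, writes down the normal bundle sequence $0\to T_X\to\Rc^\vee\otimes\Wc\to\Sym^2\Rc^\vee\to0$, and then runs a snake-lemma diagram against $0\to\Rc^\vee\to\Rc^\vee\otimes\Uc^\vee\to\Rc^\vee\otimes\Rc^\vee\to0$ (obtained by tensoring the dual of \eqref{eq:sequence_of_tautologicals} with $\Rc^\vee$ and using $\Wc\cong\Uc^\vee$) to extract the sequence \eqref{eq:tangent_bundle_as_an_extension}. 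Only afterwards does the paper pass to the $D_5$ picture to identify $T_X$ with $\wedge^2\Uc^\vee$ via the normal bundle sequence in $G(5,V_{10})$, using uniqueness of the map $\Uc^\vee\otimes\Uc^\vee\to\Sym^2\Uc^\vee$. You reverse this order: you first establish $T_X\cong\wedge^2\Uc^\vee$ from the $G(5,V_{10})$ embedding (your Schur's-lemma justification is a clean equivariant version of the paper's ``unique map up to scalar''), and then obtain the extension in one line by applying $\wedge^2$ to the dual of \eqref{eq:sequence_of_tautologicals}. This is more economical: it bypasses the $B_4$ normal bundle sequence and the snake-lemma diagram entirely, and it makes the non-splitting immediate from irreducibility of $\Ec_{\omega_2}$ rather than relying on the $\Ext^1$ computation for that purpose. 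The uniqueness step and the underlying Borel--Bott--Weil computation of $\Ext^1(\wedge^2\Rc^\vee,\Rc^\vee)\cong\CC$ are the same in both approaches (this is Lemma~\ref{lem:cohomology_extension_U4_O} in the paper).
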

    \begin{proof}
        Since $ X $ is the zero locus of a general section of $\Sym^2\Rc^\vee$ in $G(4, V_9)$, it comes with a normal bundle sequence:
        \begin{equation*}
            0\arw T_{ X }\arw\Rc^\vee\otimes\Wc\arw \Sym^2\Rc^\vee\arw 0.
        \end{equation*}
        Together with the sequence \ref{eq:sequence_of_tautologicals}, this gives rise to a diagram:
        \begin{equation*}
            \begin{tikzcd}
                 \coker\phi&0  & 0\\
                 T_{ X }\ar[hook]{r}\ar[two heads]{u} & \Rc^\vee\otimes\Wc \ar[two heads]{r}\ar{u} & \Sym^2\Rc^\vee\ar{u}\\
                 \Rc^\vee\ar[hook]{r}\ar{u}{\phi} & \Rc^\vee\otimes\Uc^\vee \ar[two heads]{r}\ar[equals]{u}& \Rc^\vee \otimes \Rc^\vee\ar[two heads]{u}\\
                 \ker\phi\ar[hook]{u} & 0\ar{u} & \wedge^2\Rc^\vee\ar[hook]{u}
            \end{tikzcd}
        \end{equation*}
        from which the existence of the sequence \ref{eq:tangent_bundle_as_an_extension} follows by the snake lemma. Uniqueness is a consequence of the following computation:
        \begin{equation*}
            \begin{split}
                \Ext^1(\wedge^2\Rc^\vee, \Rc^\vee) & \simeq H^1( X ,  \wedge^2\Rc\otimes\Rc^\vee) \\
                & \simeq H^1( X ,  \wedge^2\Rc^\vee\otimes\Rc^\vee(-2))\\
                & \simeq H^1( X ,  \wedge^3\Rc^\vee(-2))\oplus H^1( X ,  \Ec_{\nu_1+\nu_2-2\nu_4}).
            \end{split}
        \end{equation*}
        Here by Lemma \ref{lem:cohomology_extension_U4_O} the first summand is isomorphic to $\CC$ and the second is trivial, and this proves that there is only one isomorphism class of extensions of $\Rc^\vee$ and $\wedge^2\Rc^\vee$, concluding the proof of the first claim.\\
        To prove the second statement, let us call $\Uc_{G(5, V_{10})}$ and $\Qc_{G(5, V_{10})}$ the tautological and quotient bundles of $G(5, V_{10})$. Observe that $\Qc_{G(5, V_{10})}$ restricts to $\Uc^\vee$ on $X$, and therefore $T_{G(5, V_{10})}|_X\simeq \Uc^\vee\otimes\Uc^\vee$. Then, observe that there is a unique map $\Uc^\vee\otimes\Uc^\vee\arw \Sym^2\Uc^\vee$ up to scalar multiple, and such a map is surjective with kernel $\wedge^2\Uc^\vee$. Since $\Nc_{X|G(5, V_{10})} \simeq \Sym^2\Uc^\vee$, we conclude that $T_X\simeq\wedge^2\Uc^\vee$ by the normal bundle sequence of $X\subset G(5, V_{10})$.
    \end{proof}
    Recall the \emph{affine tangent bundle} $ \wh T_{ X }(1)$, which is an extension of the form
    \begin{equation}\label{eq:affine_tangent_bundle}
        0\arw\Oc(-1)\arw  \wh T_{ X } \arw T_{ X }(-1)\arw 0.
    \end{equation}
    Such extension is unique because $h^1( X , T_{ X }) = 1$ ($ X $ is rigid).
    \begin{lemma}
    \label{lem:affine_tangent_bundle_as_a_kernel}
        There is a short exact sequence:
        \begin{equation}
        \label{eq:affine_tangent_bundle_as_a_kernel}
            0\arw \wh T_{ X } \arw \VV^{D_5}_{\omega_4}\otimes\Oc\arw \Uc(1)\arw 0.
        \end{equation}
    \end{lemma}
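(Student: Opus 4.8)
The plan is to obtain the sequence by restricting the Euler sequence of the ambient projective space of the spinor embedding to $X$ and then identifying the resulting cokernel with $\Uc(1)$ by a representation-theoretic computation.

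\emph{Step 1 (shape of the sequence).} Since $\omega_4$ is $G$-dominant, $\Oc(1)=\Ec_{\omega_4}$ is globally generated and by Borel--Bott--Weil it has a $16$-dimensional space of sections, so the spinor embedding realizes $X$ as the closed $\Spin(10)$-orbit in $\PP^{15}=\PP(\VV^{D_5}_{\omega_4})$. Pulling back the tautological inclusion $\Oc(-1)\hookrightarrow\VV^{D_5}_{\omega_4}\otimes\Oc$ and the Euler sequence $0\to\Oc\to\VV^{D_5}_{\omega_4}\otimes\Oc(1)\to T_{\PP^{15}}\to 0$ to $X$, the affine tangent bundle is characterized by the fact that $\widehat T_X(1)$ is the preimage of $T_X\subset T_{\PP^{15}}|_X$ inside $\VV^{D_5}_{\omega_4}\otimes\Oc_X(1)$; this matches the extension \eqref{eq:affine_tangent_bundle} after untwisting, compatibly with the normal bundle sequence $0\to T_X\to T_{\PP^{15}}|_X\to\Nc_{X|\PP^{15}}\to 0$. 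Hence we get $0\to\widehat T_X(1)\to\VV^{D_5}_{\omega_4}\otimes\Oc(1)\to\Nc_{X|\PP^{15}}\to 0$, and after twisting by $\Oc(-1)$:
\[
0\to\widehat T_X\to\VV^{D_5}_{\omega_4}\otimes\Oc\to\Nc_{X|\PP^{15}}(-1)\to 0.
\]

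\emph{Step 2 (identifying the cokernel).} It remains to prove $\Nc_{X|\PP^{15}}(-1)\cong\Uc(1)$. The cokernel above is a homogeneous quotient bundle of the trivial bundle $\VV^{D_5}_{\omega_4}\otimes\Oc$, hence is the irreducible homogeneous bundle attached to the bottom piece of the filtration of $\VV^{D_5}_{\omega_4}$ by the $\ZZ$-grading induced by the cocharacter defining $P^4$. Decomposing $\VV^{D_5}_{\omega_4}$ over the Levi $SL(5)\times\CC^*$ into its three graded summands (of dimensions $1,10,5$, which are $\wedge^0$, $\wedge^2$, $\wedge^4$ of the standard $5$-dimensional representation with suitable central twists) and reading off highest weights via \eqref{eq:bundles_to_weights_D5} and the Weyl-group formulas of Section \ref{subsubsec:D5_description}, the bottom summand is $\Ec_{\omega_5}$; and $\Ec_{\omega_5}\cong\Uc(1)$ because $\Uc=(\Uc^\vee)^\vee=\Ec_{-w_0^L\omega_1}=\Ec_{\omega_5-\omega_4}$ by Lemma \ref{lem:smirnov} and the Weyl computation, whence $\Uc(1)=\Uc\otimes\Oc(1)=\Ec_{\omega_5}$. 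Substituting into Step 1 yields \eqref{eq:affine_tangent_bundle_as_a_kernel}. Alternatively, $\Nc_{X|\PP^{15}}$ can be identified with $\Uc(2)$ using that $X$ is the zero locus of the ten pure-spinor quadrics, whose span is the vector representation $\VV^{D_5}_{\omega_1}\subset H^0(\Oc(2))$: the surjection $\VV^{D_5}_{\omega_1}\otimes\Oc(-2)\twoheadrightarrow\Nc^\vee_{X|\PP^{15}}$ kills the subbundle $\Uc(-2)$, hence factors through an isomorphism $\Uc^\vee(-2)\xrightarrow{\ \sim\ }\Nc^\vee_{X|\PP^{15}}$ (by comparison of ranks and first Chern classes), and $\wedge^4\Uc^\vee\cong\Uc\otimes\wedge^5\Uc^\vee\cong\Uc(2)$.

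The technical heart, and the main obstacle, is Step 2: making the Levi-decomposition of the half-spin representation together with the bookkeeping of the central $\CC^*$-grading fully explicit -- equivalently, tracking the twists hidden in $\Uc^\vee=\Ec_{\omega_1}$ and $\Oc(1)=\Ec_{\omega_4}$ -- which is precisely the type of Borel--Bott--Weil computation developed in Appendix \ref{sec:Borel--Bott--Weil}. If one prefers to bypass the Euler sequence and instead read the whole sequence off the $P^4$-grading of $\VV^{D_5}_{\omega_4}\otimes\Oc$, one must further check that the rank-$11$ subbundle so obtained is $\widehat T_X$ rather than the split extension $\Oc(-1)\oplus T_X(-1)$; this follows since $\VV^{D_5}_{\omega_4}|_{P^4}$ is non-split above its bottom graded piece, together with the uniqueness of the extension \eqref{eq:affine_tangent_bundle}.
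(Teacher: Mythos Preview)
Your argument is correct and assembles the same three ingredients as the paper: the restricted Euler sequence of $\PP^{15}=\PP(\VV^{D_5}_{\omega_4})$, the normal bundle sequence of $X\subset\PP^{15}$, and the uniqueness of the extension \eqref{eq:affine_tangent_bundle}. The difference is purely one of direction. The paper first checks, via Borel--Bott--Weil, that $\Uc(1)$ is globally generated with $H^0\cong\VV^{D_5}_{\omega_4}$, defines the kernel $\Fc_{11}$ of the evaluation map, and then runs the snake lemma on the diagram whose rows are the twisted evaluation sequence and the normal bundle sequence (with $\Nc_{X|\PP^{15}}$ written directly as $\Uc(2)$) and whose middle column is the Euler sequence; this produces $0\to\Oc\to\Fc_{11}(1)\to T_X\to 0$, whence $\Fc_{11}\cong\wh T_X$ by uniqueness. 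You instead start from the affine tangent bundle, read off the short exact sequence with cokernel $\Nc_{X|\PP^{15}}(-1)$, and then identify that cokernel with $\Uc(1)$.

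What your route buys is that Step~2 makes explicit the one point the paper leaves unjustified, namely $\Nc_{X|\PP^{15}}\cong\Uc(2)$; the paper simply writes $\Uc(2)$ in the normal-bundle slot of its diagram without comment. Your two sketches for this (the Levi filtration of the half-spin representation, or the ten defining quadrics spanning $\VV^{D_5}_{\omega_1}$) are both valid, though neither is carried to the last detail. Conversely, the paper's route has the virtue of never needing to know $\Nc_{X|\PP^{15}}$ in advance: once one verifies that the evaluation map $\VV^{D_5}_{\omega_4}\otimes\Oc(1)\twoheadrightarrow\Uc(2)$ kills the Euler line $\Oc\hookrightarrow\VV^{D_5}_{\omega_4}\otimes\Oc(1)$, the snake lemma identifies the induced quotient of $T_{\PP^{15}}|_X$ with $\Uc(2)$ and its kernel with $T_X$ simultaneously. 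In practice the two arguments are dual rearrangements of the same $3\times 3$ diagram.
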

    \begin{proof}
        By a simple calculation, it is easy to see that $\Uc(1)$ is globally generated. In fact, one has $\Uc(1)\simeq \wedge^4\Uc^\vee(1)$, and since $\wedge^4\Uc^\vee = \Ec_{\omega_4+\omega_5}$ and $\Oc(-1) = \Ec_{-\omega_5}$ we have $\Uc(1)\simeq\Ec_{\omega_4}$. Now, $\omega_4$ is a dominant weight and therefore $\Uc(1)$ is globally generated, with $H^0( X , \Uc(1)) = \VV^{D_5}_{\omega_4}\simeq \CC^{16}$. Let us define $\Fc_{11}$ as the kernel of the evaluation map on sections of $\Uc(1)$, i.e.
        \begin{equation}\label{eq:F11}
            \begin{tikzcd}
                0\ar{r} &    \Fc_{11}\ar{r} & H^0( X , \Uc(1))\otimes\Oc\ar{r}{\operatorname{ev}} & \Uc(1)\ar{r} & 0.
            \end{tikzcd}
        \end{equation}
        The rest of the proof consists in showing that $\wh T_{ X }\simeq \Fc_{11}(1)$.
        In the following diagram, the horizontal rows are respectively the sequence \ref{eq:F11} and the normal bundle sequence of $X\subset\PP^{15}$, while the central vertical row is the restriction to $X$ of the Euler sequence of the ambient $\PP^{15}$:
        \begin{equation*}
            \begin{tikzcd}
                 \ker\phi\ar[hook]{d}&\Oc\ar[hook]{d} & 0\ar{d} \\
                 \Fc_{11}(1)\ar[hook]{r}\ar{d}{\phi} & \VV^{D_5}_{\omega_4}\otimes\Oc(1) \ar[two heads]{r}\ar[two heads]{d} & \Uc(2)\ar[equals]{d}\\
                 T_{ X }\ar[hook]{r}\ar{d} & T_{\PP^{15}}|_{ X }\ar[two heads]{r}\ar{d} & \Uc(2)\ar{d}\\
                 \coker\phi & 0 & 0
            \end{tikzcd}
        \end{equation*}
        By the snake lemma we deduce that $\phi$ is surjective with kernel $\Oc$. Recall that extensions of the type described in the sequence \ref{eq:affine_tangent_bundle} are unique up to isomorphism: thus, $\wh T_{ X }\simeq \Fc_{11}(1)$.
    \end{proof}
    \begin{lemma}\label{lem:big_sequence}
        There is the following exact sequence:
        \begin{equation*}
            0\arw\Uc^\vee\arw \VV^{D_5}_{\omega_4}\otimes\Oc(1)\arw \VV^{D_5}_{\omega_1}\otimes\Uc(2)\arw \VV^{D_5}_{2\omega_1}\otimes\Oc(2)\arw \Sym^2\Uc^\vee(2)\arw 0
        \end{equation*}
    \end{lemma}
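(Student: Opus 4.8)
The plan is to assemble the sequence by splicing together three exact sequences: a presentation of $\Sym^2\Uc^\vee$ by trivial bundles coming from global generation, the symmetric square of the tautological sequence \ref{eq:tautological_sequence_rank_5}, and the dual of Lemma \ref{lem:affine_tangent_bundle_as_a_kernel}. Throughout I use $V_{10}\simeq\VV^{D_5}_{\omega_1}$ and $\Sym^2 V_{10}\simeq\VV^{D_5}_{2\omega_1}\oplus\CC$, the trivial summand being spanned by the quadratic form.

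\emph{The right-hand end.} The bundle $\Sym^2\Uc^\vee=\Ec_{2\omega_1}$ is globally generated (the weight $2\omega_1$ is dominant), with $H^0(X,\Sym^2\Uc^\vee)\simeq\VV^{D_5}_{2\omega_1}$ by Borel--Bott--Weil. Since $\Uc$ is isotropic, the evaluation map $\Sym^2 V_{10}\otimes\Oc\arw\Sym^2\Uc^\vee$ annihilates the line spanned by the quadratic form, hence factors through a surjection $\VV^{D_5}_{2\omega_1}\otimes\Oc\arw\Sym^2\Uc^\vee$. On the other hand, applying $\Sym^2$ to \ref{eq:tautological_sequence_rank_5} produces the standard exact four-term sequence
\begin{equation*}
    0\arw\wedge^2\Uc\arw \VV^{D_5}_{\omega_1}\otimes\Uc\arw\Sym^2 V_{10}\otimes\Oc\arw\Sym^2\Uc^\vee\arw 0 ,
\end{equation*}
and by exactness the quadratic form already lies in the image of the middle map $\psi$. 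Composing $\psi$ with the quotient $\Sym^2 V_{10}\otimes\Oc\arw\VV^{D_5}_{2\omega_1}\otimes\Oc$ and passing to kernels then yields an exact sequence
\begin{equation*}
    0\arw\Gc\arw \VV^{D_5}_{\omega_1}\otimes\Uc\arw\VV^{D_5}_{2\omega_1}\otimes\Oc\arw\Sym^2\Uc^\vee\arw 0 ,
\end{equation*}
where $\Gc$, the preimage under $\psi$ of the quadratic-form line, sits in a short exact sequence $0\arw\wedge^2\Uc\arw\Gc\arw\Oc\arw 0$.

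\emph{Identifying $\Gc$.} This is the heart of the argument. Since $\Uc$ has no cohomology (Lemma \ref{lem:cohomology_U5}) one has $\Hom(\Oc,\VV^{D_5}_{\omega_1}\otimes\Uc)=0$, hence $\Hom(\Oc,\Gc)=0$, so the extension defining $\Gc$ is non-split. By Lemma \ref{lem:tangent_bundle_as_an_extension} we have $\wedge^2\Uc\simeq\Omega^1_X$, the group $\Ext^1(\Oc,\Omega^1_X)\simeq H^1(X,\Omega^1_X)$ is one-dimensional (as $X$ has Picard rank one), and dualizing the affine tangent sequence \ref{eq:affine_tangent_bundle} and untwisting realizes $\wh T_X^\vee(-1)$ as a non-split extension of $\Oc$ by $\Omega^1_X$ as well. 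Therefore $\Gc\simeq\wh T_X^\vee(-1)$, and twisting the previous display by $\Oc(2)$ gives
\begin{equation*}
    0\arw\wh T_X^\vee(1)\arw \VV^{D_5}_{\omega_1}\otimes\Uc(2)\arw\VV^{D_5}_{2\omega_1}\otimes\Oc(2)\arw\Sym^2\Uc^\vee(2)\arw 0 .
\end{equation*}

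\emph{The left-hand end and the splice.} Dualizing Lemma \ref{lem:affine_tangent_bundle_as_a_kernel} and twisting by $\Oc(1)$ produces a short exact sequence
\begin{equation*}
    0\arw\Uc^\vee\arw \VV^{D_5}_{\omega_4}\otimes\Oc(1)\arw\wh T_X^\vee(1)\arw 0
\end{equation*}
(matching the half-spin representation with the one in Lemma \ref{lem:affine_tangent_bundle_as_a_kernel}, keeping in mind that the two half-spin representations of $D_5$ are dual to each other). Splicing this into the four-term sequence along the common term $\wh T_X^\vee(1)$ gives exactly the claimed five-term exact sequence. The single step I expect to require care is the identification $\Gc\simeq\wh T_X^\vee(-1)$, which is what ties the symmetric-square construction to the affine tangent bundle and rests on $h^{1,1}(X)=1$ together with the non-splitness of both extensions; everything else is formal, granted the Borel--Bott--Weil input collected in Appendix \ref{sec:Borel--Bott--Weil} (global generation of $\Sym^2\Uc^\vee$ with the stated space of sections, and $H^\bullet(X,\Uc)=0$).
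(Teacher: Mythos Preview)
Your proof is correct and follows the same overall plan as the paper: obtain the four-term sequence
\[
0\arw\wh T_X^\vee(-1)\arw \VV^{D_5}_{\omega_1}\otimes\Uc\arw \VV^{D_5}_{2\omega_1}\otimes\Oc\arw \Sym^2\Uc^\vee\arw 0
\]
from the symmetric square of the tautological sequence by removing the quadratic-form line, and then splice with the dual of Lemma~\ref{lem:affine_tangent_bundle_as_a_kernel}. Your identification of $\Gc$ with $\wh T_X^\vee(-1)$ via the uniqueness of the non-split extension of $\Oc$ by $\wedge^2\Uc\simeq\Omega^1_X$ (using $h^{1,1}(X)=1$) is precisely what the paper's diagram chase accomplishes, stated more transparently.

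Where you genuinely diverge is the final splice. The paper does not splice directly: instead it computes $\Ext^\bullet(\Sym^2\Uc^\vee(2),\Uc^\vee)=\CC[-3]$ (Lemma~\ref{lem:ext_c3}) and argues that there is, up to isomorphism, a unique nontrivial three-term extension, so the two five-term sequences built from either end must coincide. Your approach bypasses this entirely: once $\Gc\simeq\wh T_X^\vee(-1)$ is established, the Yoneda splice with the dualized short exact sequence of Lemma~\ref{lem:affine_tangent_bundle_as_a_kernel} gives the result immediately, with no need for the Ext computations of Lemmas~\ref{lem:ext_c3} and~\ref{lem:ugly_ext}. This is both shorter and more elementary. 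You are also right to flag the half-spin duality: dualizing Lemma~\ref{lem:affine_tangent_bundle_as_a_kernel} literally produces $(\VV^{D_5}_{\omega_4})^\vee\simeq\VV^{D_5}_{\omega_5}$, so the label $\omega_4$ in the statement is to be read up to this identification.
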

    \begin{proof}
        First, let us prove the existence of a sequence
        \begin{equation}\label{eq:partial_big_sequence}
            0\arw\wh T_X^\vee(-1)\arw \VV^{D_5}_{\omega_1}\otimes\Uc\arw \VV^{D_5}_{2\omega_1}\otimes\Oc\arw \Sym^2\Uc^\vee\arw 0.
        \end{equation}
        To do so, we start by applying the $\Sym^2$-Schur functor to the tautological exact sequence of $ X $ (as a $D_5$-Grassmannian), obtaining:
        \begin{equation*}
            0\arw\wedge^2\Uc\arw \VV^{D_5}_{\omega_1}\otimes\Uc\arw \Sym^2 \VV^{D_5}_{\omega_1}\otimes\Oc\arw \Sym^2\Uc^\vee\arw 0.
        \end{equation*}
        Note that $\Sym^2 \VV^{D_5}_{\omega_1} \simeq \VV^{D_5}_{2\omega_1}\oplus\CC$. Hence we write the following diagram, where rows and columns are exact, $\Kc$ and $\wt\Kc$ denote the kernels of the two rightmost horizontal maps and $\Cc$ is the cokernel of the central map of the second row:
        \begin{equation}\label{eq:big_big_diagram}
            \begin{tikzcd}
                & & \Kc\ar[hook]{rd} &\Oc\ar[hook]{d} & \\
                \wedge^2\Uc\ar[hook]{r}\ar[hook]{d} &  \VV^{D_5}_{\omega_1}\otimes\Uc\ar{rr}\ar[two heads]{ur}\ar[equals]{d} & & \VV^{D_5}_{2\omega_1}\otimes\Oc\oplus\Oc \ar[two heads]{r}\ar[two heads]{d} &  \Sym^2\Uc^\vee \\
                \wh T_X^\vee(-1) \ar[hook, dashed]{r}{\exists} \ar[two heads]{d} & \VV^{D_5}_{\omega_1}\otimes\Uc\ar{rr}\ar[two heads]{dr} & & \VV^{D_5}_{2\omega_1}\otimes\Oc \ar[two heads]{r} & \Cc \\
                \Oc & & \wt\Kc\ar[hook]{ru} & & \\
            \end{tikzcd}
        \end{equation}
        The dashed arrow, which makes the rightmost square commutative, exists because if we apply the functor $\Hom(-, \VV^{D_5}_{\omega_1}\otimes\Uc)$ to the leftmost column we obtain a long exact sequence of $\Ext$-groups, and since $\Ext^\bullet(\Oc, \Uc) = 0$ we find an isomorphism $\Hom(\wh T_X^\vee(-1), \VV^{D_5}_{\omega_1}\otimes\Uc) \simeq \Hom(\wedge^2\Uc, \VV^{D_5}_{\omega_1}\otimes\Uc)$. Then we can apply the snake lemma to the two leftmost vertical exact sequences, and this yields a short exact sequence
        \begin{equation*}
            0\arw \Oc\arw \Kc\arw \wt\Kc \arw 0.
        \end{equation*}
        Hence, we can write the following diagram:
        \begin{equation*}
            \begin{tikzcd}
                \Oc \ar[hook]{r}\ar[equals]{d}& \Kc\ar[two heads]{r} \ar[hook]{d} & \wt\Kc \ar[hook]{d} \\
                \Oc \ar[hook]{r}& \VV^{D_5}_{2\omega_1}\otimes\Oc\oplus\Oc \ar[two heads]{r} \ar[two heads]{d} & \VV^{D_5}_{2\omega_1}\otimes\Oc \ar[two heads]{d} \\
                 & \Sym^2\Uc^\vee & \Cc
            \end{tikzcd}
        \end{equation*}
        By the snake lemma we conclude that $\Cc\simeq \Sym^2\Uc^\vee$, which gives us the sequence \ref{eq:partial_big_sequence}.\\
        By Lemma \ref{lem:ugly_ext} one has $\Ext(\Sym^2\Uc^\vee,    \wh T_{ X }^\vee(-1)) = \CC[-2]$: hence, there exists a unique two-terms extension of the form
        \begin{equation*}
            0\arw    \wh T_{ X }^\vee(1)\arw B\arw C\arw \Sym^2\Uc^\vee(2) \arw 0.
        \end{equation*}
        On the other hand, we see that $\Ext(\wh T_X^\vee(1),\Uc^\vee) = \CC[-1]$: this follows by the fact that by twisting the sequence \ref{eq:affine_tangent_bundle_as_a_kernel} we obtain:
        \begin{equation*}
            0\arw \Uc^\vee\otimes \wh T_X(-1)\arw \VV^{D_5}_{\omega_1}\otimes\Uc^\vee(-1)\arw\Uc^\vee\otimes\Uc\arw 0,
        \end{equation*}
        where the middle term has no cohomology by Lemma \ref{lem:coh_general_D5} and the last one contributes with $\CC[0]$ because $\Uc$ is exceptional. Thus, there is a unique $A$, up to isomorphism, fitting in the following exact sequence:
        \begin{equation*}
            0\arw \Uc^\vee\arw A \arw \wh T_X^\vee(1) \arw 0.
        \end{equation*}
        By composing the last two equations, respectively, with appropriate twists of the duals of \ref{eq:F11} and \ref{eq:affine_tangent_bundle}, we find two long exact sequences:
        \begin{equation}\label{eq:three_terms_extension_1}
            0\arw \Uc^\vee\arw \VV^{D_5}_{\omega_4}\otimes\Oc(1)\arw B\arw C\arw \Sym^2\Uc^\vee(2) \arw 0.
        \end{equation}
        \begin{equation}\label{eq:three_terms_extension_2}
            0\arw\Uc^\vee\arw A \arw \VV^{D_5}_{\omega_1}\otimes\Uc(2)\arw \VV^{D_5}_{2\omega_1}\otimes\Oc(2)\arw \Sym^2\Uc^\vee(2)\arw 0
        \end{equation}
        Now, by Lemma \ref{lem:ext_c3} there is a unique three-terms extension between $\Uc^\vee$ and $\Sym^2\Uc^\vee(2)$ which is not a direct sum of shorter extensions, and therefore the sequences \ref{eq:three_terms_extension_1} and \ref{eq:three_terms_extension_2} must coincide up to isomorphisms, and this concludes the proof.
    \end{proof}
    
    \section{The KP collection on the spinor tenfold}\label{sec:Constructing_KP_collection}
    Let $G$ be a simple Lie group of type $B_4$. By $\dbcoh_G(X)$ we denote the equivariant category of coherent sheaves on the spinor tenfold $X$. We will use the notation\footnote{Note that this choice of notation differs from the one of Kuznetsov and Polishchuk.} $\Ec_\lambda$ for both the homogeneous vector bundle $G\times^P \VV^P_\lambda$ in the category $\dbcoh_G(X)$ of equivariant sheaves, and its image through the forgetful functor $\forgetful : \dbcoh_G(X)\arw \dbcoh(X)$. 
    We also fix the shorthand notation:
    \begin{equation*}
        \begin{split}
            \Ext_G^\bullet(-,-)& :=\Ext_{\dbcoh_G(X)}^\bullet(-,-)\\
            \Ext^\bullet(-,-) & := \Ext^\bullet_{\dbcoh(X)}(-,-).
        \end{split}
    \end{equation*}
    As usual, given a vector space $W$ with a $G$-action, we call $W^G\subset W$ the space of $G$-invariants.\\
    By the property $\Ext_G^\bullet (\Ec_\lambda, \Ec_\mu) = \Ext^\bullet(\Ec_\lambda, \Ec_\mu)^G$ \cite[Proposition 2.17]{KP_collections_isotropic}, one has the remarkable property that every homogeneous, irreducible vector bundle is exceptional in $\dbcoh_G(X)$. For the standard definitions of semiorthogonal decomposition, exceptional object and full exceptional collection, we refer to \cite[Section 2]{kuznetsov_derived_category_view}.
    \subsection{Mutations of exceptional objects}\label{subsec:mutations}
    Let $\Tc$ be a triangulated category, and $\Ac_1, \dots, \Ac_n$ admissible subcategories such that $\Tc = \langle\Ac_1, \dots, \Ac_n\rangle$ is a semiorthogonal decomposition. Mutations are a well-established technique for producing other semiorthogonal decompositions from a given one (see \cite{bondal_associative_algebras, bondalorlov}). The setting of the problem we address in this paper is much more specific: let $Y$ be a smooth projective variety and $\dbcoh(Y) = \langle E_1, \dots, E_n\rangle$ a full exceptional collection of vector bundles. Then, all the techniques we need to recall about mutations can be summarized in the following two lemmas, which are well-known to experts.
    \begin{lemma}\label{lem:mutations_are_cones}
        Let $Y$ be a smooth projective variety and $\dbcoh(Y) = \langle E_1, \dots, E_n\rangle$ a full exceptional collection. Then, one has:
        \begin{equation*}
            \begin{split}
                \dbcoh(Y) & = \langle E_1, \dots, E_{i-1}, E_{i+1}, \RR_{E_{i+1}}E_i, E_{i+2}, \dots, E_n\rangle \\
                & = \langle E_1, \dots, E_{i-1}, \LL_{E_i}E_{i+1}, E_i, E_{i+2}, \dots, E_n\rangle
            \end{split}
        \end{equation*}
        where:
        \begin{equation*}
            \begin{split}
                \RR_{E_{i+1}}E_i & = \cone(E_i\arw E_{i+1}\otimes\Ext_\Tc ^\bullet(E_i, E_{i+1})^\vee)[-1]\\
                \LL_{E_i}E_{i+1} & = \cone(E_i\otimes\Ext_\Tc ^\bullet(E_i, E_{i+1})\arw E_{i+1})
            \end{split}
        \end{equation*}
        with the morphisms being respectively the evaluation and coevaluation maps.
    \end{lemma}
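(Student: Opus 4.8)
The plan is to isolate the exceptional pair $\langle E_i, E_{i+1}\rangle$ inside the collection, carry out the mutation locally, and then reinsert the result into the ambient decomposition. Concretely, I would set $\Ac := \langle E_1, \dots, E_{i-1}\rangle$, $\Dc := \langle E_i, E_{i+1}\rangle$ and $\Bc := \langle E_{i+2}, \dots, E_n\rangle$; each of these is an admissible subcategory of $\dbcoh(Y)$ since it is generated by an exceptional collection, and the $\Ext$-vanishings defining the collection give the semiorthogonal decomposition $\dbcoh(Y) = \langle \Ac, \Dc, \Bc\rangle$. Because a semiorthogonal decomposition may be refined on any one of its factors, it then suffices to prove the two identities of subcategories
\begin{equation*}
    \Dc = \langle E_{i+1}, \RR_{E_{i+1}}E_i\rangle = \langle \LL_{E_i}E_{i+1}, E_i\rangle
\end{equation*}
together with exceptionality of the two mutated objects: reinserting these refinements into $\langle \Ac, \Dc, \Bc\rangle$ yields the two displayed full exceptional collections, the remaining $\Ext$-vanishings against $\Ac$ and $\Bc$ being automatic, since every object of $\Dc$ is obtained from $E_i$ and $E_{i+1}$ by cones, shifts and summands, while $\Ext^\bullet(E_i, E_j) = \Ext^\bullet(E_{i+1}, E_j) = 0$ for $j<i$ and $\Ext^\bullet(E_k, E_i) = \Ext^\bullet(E_k, E_{i+1}) = 0$ for $k>i+1$.

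For the local statement about the right mutation I would set $V := \Ext^\bullet(E_i, E_{i+1})$ and work with the defining triangle $\RR_{E_{i+1}}E_i \arw E_i \arw E_{i+1}\otimes V^\vee \arw \RR_{E_{i+1}}E_i[1]$, whose second arrow is the coevaluation map. Applying $\Ext^\bullet(-, E_{i+1})$ and using $\Ext^\bullet(E_{i+1}\otimes V^\vee, E_{i+1}) \cong V$, with the induced map being the identity, gives $\Ext^\bullet(\RR_{E_{i+1}}E_i, E_{i+1}) = 0$, i.e. the pair $\langle E_{i+1}, \RR_{E_{i+1}}E_i\rangle$ is semiorthogonal. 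Applying $\Ext^\bullet(-, E_i)$ and using $\Ext^\bullet(E_{i+1}, E_i) = 0$ gives $\Ext^\bullet(\RR_{E_{i+1}}E_i, E_i) \cong \CC$; feeding this into $\Ext^\bullet(\RR_{E_{i+1}}E_i, -)$ applied to the same triangle then yields $\Ext^\bullet(\RR_{E_{i+1}}E_i, \RR_{E_{i+1}}E_i) \cong \CC$, so $\RR_{E_{i+1}}E_i$ is exceptional. Finally $E_i$ lies in the triangulated subcategory generated by $E_{i+1}$ and $\RR_{E_{i+1}}E_i$ by the very triangle above, so $\Dc \subseteq \langle E_{i+1}, \RR_{E_{i+1}}E_i\rangle$ and the reverse inclusion is evident, giving equality. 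The case of $\LL_{E_i}E_{i+1} = \cone(E_i\otimes V \arw E_{i+1})$, built from the evaluation map, is entirely symmetric: one applies $\Ext^\bullet(E_i, -)$, $\Ext^\bullet(E_{i+1}, -)$ and $\Ext^\bullet(-, \LL_{E_i}E_{i+1})$ to its defining triangle.

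All of this consists of routine long-exact-sequence chases, so I expect no serious obstacle; the only points needing genuine care are the graded bookkeeping in the objects $E_{i+1}\otimes V^\vee$ and $E_i\otimes V$ (finite direct sums of shifts of $E_{i+1}$, resp. $E_i$, weighted by the graded pieces of $V$) and the verification that the relevant connecting maps in those sequences are the identity or zero, which is precisely where the choice of (co)evaluation maps is used. Alternatively, the whole argument can be subsumed into the standard fact that mutation through an admissible subcategory is an equivalence of triangulated categories, which would shorten the write-up at the price of invoking more machinery.
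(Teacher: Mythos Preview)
Your argument is correct and follows the standard route to this well-known fact. Note, however, that the paper does not actually prove this lemma: it introduces it as one of two results ``well-known to experts'' and states it without proof, so there is no paper proof to compare against. Your sketch is a perfectly good justification and could serve as the omitted proof.
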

    \begin{lemma}\label{lem:mutations_recipe}
        Consider a full exceptional collection $\dbcoh(Y) = \langle E_1, \dots, E_n\rangle$ for $Y$ smooth projective. Then, up to a shift, the following holds:
        \begin{enumerate}
            \item If there is a short exact sequence $0\arw F\arw V\otimes E_1 \arw E_2\arw 0$, and $\Ext_{\dbcoh(Y)}^\bullet(E_1, E_2) = V[0]$, then $\LL_{E_1}E_2 \simeq F$.
            \item If there is a short exact sequence $0\arw E_1 \arw V\otimes E_2\arw F \arw 0$, and $\Ext_{\dbcoh(Y)}^\bullet(E_1, E_2) = V[0]$, then $\RR_{E_2}E_1 \simeq F$.
            \item If there is a short exact sequence $0\arw E_2 \arw F \arw E_1\arw 0$, and $\Ext_{\dbcoh(Y)}^\bullet(E_1, E_2) = \CC[-1]$, then $\RR_{E_2}E_1 \simeq \LL_{E_1}E_2 \simeq F$.
        \end{enumerate}
    \end{lemma}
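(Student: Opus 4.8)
The plan is to reduce all three parts directly to Lemma~\ref{lem:mutations_are_cones}, by identifying the relevant cone with the outer term of the given short exact sequence, after first checking that the morphism occurring in that sequence is --- up to an automorphism of the multiplicity space --- the canonical (co)evaluation map used to define the mutation.

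For (1): by Lemma~\ref{lem:mutations_are_cones} and the hypothesis $\Ext^\bullet_{\dbcoh(Y)}(E_1,E_2) = V[0]$ we have $\LL_{E_1}E_2 = \cone\bigl(V\otimes E_1\xrightarrow{\mathrm{ev}}E_2\bigr)$, where $\mathrm{ev}$ is the coevaluation morphism. The short exact sequence $0\arw F\arw V\otimes E_1\xrightarrow{\phi}E_2\arw 0$ gives a distinguished triangle $V\otimes E_1\xrightarrow{\phi}E_2\arw F[1]\xrightarrow{+1}$, so it suffices to show that $\phi$ and $\mathrm{ev}$ differ by an automorphism of $V\otimes E_1$. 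Since $E_1$ is exceptional, $\Hom(V\otimes E_1, E_2)\cong\Hom\bigl(V,\Hom(E_1,E_2)\bigr)=\End(V)$, and $\phi$ is classified by some $g$ in this space; as $\phi$ is surjective and $\dim V = \dim\Hom(E_1,E_2)$, the endomorphism $g$ must be invertible, whence $\phi = \mathrm{ev}\circ(g\otimes\mathrm{id}_{E_1})$ with $g\in\mathrm{GL}(V)$. Therefore $\LL_{E_1}E_2 = \cone(\phi)\simeq F[1]$, i.e.\ $F$ up to shift.

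For (2): dually, Lemma~\ref{lem:mutations_are_cones} gives $\RR_{E_2}E_1 = \cone\bigl(E_1\xrightarrow{\mathrm{coev}}E_2\otimes V^\vee\bigr)[-1]$; the sequence $0\arw E_1\xrightarrow{\phi}V\otimes E_2\arw F\arw 0$ gives a triangle $E_1\xrightarrow{\phi}V\otimes E_2\arw F\xrightarrow{+1}$, and now $E_2$ being exceptional yields $\Hom(E_1,V\otimes E_2)\cong\End(V)$, so the same argument shows that $\phi$ realises $\mathrm{coev}$ up to an automorphism of the multiplicity space; hence $\RR_{E_2}E_1\simeq\cone(\phi)[-1] = F[-1]$, i.e.\ $F$ up to shift. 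For (3): here $\Ext^\bullet_{\dbcoh(Y)}(E_1,E_2)=\CC[-1]$, so the extension $0\arw E_2\arw F\arw E_1\arw 0$ is classified by a class $e\in\Ext^1_{\dbcoh(Y)}(E_1,E_2)=\Hom_{\dbcoh(Y)}(E_1[-1],E_2)$, which we take to be a generator --- there is nothing to normalise in a one-dimensional space, the split case being degenerate. The associated triangle $E_2\arw F\arw E_1\xrightarrow{e}E_2[1]$, rotated to $E_1[-1]\xrightarrow{e}E_2\arw F\xrightarrow{+1}$, identifies $F$ with $\cone\bigl(E_1\otimes\Ext^\bullet_{\dbcoh(Y)}(E_1,E_2)\arw E_2\bigr) = \LL_{E_1}E_2$; rotating once more to $E_1\xrightarrow{e}E_2[1]\arw F[1]\xrightarrow{+1}$ and applying $[-1]$ identifies $F$ with $\cone\bigl(E_1\arw E_2\otimes\Ext^\bullet_{\dbcoh(Y)}(E_1,E_2)^\vee\bigr)[-1] = \RR_{E_2}E_1$.

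The only genuinely delicate point is the normalisation step in (1) and (2): one must know that the map in the short exact sequence agrees, up to an automorphism of the multiplicity space, with the canonical (co)evaluation from the definition of the mutation. This is precisely where the hypothesis on $\Ext^\bullet(E_1,E_2)$ is used --- to pin down that multiplicity space and, together with surjectivity of the map, to force the classifying endomorphism to be invertible. Everything else is a formal rotation of triangles. (In all the applications in Section~\ref{sec:fullness} the relevant maps are the tautological evaluation maps, so this check is immediate there.)
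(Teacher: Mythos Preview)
The paper states this lemma without proof, citing it as ``well-known to experts'', so there is no argument in the paper to compare against. Your reduction to Lemma~\ref{lem:mutations_are_cones} via identification of cones is the standard route, and part (3) is handled correctly (modulo the non-split caveat you already flag).

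There is, however, a genuine gap in your normalisation step for (1) and (2). You assert that surjectivity of $\phi:V\otimes E_1\to E_2$ forces the classifying endomorphism $g\in\End(V)$ to be invertible, but this fails in general. On $Y=\PP^1\times\PP^1$ take the exceptional pair $E_1=\Oc$, $E_2=\Oc(1,1)$, so that $V=H^0(\Oc(1,1))\simeq\CC^4$; the three sections $x_0y_0,\ x_1y_1,\ x_0y_1+x_1y_0$ have no common zero, hence any $g$ with this three-dimensional image gives a surjective $\phi_g$ with $g$ non-invertible. In that situation the kernel $F$ is \emph{not} a shift of $\LL_{E_1}E_2$: applying $\Hom(E_1,-)$ to the defining triangle one finds $\Hom(E_1,F)\simeq\ker g$ and $\Ext^1(E_1,F)\simeq\coker g$, so $F\notin\langle E_1\rangle^\perp$ unless $g$ is invertible. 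Thus the lemma, read literally for an arbitrary surjection, is actually false; the intended hypothesis is that the map be the canonical (co)evaluation, or equivalently that the induced $V\to\Hom(E_1,E_2)$ be an isomorphism. You already observe that every application in Section~\ref{sec:fullness} uses the tautological maps, so nothing downstream is affected --- but your attempted proof of the stronger literal statement does not go through, and no argument from surjectivity alone can rescue it.
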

    We also recall, in our setting, the definition of a right dual exceptional collection (as given in \cite{KP_collections_isotropic}).
    \begin{definition}\label{def:right_dual}
        Given $\dbcoh(Y) = \langle E_1, \dots, E_n\rangle$ as above, we define the associated $\emph{right dual}$ collection as follows:
        \begin{equation*}
            \langle E_n, \RR_{E_n}E_{n-1}, \dots, \RR_{\
            \langle E_2,\dots, E_n\rangle}E_1 \rangle.
        \end{equation*}
    \end{definition}
    \subsection{The forgetful functor and the exceptional blocks}
    Let us apply the prescription of \cite[Theorem 9.3]{KP_collections_isotropic} to $G/P =  X $ to produce a list of subcategories $\Ac_0, \dots, \Ac_7$ of $\dbcoh_G(X)$, each generated by exceptional objects. We find:
    \begin{equation*}
        \begin{split}
            \Ac_0 &= \langle\Oc\rangle_G \\
            \Ac_1 &= \langle\Oc(1)\rangle_G \\
            \Ac_2 &= \langle\Sym^2\Rc^\vee(2), \Rc^\vee(2), \Oc(2)\rangle_G \\
            \Ac_3 &= \langle\Sym^2\Rc^\vee(3), \Rc^\vee(3), \Oc(3)\rangle_G \\
            \Ac_4 &= \langle\wedge^2\Rc^\vee(4), \Rc^\vee(4), \Oc(4)\rangle_G \\
            \Ac_5 &= \langle\wedge^2\Rc^\vee(5), \Rc^\vee(5), \Oc(5)\rangle_G \\
            \Ac_6 &= \langle\Oc(6)\rangle_G \\
            \Ac_7 &= \langle\Oc(7)\rangle_G \\
        \end{split}
    \end{equation*}
    where $\langle \quad\rangle_G$ represents an exceptional collection in $\dbcoh_G( X )$. Note that a na\"ive application of the forgetful functor on each generator of the $\Ac_i$'s does \emph{not} produce an exceptional collection in $\dbcoh( X )$. 
    According to \cite[Section 3.3]{KP_collections_isotropic}, a possible way to produce an exceptional collection for $X$ is the following: first we take the right dual $\Bc_i$ of the block $\Ac_i$, and we apply the forgetful functor to each generator of each block $\Bc_i$. Finally, we consider the collection $\langle \forgetful\Bc_0, \dots, \forgetful\Bc_7\rangle$.
    This task is carried out in the following of this section.
    \begin{lemma}
        There is the following equivalence of categories:
        \begin{equation*}
            \forgetful \langle\wedge^2\Rc^\vee, \Rc^\vee, \Oc\rangle_G \simeq \langle \Oc, \Uc^\vee,    \wh T_{ X }(1) \rangle.
        \end{equation*}
    \end{lemma}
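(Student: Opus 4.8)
The plan is to recognize the statement as the output of the Kuznetsov--Polishchuk recipe recalled above: one must compute the right dual collection (Definition \ref{def:right_dual}) $\Bc$ of the block $\Ac=\langle\wedge^2\Rc^\vee,\Rc^\vee,\Oc\rangle_G$ inside $\dbcoh_G(X)$ and then apply the forgetful functor. Setting $E_1=\wedge^2\Rc^\vee$, $E_2=\Rc^\vee$, $E_3=\Oc$, Definition \ref{def:right_dual} gives $\Bc=\langle\,\Oc,\ \RR_{\Oc}\Rc^\vee,\ \RR_{\langle\Rc^\vee,\Oc\rangle}(\wedge^2\Rc^\vee)\,\rangle$ with $\RR_{\langle\Rc^\vee,\Oc\rangle}=\RR_{\Oc}\circ\RR_{\Rc^\vee}$, so everything reduces to identifying the two mutated objects. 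A uniform feature simplifies the computation: using $\Ext_G^\bullet(\Ec_\lambda,\Ec_\mu)=\Ext^\bullet(\Ec_\lambda,\Ec_\mu)^G$ from \cite[Proposition 2.17]{KP_collections_isotropic}, and the fact that each relevant $\Ext^\bullet$ space will turn out to be one-dimensional --- and $G$ simple has no nontrivial one-dimensional representations --- the $G$-invariants are the whole space, the equivariant mutations agree with the ordinary ones on underlying sheaves, and each mutation falls under case (3) of Lemma \ref{lem:mutations_recipe}.

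First I would treat $\RR_{\Oc}\Rc^\vee$: here $\Ext^\bullet(\Rc^\vee,\Oc)\simeq H^\bullet(X,\Rc)=\CC[-1]$ by Corollary \ref{cor:cohomology_U4}, and dualizing the short exact sequence \ref{eq:sequence_of_tautologicals} of Lemma \ref{lem:U4_and_U5} produces the (necessarily unique nontrivial) extension $0\arw\Oc\arw\Uc^\vee\arw\Rc^\vee\arw 0$, so Lemma \ref{lem:mutations_recipe}(3) gives $\RR_{\Oc}\Rc^\vee\simeq\Uc^\vee$. Next, $\RR_{\langle\Rc^\vee,\Oc\rangle}(\wedge^2\Rc^\vee)$ is handled in two steps. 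For the inner mutation, the computation already carried out in the proof of Lemma \ref{lem:tangent_bundle_as_an_extension} (supplemented by the vanishing statements of the appendix, e.g. Lemma \ref{lem:cohomology_extension_U4_O}, to control the degrees other than $1$) gives $\Ext^\bullet(\wedge^2\Rc^\vee,\Rc^\vee)=\CC[-1]$, and the associated unique nontrivial extension is precisely the sequence \ref{eq:tangent_bundle_as_an_extension}, so $\RR_{\Rc^\vee}(\wedge^2\Rc^\vee)\simeq T_X$. For the outer mutation, $\Ext^\bullet(T_X,\Oc)\simeq H^\bullet(X,\Omega^1_X)=\CC[-1]$ because $X$ is a rational homogeneous variety of Picard rank one (so $h^{p,q}(X)=0$ for $p\neq q$ and $h^{1,1}(X)=1$); the twist by $\Oc(1)$ of the defining sequence \ref{eq:affine_tangent_bundle} of the affine tangent bundle is the extension $0\arw\Oc\arw\wh T_X(1)\arw T_X\arw 0$, unique because $h^1(X,T_X)=1$, so Lemma \ref{lem:mutations_recipe}(3) yields $\RR_{\Oc}T_X\simeq\wh T_X(1)$. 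Putting these together, $\forgetful\Bc=\langle\Oc,\Uc^\vee,\wh T_X(1)\rangle$; since a right dual collection generates the same triangulated subcategory as the original one, $\forgetful\langle\wedge^2\Rc^\vee,\Rc^\vee,\Oc\rangle_G$ and $\langle\Oc,\Uc^\vee,\wh T_X(1)\rangle$ span the same subcategory of $\dbcoh(X)$ --- which can also be read off directly, object by object, from the three exact sequences above.

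I expect the main obstacle to be the $\Ext$ bookkeeping rather than the mutation formalism: what makes the argument work is that each of $\Ext^\bullet(\Rc^\vee,\Oc)$, $\Ext^\bullet(\wedge^2\Rc^\vee,\Rc^\vee)$ and $\Ext^\bullet(T_X,\Oc)$ is exactly $\CC$ concentrated in cohomological degree $1$, for only then does case (3) of Lemma \ref{lem:mutations_recipe} apply and force the mutated objects to be the expected extensions. Via the tautological and normal-bundle sequences on $X$ this reduces to Borel--Bott--Weil computations for the $D_5$- and $B_4$-weights carried out in the appendix; one must also keep in mind that the mutations genuinely take place in $\dbcoh_G(X)$, which is harmless here only because all three $\Ext$ spaces are one-dimensional and therefore carry the trivial $G$-action.
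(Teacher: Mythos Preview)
Your proposal is correct and follows essentially the same route as the paper: compute the right dual of $\langle\wedge^2\Rc^\vee,\Rc^\vee,\Oc\rangle_G$ by applying Lemma \ref{lem:mutations_recipe}(3) three times, using the extensions \ref{eq:sequence_of_tautologicals}, \ref{eq:tangent_bundle_as_an_extension} and \ref{eq:affine_tangent_bundle} together with the one-dimensionality of the relevant $\Ext^1$ groups. Your version is in fact a bit more explicit than the paper's in justifying why the equivariant and non-equivariant mutations coincide (one-dimensional $\Ext$ spaces are automatically $G$-invariant) and in invoking Hodge theory for $\Ext^\bullet(T_X,\Oc)$; the only slip is the parenthetical ``unique because $h^1(X,T_X)=1$'', which should read $h^1(X,\Omega^1_X)=1$ --- but you had already established $\Ext^\bullet(T_X,\Oc)=\CC[-1]$, so this is redundant and harmless.
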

    \begin{proof}
        Following \cite[Section 3.3]{KP_collections_isotropic} we can produce a full exceptional collection for $\forgetful \langle\wedge^2\Rc^\vee, \Rc^\vee, \Oc\rangle_G$ by constructing the right dual collection to $\langle\wedge^2\Rc^\vee, \Rc^\vee, \Oc\rangle_G$, \emph{in the equivariant category}, and then applying the forgetful functor on each generator. In other words, we only need to show that the right dual of $\langle\wedge^2\Rc^\vee, \Rc^\vee, \Oc\rangle_G$ is $\langle \Oc, \Uc^\vee, \wh T_{ X }(1) \rangle_G$. Observe that, by Lemma \ref{lem:tangent_bundle_as_an_extension} and \ref{lem:equivariant_extension_1} we can apply Lemma \ref{lem:mutations_recipe} to obtain $\RR_{\Rc^\vee}\wedge^2\Rc^\vee =  T_X$.
        Now, let us mutate the result one step further to the right. The relevant Ext is $\Ext_G^\bullet( T_X, \Oc) = \CC[-1]$. Then, in light of the sequence \ref{eq:affine_tangent_bundle}, we conclude by Lemma \ref{lem:mutations_recipe} that $\RR_{\langle\Rc^\vee, \Oc\rangle}\wedge^2\Rc^\vee \simeq    \wh T_{ X }(1)$. Similarly, by Lemma \ref{lem:U4_and_U5} (and in particular by the computations in its proof) we have $\RR_\Oc\Rc^\vee \simeq\Uc^\vee$. This concludes the proof.
        \end{proof}
        \begin{lemma}
        There is the following equivalence of categories:
        \begin{equation*}
            \forgetful \langle\Sym^2\Rc^\vee, \Rc^\vee, \Oc\rangle_G \simeq \langle \Oc, \Uc^\vee, \Sym^2\Uc^\vee\rangle.
        \end{equation*}
    \end{lemma}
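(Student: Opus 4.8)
The plan is to follow the strategy used in the previous lemmas: by \cite[Section 3.3]{KP_collections_isotropic} it suffices to compute, inside $\dbcoh_G(X)$, the right dual collection $\langle F_1,F_2,F_3\rangle$ of $\langle\Sym^2\Rc^\vee,\Rc^\vee,\Oc\rangle_G$ in the sense of Definition \ref{def:right_dual}, and to check that applying $\forgetful$ to it yields $\langle\Oc,\Uc^\vee,\Sym^2\Uc^\vee\rangle$. By definition $F_1=\Oc$ and $F_2=\RR_\Oc\Rc^\vee$, and the identification $F_2\simeq\Uc^\vee$ is the same one already used above: it follows from Lemma \ref{lem:U4_and_U5} by applying Lemma \ref{lem:mutations_recipe}(3) to the dual of the (equivariant, by Lemma \ref{lem:equivariant_extension_1}) sequence \ref{eq:sequence_of_tautologicals}, that is, to $0\arw\Oc\arw\Uc^\vee\arw\Rc^\vee\arw 0$, together with $\Ext_G^\bullet(\Rc^\vee,\Oc)=\CC[-1]$. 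So the only thing to do is to identify $F_3=\RR_{\langle\Rc^\vee,\Oc\rangle}\Sym^2\Rc^\vee$.

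First I would record the short exact sequence
\[
    0\arw\Uc^\vee\arw\Sym^2\Uc^\vee\arw\Sym^2\Rc^\vee\arw 0,
\]
the $\Sym^2$-analogue of the sequence $0\arw\Rc^\vee\arw T_X\arw\wedge^2\Rc^\vee\arw 0$ of Lemma \ref{lem:tangent_bundle_as_an_extension}: it is obtained by applying the $\Sym^2$-Schur functor to $0\arw\Oc\arw\Uc^\vee\arw\Rc^\vee\arw 0$, taking the sub-bundle given by the image of the multiplication map $\Oc\otimes\Uc^\vee\arw\Sym^2\Uc^\vee$ (which is $\simeq\Uc^\vee$) with quotient $\Sym^2(\Uc^\vee/\Oc)=\Sym^2\Rc^\vee$; all maps being equivariant, it lives in $\dbcoh_G(X)$.

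Next, since $\RR_\Oc\Rc^\vee\simeq\Uc^\vee$, Lemma \ref{lem:mutations_are_cones} gives an equality of admissible subcategories $\langle\Rc^\vee,\Oc\rangle=\langle\Oc,\Uc^\vee\rangle$, hence $F_3=\RR_{\Uc^\vee}\RR_\Oc\Sym^2\Rc^\vee$. I would then observe that $\Ext_G^\bullet(\Sym^2\Rc^\vee,\Oc)=H^\bullet(X,\Sym^2\Rc)^G=0$, since the relevant weight is singular for Borel--Bott--Weil on $B_4/Q^4$ (Appendix \ref{sec:Borel--Bott--Weil}); therefore $\RR_\Oc\Sym^2\Rc^\vee=\Sym^2\Rc^\vee$ and $F_3=\RR_{\Uc^\vee}\Sym^2\Rc^\vee$. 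Finally, tensoring $0\arw\Oc\arw\Uc^\vee\arw\Rc^\vee\arw 0$ by $\Sym^2\Rc$ and using $H^\bullet(X,\Sym^2\Rc)=0$ again identifies $\Ext_G^\bullet(\Sym^2\Rc^\vee,\Uc^\vee)$ with $\Ext_G^\bullet(\Sym^2\Rc^\vee,\Rc^\vee)=H^\bullet(X,\Sym^2\Rc\otimes\Rc^\vee)^G$; decomposing $\Sym^2\Rc\otimes\Rc^\vee$ into irreducible homogeneous bundles via Lemma \ref{lem:smirnov} one is left with two summands, one of which is $\Rc$ (with $H^\bullet(X,\Rc)=\CC[-1]$ the trivial representation, by Corollary \ref{cor:cohomology_U4}) and the other Borel--Bott--Weil-acyclic, so $\Ext_G^\bullet(\Sym^2\Rc^\vee,\Uc^\vee)=\CC[-1]$. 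Then the displayed short exact sequence and Lemma \ref{lem:mutations_recipe}(3) give $\RR_{\Uc^\vee}\Sym^2\Rc^\vee\simeq\Sym^2\Uc^\vee$, whence $F_3\simeq\Sym^2\Uc^\vee$; applying $\forgetful$ concludes.

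I expect the only genuine obstacle to be the Borel--Bott--Weil bookkeeping behind the two cohomology inputs --- the acyclicity of $\Sym^2\Rc$ and the equality $\Ext_G^\bullet(\Sym^2\Rc^\vee,\Rc^\vee)=\CC[-1]$ --- which is of the same nature as, and no harder than, the computations already deferred to the Appendix; everything else is formal manipulation with mutations, exactly parallel to the treatment of $\langle\wedge^2\Rc^\vee,\Rc^\vee,\Oc\rangle_G$.
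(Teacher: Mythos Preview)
Your proof is correct and follows essentially the same route as the paper: reduce to computing the right dual in the equivariant category, use $\langle\Rc^\vee,\Oc\rangle=\langle\Oc,\Uc^\vee\rangle$, observe $\Ext_G^\bullet(\Sym^2\Rc^\vee,\Oc)=0$, and then identify $\RR_{\Uc^\vee}\Sym^2\Rc^\vee$ via the short exact sequence $0\to\Uc^\vee\to\Sym^2\Uc^\vee\to\Sym^2\Rc^\vee\to 0$ together with $\Ext_G^\bullet(\Sym^2\Rc^\vee,\Uc^\vee)=\CC[-1]$. The one genuine variation is in the last Ext computation: the paper proves it directly as $H^\bullet(X,\Sym^2\Rc\otimes\Uc^\vee)^G$ by resolving with $D_5$-homogeneous bundles (Lemma~\ref{lem:vanishing_mixed}), whereas you first reduce along $0\to\Oc\to\Uc^\vee\to\Rc^\vee\to 0$ to $H^\bullet(X,\Sym^2\Rc\otimes\Rc^\vee)^G$ and then decompose as $B_4$-irreducibles $\Ec_{\nu_1+2\nu_3-4\nu_4}\oplus\Rc$; both are short Borel--Bott--Weil checks of comparable difficulty. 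One small correction: your parenthetical citation to Lemma~\ref{lem:equivariant_extension_1} for the equivariance of the sequence $0\to\Oc\to\Uc^\vee\to\Rc^\vee\to 0$ is misplaced (that lemma concerns $\wedge^2\Rc^\vee$ and $\Rc^\vee$); what you need is that $H^1(X,\Rc)$ is the trivial $G$-representation, which is immediate from the Borel--Bott--Weil computation behind Corollary~\ref{cor:cohomology_U4}.
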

    \begin{proof}
        As above, we only need to prove that the right dual of $\langle\Sym^2\Rc^\vee, \Rc^\vee, \Oc\rangle_G$ is $\langle \Oc, \Uc^\vee, \Sym^2\Uc^\vee\rangle_G$. Recall that mutations do not depend on the choice of the exceptional collection of the category we are mutating through. Hence, by the proof of the previous lemma, we have $\RR_{\langle \Rc^\vee, \Oc\rangle}\Sym^2\Rc^\vee = \RR_{\langle \Oc, \Uc^\vee\rangle}\Sym^2\Rc^\vee$. 
        The right hand side of this last equation can be computed as follows: we first note that $\Ext_G(\Sym^2\Rc^\vee, \Oc) = 0$. In fact, one has $\Ext_G(\Sym^2\Rc^\vee, \Oc) = H^\bullet(X, \Sym^2\Rc)^G$, and the latter vanishes since $H^\bullet(X, \Sym^2\Rc)$ by Lemma \ref{lem:vanishing_sym2U4}. Finally, we are left to compute $\RR_{\Uc^\vee}\Sym^2\Rc^\vee$.\\
        By taking the second exterior power of the dual of the sequence \ref{eq:sequence_of_tautologicals} we get:
        \begin{equation*}
            0\arw \Uc^\vee\arw \Sym^2\Uc^\vee\arw \Sym^2\Rc^\vee\arw 0,
        \end{equation*}
        and since one has $\Ext_G^\bullet(\Sym^2\Rc^\vee, \Uc^\vee) = H^\bullet(X, \Sym^2\Rc\otimes\Uc^\vee)^G = \CC[-1]$ by Lemma \ref{lem:vanishing_mixed}, we are able to conclude that $\RR_{\Uc^\vee}\Sym^2\Rc^\vee \simeq \Sym^2\Uc^\vee$.
    \end{proof}
    Summing all up, we find:
    \begin{proposition}\label{prop:KP_collection_X}
        The following collection is exceptional in $\dbcoh( X )$:
        \begin{equation*}
            \begin{split}
                \Ac = \langle &\Oc, \Oc(1), \Oc(2), \Uc^\vee(2), \Sym^2\Uc^\vee(2), \Oc(3), \Uc^\vee(3), \Sym^2\Uc^\vee(3), \\
                & \hspace{40pt}\Oc(4), \Uc^\vee(4),    \wh T_{ X }(5), \Oc(5), \Uc^\vee(5),    \wh T_{ X }(6), \Oc(6), \Oc(7) \rangle
            \end{split}
        \end{equation*}
    \end{proposition}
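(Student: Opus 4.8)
The plan is to recognize $\Ac$ as the image under the forgetful functor of the right-dual blocks of the equivariant semiorthogonal decomposition described just above. By \cite[Theorem 9.3]{KP_collections_isotropic} the blocks $\Ac_0,\dots,\Ac_7$ form a semiorthogonal decomposition of $\dbcoh_G(X)$, and by \cite[Section 3.3]{KP_collections_isotropic} the collection $\langle\forgetful\Bc_0,\dots,\forgetful\Bc_7\rangle$ — where $\Bc_i$ denotes the right dual of $\Ac_i$ in the sense of Definition \ref{def:right_dual}, formed inside $\dbcoh_G(X)$ — is exceptional in $\dbcoh(X)$. It therefore suffices to compute each $\forgetful\Bc_i$ and to verify that splicing them together in the prescribed order reproduces the list $\Ac$ verbatim.

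First I would dispose of the four rank-one blocks: the right dual of a one-term exceptional collection is that term itself, so $\forgetful\Bc_0 = \langle\Oc\rangle$, $\forgetful\Bc_1 = \langle\Oc(1)\rangle$, $\forgetful\Bc_6 = \langle\Oc(6)\rangle$ and $\forgetful\Bc_7 = \langle\Oc(7)\rangle$. For the four length-three blocks I would use that tensoring by the line bundle $\Oc(k) = \Ec_{k\omega_4}$ is an autoequivalence of $\dbcoh_G(X)$ which commutes with mutations, hence with the passage to the right dual, and that $\forgetful$ commutes with this twist. Since $\Ac_2,\Ac_3$ are the twists by $\Oc(2),\Oc(3)$ of $\langle\Sym^2\Rc^\vee,\Rc^\vee,\Oc\rangle_G$ and $\Ac_4,\Ac_5$ are the twists by $\Oc(4),\Oc(5)$ of $\langle\wedge^2\Rc^\vee,\Rc^\vee,\Oc\rangle_G$, the two preceding lemmas of this subsection give
\begin{align*}
    \forgetful\Bc_2 &= \langle\Oc(2),\Uc^\vee(2),\Sym^2\Uc^\vee(2)\rangle, & \forgetful\Bc_3 &= \langle\Oc(3),\Uc^\vee(3),\Sym^2\Uc^\vee(3)\rangle, \\
    \forgetful\Bc_4 &= \langle\Oc(4),\Uc^\vee(4),\wh T_X(5)\rangle, & \forgetful\Bc_5 &= \langle\Oc(5),\Uc^\vee(5),\wh T_X(6)\rangle,
\end{align*}
using $\wh T_X(1)\otimes\Oc(k)\simeq\wh T_X(k+1)$ for the last two. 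Concatenating $\forgetful\Bc_0,\dots,\forgetful\Bc_7$ in this order yields exactly $\Ac$, which proves the statement.

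Because the resolutions of Section \ref{sec:vector_bundles} and the two mutation lemmas above already carry all the cohomological content, the only point requiring care here is the appeal to \cite[Section 3.3]{KP_collections_isotropic}: one must confirm that its hypotheses are met for $X = G/P^4$ — that $\Ac_0,\dots,\Ac_7$ is precisely the equivariant block decomposition output by \cite[Theorem 9.3]{KP_collections_isotropic}, with each block generated by an exceptional sequence of homogeneous bundles — so that forgetting the equivariant structure on the right-dual blocks preserves semiorthogonality across blocks and not merely within each block. This is a matter of matching our bookkeeping of weights and orderings to theirs rather than a genuinely new argument; fullness of the resulting collection is of course \emph{not} guaranteed by this step and is treated separately later.
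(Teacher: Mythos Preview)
Your proposal is correct and follows exactly the paper's approach: the proposition is obtained by ``summing up'' the two preceding lemmas (computing the right duals of the two length-three blocks and applying the forgetful functor) together with the trivial rank-one blocks, and appealing to \cite[Section 3.3]{KP_collections_isotropic} for exceptionality across blocks. Your explicit remark that twisting by $\Oc(k)$ commutes with mutations and with $\forgetful$ is the only detail you spell out more carefully than the paper does.
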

    \begin{remark}
        The presence of the affine tangent bundle in exceptional collections of homogeneous varieties has been investigated in \cite[Observation 1.4]{SmirnovTypeFArxiv}, where the author observes the relation with the fact that the tangent bundle is a homogeneous, irreducible vector bundle, and therefore it is exceptional in the equivariant category. This fact is true only for cominuscule varieties (like the spinor tenfold).
    \end{remark}
    
    \subsection{Fullness}\label{sec:fullness}
    Hereafter we describe the main result of this paper (Theorem \ref{prop:main}), which is the fullness of the KP collection for $X$ we constructed in Proposition \ref{prop:KP_collection_X}. In particular, we describe a sequence of mutations which, applied to the KP collection, yield the full exceptional collection constructed by Kuznetsov in \cite{KuznetsovHyperplane}, thus proving that the KP collection is full as well.
    \begin{proposition}
        \label{prop:main}
        There is an equivalence of categories:
        \begin{equation*}
            \begin{split}
                \langle &\Oc, \Oc(1), \Oc(2), \Uc^\vee(2), \Sym^2\Uc^\vee(2), \Oc(3), \Uc^\vee(3), \Sym^2\Uc^\vee(3), \\
                & \hspace{40pt}\Oc(4), \Uc^\vee(4),    \wh T_{ X }(5), \Oc(5), \Uc^\vee(5),    \wh T_{ X }(6), \Oc(6), \Oc(7) \rangle \\
                \simeq \langle &\Oc, \Uc^\vee, \Oc(1), \Uc^\vee(1), \Oc(2), \Uc^\vee(2), \Oc(3), \Uc^\vee(3), \\
                & \hspace{40pt}\Oc(4), \Uc^\vee(4), \Oc(5), \Uc^\vee(5), \Oc(6), \Uc^\vee(6), \Oc(7), \Uc^\vee(7)\rangle \\
            \end{split}
        \end{equation*}
    \end{proposition}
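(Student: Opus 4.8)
Our plan is to deduce Proposition \ref{prop:main} --- and with it Corollary \ref{cor:main}, the fullness of the KP collection --- by exhibiting an explicit finite sequence of mutations carrying the collection of Proposition \ref{prop:KP_collection_X} to Kuznetsov's collection; since a mutation of a full exceptional collection is again a full exceptional collection generating the same triangulated subcategory, and since Kuznetsov's collection is full \cite{KuznetsovHyperplane}, this proves the statement. The two collections have the same length, $16$, and already share the objects $\Oc,\Oc(1),\dots,\Oc(7)$ and $\Uc^\vee(2),\Uc^\vee(3),\Uc^\vee(4),\Uc^\vee(5)$; they differ only in that the KP collection contains the four ``non-Lefschetz'' bundles $\Sym^2\Uc^\vee(2),\Sym^2\Uc^\vee(3),\wh T_X(5),\wh T_X(6)$ where Kuznetsov's contains $\Uc^\vee,\Uc^\vee(1),\Uc^\vee(6),\Uc^\vee(7)$ instead. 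A $K$-theory bookkeeping based on the resolutions of Section \ref{sec:vector_bundles} pins down the correspondence: $\Sym^2\Uc^\vee(k)\rightsquigarrow\Uc^\vee(k-2)$ for $k=2,3$ and $\wh T_X(k)\rightsquigarrow\Uc^\vee(k+1)$ for $k=5,6$. The heart of the argument is to realise these four trades by mutations and then reshuffle everything into the rectangular pattern $\langle\Oc(j),\Uc^\vee(j)\rangle_{j=0,\dots,7}$.

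The first two trades are governed by the big resolution of Lemma \ref{lem:big_sequence}. Twisted by $\Oc(k-2)$ it reads
\begin{equation*}
0\to\Uc^\vee(k-2)\to\VV^{D_5}_{\omega_4}\otimes\Oc(k-1)\to\VV^{D_5}_{\omega_1}\otimes\Uc(k)\to\VV^{D_5}_{2\omega_1}\otimes\Oc(k)\to\Sym^2\Uc^\vee(k)\to0,
\end{equation*}
where $\Uc(k)\simeq\wedge^4\Uc^\vee(k-2)$. Using the twisted tautological sequence \ref{eq:tautological_sequence_rank_5} (with $H^\bullet(X,\Uc^\vee)=\VV^{D_5}_{\omega_1}[0]$) to replace, by preliminary mutations, the neighbours of $\Sym^2\Uc^\vee(k)$ so that its left tail is $\langle\Oc(k-1),\Uc(k),\Oc(k)\rangle$, the three-step left-mutation of $\Sym^2\Uc^\vee(k)$ through this tail --- each step of the form handled by Lemma \ref{lem:mutations_recipe}, with the successive syzygies $\ker(\VV^{D_5}_{2\omega_1}\otimes\Oc(k)\to\Sym^2\Uc^\vee(k))$ and $\wh T_X^\vee(k-1)$ identified via the factorisations \ref{eq:three_terms_extension_1}--\ref{eq:three_terms_extension_2} and the dual of Lemma \ref{lem:affine_tangent_bundle_as_a_kernel} --- produces (up to a shift) $\Uc^\vee(k-2)$ in that slot. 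The other two trades are governed by Lemma \ref{lem:affine_tangent_bundle_as_a_kernel}: the twisted sequence $0\to\wh T_X(k)\to\VV^{D_5}_{\omega_4}\otimes\Oc(k)\to\Uc(k+1)\to0$ gives $\RR_{\Oc(k)}\wh T_X(k)\simeq\Uc(k+1)$ by Lemma \ref{lem:mutations_recipe}, and the twisted tautological sequence then gives $\RR_{\Oc(k+1)}\Uc(k+1)\simeq\Uc^\vee(k+1)$, so two consecutive right-mutations turn $\wh T_X(k)$ into $\Uc^\vee(k+1)$.

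I would carry this out working from the outside of the collection inward. On the right: first $\wh T_X(6)$ is turned into $\Uc^\vee(7)$ in the last slot; then $\wh T_X(5)$ is turned into $\Uc^\vee(6)$, its transient $\Uc(6)$ passing harmlessly across $\Uc^\vee(5)$ (orthogonal to it, by the vanishing of $H^\bullet(X,\Uc^\vee\otimes\Uc^\vee(-1))$ and of $H^\bullet(X,\Uc\otimes\Uc(1))$). Symmetrically on the left, $\Sym^2\Uc^\vee(2)$ is turned into $\Uc^\vee$ near the second slot and then $\Sym^2\Uc^\vee(3)$ into $\Uc^\vee(1)$ near the fourth slot; here each trade leaves behind a transient $\Uc(j)$, which a short run of clean-up mutations removes --- swapping $\langle\Uc(j),\Oc(j)\rangle$ back to $\langle\Oc(j),\Uc^\vee(j)\rangle$ via \ref{eq:tautological_sequence_rank_5} and commuting the orthogonal pairs $\Uc(j),\Uc^\vee(j-1)$ --- so that the outcome is exactly Kuznetsov's collection.

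The real obstacle is not any single mutation but the combined bookkeeping: one propagates sixteen objects through roughly twenty mutations, keeping the whole collection semiorthogonal at each stage, scheduling the four trades so that a ``moving'' bundle never crosses another non-Lefschetz bundle or an unresolved transient $\Uc(j)$, and --- at every step --- invoking the Borel--Bott--Weil computations of Appendix \ref{sec:Borel--Bott--Weil} to verify that the relevant $\Ext^\bullet$ group is precisely the one required by Lemma \ref{lem:mutations_recipe}, so that each mutation yields an honest (shifted) sheaf rather than an uncontrolled complex. Beyond the cohomology statements already proved in Section \ref{sec:vector_bundles}, this requires a handful of further vanishings --- for instance $H^\bullet(X,\Uc^\vee(-1))=0$, $H^\bullet(X,\Uc^\vee\otimes\wh T_X^\vee(-1))=0$, $H^\bullet(X,\Uc^\vee\otimes\Uc^\vee(-1))=0$ and $H^\bullet(X,\Uc\otimes\Uc(1))=0$ --- together with the identifications $H^\bullet(X,\Sym^2\Uc^\vee)=\VV^{D_5}_{2\omega_1}[0]$ and $H^\bullet(X,\wh T_X^\vee)=(\VV^{D_5}_{\omega_4})^\vee[0]$, all of which are Borel--Bott--Weil exercises. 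Once all these are in hand, the chain of mutations above establishes the claimed equivalence.
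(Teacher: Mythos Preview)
Your proposal is correct and follows essentially the same route as the paper's own proof: both use the twisted sequence of Lemma~\ref{lem:affine_tangent_bundle_as_a_kernel} to trade $\wh T_X(k)$ for $\Uc(k+1)$ via a right mutation through $\Oc(k)$, and the syzygies of the big resolution of Lemma~\ref{lem:big_sequence} to trade $\Sym^2\Uc^\vee(k)$ for $\Uc^\vee(k-2)$ via three left mutations through $\langle\Oc(k-1),\Uc(k),\Oc(k)\rangle$, invoking the same orthogonality $\Ext^\bullet(\Uc(j),\Uc^\vee(j-1))=0$ to commute the transient bundles and the tautological sequence to convert each leftover $\Uc(j)$ into $\Uc^\vee(j)$. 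The only difference is scheduling --- the paper defers all the $\Uc(j)\rightsquigarrow\Uc^\vee(j)$ clean-ups to the very end, whereas you complete each trade fully before moving on --- and you are in fact slightly more explicit than the paper about the intermediate swaps needed on the left side.
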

    \begin{proof}
    Let us first describe how we can mutate $   \wh T_{ X }(6)$ one step to the right.
    In light of Lemmas \ref{lem:mutations_recipe} and \ref{lem:affine_tangent_bundle_as_a_kernel}, we have $\RR_{\Oc(6)}   \wh T_{ X }(6)\simeq \Uc^\vee(7)$ if we can prove that $\Ext^\bullet(   \wh T_{ X }(6), \Oc(6))\simeq \VV^{D_5}_{\omega_4}[0]$, and the latter follows by Corollary \ref{cor:sections_dual_affine_tangent_bundle} once we note that $\Ext^\bullet(   \wh T_{ X }(6), \Oc(6))\simeq H^\bullet(X, \wh T_X^\vee)$. We can apply the same exact strategy to mutate $   \wh T_{ X }(5)$ one step to the right, obtaining:
    \begin{equation*}
            \begin{split}
                \langle &\Oc, \Oc(1), \Oc(2), \Uc^\vee(2), \Sym^2\Uc^\vee(2), \Oc(3), \Uc^\vee(3), \Sym^2\Uc^\vee(3), \\
                & \hspace{40pt}\Oc(4), \Uc^\vee(4), \Oc(5), \Uc(6), \Uc^\vee(5), \Oc(6), \Uc(7), \Oc(7) \rangle.
            \end{split}
        \end{equation*}
    Now, by Corollary \ref{cor:sections_symmetric_powers}, the sequence \ref{eq:tautological_sequence_rank_5} and Lemma \ref{lem:mutations_recipe} we easily see that $\LL_{\Oc(k)}\Uc^\vee(k) \simeq \Uc(k)$ for any $k$. Let us apply it to $k=2$ in the collection above. Then, note that by Corollary \ref{cor:sections_symmetric_powers} one has:
    \begin{equation*}
        \Ext^\bullet(\Oc(2), \Sym^2\Uc^\vee(2)) = \VV^{D_5}_{2\omega_1}[0]
    \end{equation*}
    and therefore, by Lemma \ref{lem:mutations_recipe} one has $\LL_{\Oc(2)} \Sym^2\Uc^\vee(2) = \wt\Kc(2) = \ker(\VV^{D_5}_{2\omega_1}\otimes\Oc(2)\arw \Sym^2\Uc^\vee(2))$ (see the proof of Lemma \ref{lem:big_sequence} for more on this bundle). Now, we claim that $\LL_{\Uc(2)}\wt\Kc(2) = \wh T_X^\vee(1)$. In fact, $\Ext^\bullet(\Uc(2), \wt\Kc(2))$ can be computed by applying $\Ext(\Uc(2), -)$ to the sequence describing $\wt\Kc(2)$ as a kernel (see Diagram \ref{eq:big_big_diagram}): in particular by Corollary \ref{cor:sections_symmetric_powers} one has $\Ext^\bullet(\Uc(2), \Oc(2)) = \VV^{D_5}_{\omega_1}[0]$, and, together with Corollary \ref{cor:some_bott_computation}, this allows to conclude that $
    \Ext^\bullet(\Uc(2), \wt\Kc(2)) \simeq \VV^{D_5}_{\omega_1}[0]$. From this and Lemma \ref{lem:mutations_recipe} the claim follows. Let us now mutate the resulting $\wh T_X^\vee(1)$ a further step to the left. We have $\Ext^\bullet(\Oc(1), \wh T_X^\vee(1)) \simeq V_{\omega_4}[0]$, and therefore, by the appropriate twist of the dual of Equation \ref{eq:affine_tangent_bundle_as_a_kernel}, we find $\LL_{\Oc(1)}\wh T_X^\vee(1) \simeq \Uc^\vee$.
    If we repeat the same exact procedure on $\Sym^2\Uc^\vee(3)$ we get the collection:
    \begin{equation*}
            \begin{split}
                \langle &\Oc, \Uc^\vee, \Oc(1), \Uc^\vee(1), \Oc(2), \Uc^\vee(2), \Oc(3), \Uc^\vee(3), \\
                & \hspace{40pt}\Oc(4), \Uc^\vee(4), \Oc(5), \Uc(6), \Uc^\vee(5), \Oc(6), \Uc(7), \Oc(7) \rangle.
            \end{split}
    \end{equation*}
    Since by Lemma \ref{lem:orthogonals_in_the_collection} we have $\Ext(\Uc(6), \Uc^\vee(5)) = 0$, we can freely exchange such objects. The last step is to mutate each bundle of the form $\Uc(k)$ one step to the right. One has $\Ext^\bullet(\Uc(k), \Oc(k)) = \VV^{D_5}_{\omega_1}[0]$, and by the sequence \ref{eq:tautological_sequence_rank_5} and Lemma \ref{lem:mutations_recipe} we obtain $\RR_{\Oc(k)}\Uc(k) \simeq\Uc^\vee(k)$. This leads to Kuznetsov's collection, hence concluding our proof.
    \end{proof}
    The immediate consequence of Proposition \ref{prop:main} is the following:
    \begin{corollary}(Theorem \ref{thm:main})
    \label{cor:main}
        The exceptional collection of Equation \ref{eq:KP_collection_X} is full.
    \end{corollary}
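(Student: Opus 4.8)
The plan is to obtain the corollary as a formal consequence of Proposition \ref{prop:main} together with two standard facts: that mutations preserve fullness, and that the collection on the right-hand side of Proposition \ref{prop:main} is (a rearrangement of) the full exceptional collection constructed by Kuznetsov in \cite[Section 6.2]{KuznetsovHyperplane}.

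First I would record the fullness-invariance of mutations. If $\dbcoh(Y)=\langle E_1,\dots,E_n\rangle$ is a full exceptional collection, then for each $i$ the mutated objects $\LL_{E_i}E_{i+1}$ and $\RR_{E_{i+1}}E_i$ of Lemma \ref{lem:mutations_are_cones} lie in the triangulated subcategory generated by $E_i,E_{i+1}$ (they are cones of morphisms between the two, possibly shifted), and conversely $E_i$ (resp. $E_{i+1}$) is recovered as a cone built from the mutated object and $E_{i+1}$ (resp. $E_i$); hence each mutation produces an exceptional collection generating the \emph{same} triangulated subcategory, so it remains full. Proposition \ref{prop:main} exhibits exactly such a finite chain of left and right mutations carrying the KP collection $\Ac$ of Equation \ref{eq:KP_collection_X} to $\langle \Oc,\Uc^\vee,\Oc(1),\Uc^\vee(1),\dots,\Oc(7),\Uc^\vee(7)\rangle$. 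Therefore the triangulated subcategory of $\dbcoh(X)$ generated by $\Ac$ equals the one generated by this latter collection.

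Second I would identify that latter collection with Kuznetsov's. In \cite[Section 6.2]{KuznetsovHyperplane} a full exceptional collection for the spinor tenfold is built from the blocks $\langle \Oc(i),\Uc^\vee(i)\rangle$ for $i=0,\dots,7$ (equivalently, involving $\Uc(i)$ via the tautological sequence \ref{eq:tautological_sequence_rank_5}); up to reordering inside mutually orthogonal blocks — an operation that does not change the generated subcategory — this is precisely the right-hand collection above. Since Kuznetsov's collection is full, its generated subcategory is all of $\dbcoh(X)$, and by the previous paragraph so is $\langle\Ac\rangle$. Hence the KP collection of Equation \ref{eq:KP_collection_X} is full, which is the assertion of Theorem \ref{thm:main}.

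The only real obstacle is bookkeeping rather than conceptual: one must be sure that every mutation identity invoked in the proof of Proposition \ref{prop:main} genuinely applies, i.e. that the hypotheses of Lemma \ref{lem:mutations_recipe} are met at each step — these are exactly the $\Ext$-vanishing and $\Ext$-computation statements deferred to Appendix \ref{sec:Borel--Bott--Weil} (and the structural Lemmas \ref{lem:affine_tangent_bundle_as_a_kernel}, \ref{lem:big_sequence}) — and that the terminal collection coincides on the nose with Kuznetsov's, not merely up to an unspecified ambiguity. Once those checks are in place, the corollary is immediate.
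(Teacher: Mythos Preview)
Your proposal is correct and matches the paper's approach: the paper presents Corollary \ref{cor:main} as an immediate consequence of Proposition \ref{prop:main}, since the right-hand collection there is precisely Kuznetsov's full exceptional collection from \cite[Section 6.2]{KuznetsovHyperplane}, and mutations preserve the generated subcategory. Your remark about ``reordering inside mutually orthogonal blocks'' is unnecessary --- the target collection in Proposition \ref{prop:main} is already Kuznetsov's on the nose --- but otherwise your argument is exactly the intended one.
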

    \appendix
    
    \section{Borel--Bott--Weil computations}\label{sec:Borel--Bott--Weil}
    
    In this appendix, we will gather all cohomological computations done by  means of the Borel--Bott--Weil theorem \cite{bott}. This  well-known theorem provides a simple, algorithmic recipe to compute the cohomology of irreducible vector bundles.
    \begin{theorem}[Borel--Bott--Weil]\label{thm:borel_weil_bott}
        Let $\Ec_\omega$ be a homogeneous, irreducible vector bundle on a rational homogeneous variety $G/P$, let us call $\rho$ the sum of all fundamental weights. Then one and only one of the following statements is true:
        \begin{enumerate}
            \item there exists a sequence $s_p$ of simple Weyl reflections of length $p$ such that $s_p(\omega+\rho) - \rho$ is dominant (i.e. all coefficients of its expansion in the fundamental weights are non-negative). Then $H^p(G/P, \Ec_\omega)\simeq V^G_{s_p(\omega+\rho) - \rho}$ and all the other cohomology is trivial.
            \item  there is no such $s_p$ as in point (1). Then $\Ec_\omega$ has no cohomology.
        \end{enumerate}
    \end{theorem}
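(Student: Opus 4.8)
This is the classical Borel--Bott--Weil theorem, and the proof I would reconstruct is Demazure's geometric induction: establish the statement first for line bundles on the full flag variety $G/B$, and then descend to $G/P$. Write $\Lc_\mu$ for the homogeneous line bundle on $G/B$ attached to an integral weight $\mu$. For the descent, let $p\colon G/B\arw G/P$ be the projection, whose fibre is the flag variety $P/B$ of the Levi $L$; if $\omega$ is $P$-dominant, ordinary Borel--Weil applied along the fibres gives $p_*\Lc_\omega\simeq\Ec_\omega$ and $R^{>0}p_*\Lc_\omega=0$, whence $H^\bullet(G/P,\Ec_\omega)\simeq H^\bullet(G/B,\Lc_\omega)$ by the Leray spectral sequence. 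So it suffices to compute $H^\bullet(G/B,\Lc_\mu)$ for every integral weight $\mu$, and the statement on $G/P$ is read off at the end.

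The engine is the family of $\PP^1$-fibrations $\pi_i\colon G/B\arw G/P_i$ attached to the minimal parabolics $P_i\supset B$, one for each simple root $\alpha_i$. The restriction of $\Lc_\mu$ to a fibre is $\Oc_{\PP^1}(d)$ with $d=\langle\mu,\alpha_i^\vee\rangle$, and the relative dualizing sheaf is $\omega_{\pi_i}\simeq\Lc_{-\alpha_i}$. Three cases occur: if $d=-1$ then $R\pi_{i*}\Lc_\mu=0$, hence $H^\bullet(G/B,\Lc_\mu)=0$; if $d\geq 0$ then $R^1\pi_{i*}\Lc_\mu=0$; if $d\leq -2$, put $\mu':=s_i\cdot\mu=s_i(\mu+\rho)-\rho=\mu-(d+1)\alpha_i$, so that $\langle\mu',\alpha_i^\vee\rangle=-d-2\geq 0$, and relative Serre duality along $\pi_i$ (equivalently $SL_2$-representation theory on the fibres) gives $\pi_{i*}\Lc_\mu=0$ together with a $G$-equivariant isomorphism $R^1\pi_{i*}\Lc_\mu\simeq\pi_{i*}\Lc_{\mu'}$; the two Leray spectral sequences then combine into the degree shift $H^p(G/B,\Lc_\mu)\simeq H^{p-1}(G/B,\Lc_{\mu'})$.

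Now the dichotomy. If $\mu+\rho$ is singular, i.e. $\langle\mu,\alpha_i^\vee\rangle=-1$ for some $i$, the first case gives $H^\bullet(G/B,\Lc_\mu)=0$: this is alternative (2), and one checks by $W$-equivariance of the dot-action that no chamber walk can then reach a dominant weight. If $\mu+\rho$ is regular, simple transitivity of $W$ on the Weyl chambers provides a unique $w\in W$ with $w(\mu+\rho)$ dominant regular, and I induct on $\ell:=\ell(w)$. For $\ell=0$, $\mu$ is dominant; Borel--Weil gives $H^0(G/B,\Lc_\mu)=V^G_\mu$, and $H^{>0}(G/B,\Lc_\mu)=0$ by Kodaira vanishing (write $\Lc_\mu\simeq\omega_{G/B}\otimes\Lc_{\mu+2\rho}$, with $\Lc_{\mu+2\rho}$ ample). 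For $\ell>0$, regularity forces some $\alpha_i$ with $\langle\mu,\alpha_i^\vee\rangle\leq -2$; the weight $\mu'=s_i\cdot\mu$ has dominant-chamber representative $ws_i$, of length $\ell-1$, and the same dominant weight $\mu_{\mathrm{dom}}:=w(\mu+\rho)-\rho$, so the $\PP^1$-step and the inductive hypothesis give $H^\bullet(G/B,\Lc_\mu)\simeq H^{\bullet-1}(G/B,\Lc_{\mu'})=V^G_{\mu_{\mathrm{dom}}}[-\ell]$. Descending through $p$, with $p=\ell$ the length of a reduced word of simple reflections carrying $\omega+\rho$ into the dominant chamber, yields exactly alternative (1); the uniqueness of $(p,\mu_{\mathrm{dom}})$ is once more simple transitivity on chambers.

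The step I expect to be the main obstacle is the equivariant bookkeeping in the case $d\leq -2$: one must verify that the relative-duality isomorphism is not merely an isomorphism of the underlying $\Oc_{G/P_i}$-modules but a $G$-equivariant one identifying $R^1\pi_{i*}\Lc_\mu$ with $\pi_{i*}\Lc_{s_i\cdot\mu}$, and in particular that the twist by $\omega_{\pi_i}\simeq\Lc_{-\alpha_i}$ is precisely what converts $d=\langle\mu,\alpha_i^\vee\rangle$ into $-d-2$ and reproduces the $\rho$-shift in the dot-action. The remaining inputs --- the Coxeter fact that $\ell(ws_i)=\ell(w)-1\iff\langle\mu+\rho,\alpha_i^\vee\rangle<0$ (via $\langle\mu+\rho,\alpha_i^\vee\rangle=\langle w(\mu+\rho),w\alpha_i^\vee\rangle$ and dominance-regularity of $w(\mu+\rho)$), and the base-case Kodaira vanishing --- are standard. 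An alternative route is Bott's original algebraic proof, combining Kostant's theorem $H^\bullet(\mathfrak n,V_\lambda)=\bigoplus_{w\in W}\CC_{w\cdot\lambda}$ (with $\CC_{w\cdot\lambda}$ placed in cohomological degree $\ell(w)$) with a Hochschild--Serre argument relating $H^\bullet(G/B,\Lc_\mu)$ to Lie-algebra cohomology; the geometric induction above is, however, the more self-contained.
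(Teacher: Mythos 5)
The paper states Theorem~\ref{thm:borel_weil_bott} without proof, citing Bott's original paper \cite{bott}; Bott's argument goes through Kostant's computation of $\mathfrak{n}$-cohomology and a Hochschild--Serre spectral sequence. You instead give Demazure's geometric induction via the $\PP^1$-fibrations $G/B\arw G/P_i$, which is a different but by now equally standard route, and your sketch of the main steps (reduction to $G/B$ by pushing forward along $P/B$, the three cases $d=-1$, $d\geq 0$, $d\leq -2$ for $d=\langle\mu,\alpha_i^\vee\rangle$, the degree-shift isomorphism $H^\bullet(G/B,\Lc_\mu)\simeq H^{\bullet-1}(G/B,\Lc_{s_i\cdot\mu})$, the base case by Borel--Weil plus Kodaira vanishing, and the induction on $\ell(w)$) is sound; the relative-duality bookkeeping you flag as the delicate point does indeed work out, since $-s_i(-\mu-\alpha_i)=s_i\mu-\alpha_i=s_i\cdot\mu$.

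One imprecision should be corrected. You write that $\mu+\rho$ singular is ``i.e.\ $\langle\mu,\alpha_i^\vee\rangle=-1$ for some $i$.'' That equivalence is false: singularity of $\mu+\rho$ means $\langle\mu+\rho,\alpha^\vee\rangle=0$ for some \emph{positive} root $\alpha$, which need not be simple, so the $d=-1$ case does not directly apply to an arbitrary singular $\mu$. The correct argument is the one you compress into your last clause: run the same chamber walk as in the regular case, noting that the length of the reflecting word decreases at each $d\leq -2$ step; since singularity of $\mu+\rho$ is $W$-invariant, the walk can never terminate at a dominant weight, hence it must eventually encounter a simple root with $d=-1$, where $R\pi_{i*}\Lc_\mu=0$ and the Leray spectral sequence gives $H^\bullet(G/B,\Lc_\mu)=0$. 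Stating this explicitly would close the gap.
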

    While we chose to illustrate all computations in detail, the algorithm can be easily automatized. See, for example, the script \cite{python_script}. 
    \subsection{Cohomology computations on \texorpdfstring{$D_5/P^4$}{something}}\label{subsec:bwb_D5}
    Here we follow the notation we fixed in Section \ref{subsubsec:D5_description}.
    \begin{lemma}\label{lem:D5_dominant_weights}
        For every nonnegatgive $a,b,c,d\in\ZZ$, one has $H^\bullet(X, \Ec_{a\omega_1+b\omega_2+c\omega_3+d\omega_5}) = \VV^{D_5}_{a\omega_1+b\omega_2+c\omega_3+d\omega_5}[0]$.
    \end{lemma}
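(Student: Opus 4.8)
The plan is to invoke the Borel--Bott--Weil theorem (Theorem \ref{thm:borel_weil_bott}) directly; the content of the lemma is simply that the relevant weight is already strictly dominant, so the first alternative of the theorem applies with no Weyl reflections. First I would record that $\lambda := a\omega_1 + b\omega_2 + c\omega_3 + d\omega_5$ has vanishing $\omega_4$-coefficient, so it is $L$-dominant (hence $\Ec_\lambda$ is a genuine irreducible homogeneous bundle on $X = D_5/P^4$), and that $a,b,c,d \geq 0$ makes it $G$-dominant as well.

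Next I would compute $\lambda + \rho$, where $\rho = \omega_1 + \omega_2 + \omega_3 + \omega_4 + \omega_5$ is the sum of the fundamental weights of $D_5$:
\[
\lambda + \rho = (a+1)\omega_1 + (b+1)\omega_2 + (c+1)\omega_3 + \omega_4 + (d+1)\omega_5 .
\]
Every coefficient is at least $1$, so $\lambda + \rho$ is regular and dominant. Hence we land in case (1) of Theorem \ref{thm:borel_weil_bott} with the empty chain of reflections, i.e. $p = 0$ and $s_0$ the identity, so that $s_0(\lambda + \rho) - \rho = \lambda$ is dominant; the theorem then gives $H^0(X, \Ec_\lambda) \simeq \VV^{D_5}_\lambda$ and $H^p(X, \Ec_\lambda) = 0$ for all $p > 0$, which is exactly the assertion $H^\bullet(X, \Ec_\lambda) = \VV^{D_5}_\lambda[0]$.

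Honestly there is no serious obstacle here: the only things worth checking are that dropping the $\omega_4$-component (forced by the choice of the parabolic $P^4$) does not spoil $G$-dominance of $\lambda$, and that $\lambda + \rho$ lands in the interior of the dominant chamber, so that no reflection is needed. This lemma is the easiest instance of the bookkeeping carried out in the appendix; the genuinely laborious cases are those with non-dominant input weights, where one must produce a minimal-length chain of simple reflections carrying $\omega + \rho$ into the dominant chamber and read off the resulting cohomological degree $p$.
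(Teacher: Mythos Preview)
Your proof is correct and follows exactly the same approach as the paper: observe that the weight is already dominant and apply the Borel--Bott--Weil theorem with $p=0$. The paper's proof is a one-line version of your argument, omitting the explicit computation of $\lambda+\rho$.
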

    \begin{proof}
        The weight $a\omega_1+b\omega_2+c\omega_3+d\omega_5$ is already dominant: by Theorem \ref{thm:borel_weil_bott} we immediately conclude.
    \end{proof}
    \begin{corollary}\label{cor:sections_symmetric_powers}
        For every nonnegative integer $r$, one has $H^\bullet(X, \Sym^r\Uc^\vee) = \VV^{D_5}_{r\omega_1}[0]$.
    \end{corollary}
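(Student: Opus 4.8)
The plan is to reduce the statement directly to Lemma \ref{lem:D5_dominant_weights}. First I would recall from Equation \ref{eq:bundles_to_weights_D5} that for every positive integer $r$ the bundle $\Sym^r\Uc^\vee$ is the homogeneous irreducible bundle $\Ec_{r\omega_1}$. The point worth spelling out is irreducibility: on the Levi factor $SL(5)\times\CC^*$ the bundle $\Uc^\vee$ corresponds to the standard representation of $SL(5)$, of highest weight $\omega_1$, and $\Sym^r$ of the standard representation is the irreducible $SL(5)$-representation of highest weight $r\omega_1$; hence by Lemma \ref{lem:smirnov} (or directly by the equivalence $\coh^G(G/P)\simeq\operatorname{Rep}(P)$) one gets $\Sym^r\Uc^\vee \simeq \Ec_{r\omega_1}$. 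For $r=0$ the bundle is just $\Oc = \Ec_0$, so the same description holds with $r\omega_1 = 0$.

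Next I would apply Lemma \ref{lem:D5_dominant_weights} with $(a,b,c,d) = (r,0,0,0)$. The weight $r\omega_1$ is already dominant (all coefficients in the basis of fundamental weights are nonnegative), so Borel--Bott--Weil (Theorem \ref{thm:borel_weil_bott}) applies in case (1) with the trivial Weyl word $s_0 = \mathrm{id}$, giving $H^0(X, \Ec_{r\omega_1}) = \VV^{D_5}_{r\omega_1}$ and $H^p(X,\Ec_{r\omega_1}) = 0$ for $p>0$. Translating back through the identification $\Sym^r\Uc^\vee \simeq \Ec_{r\omega_1}$ yields exactly $H^\bullet(X, \Sym^r\Uc^\vee) = \VV^{D_5}_{r\omega_1}[0]$.

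Since this is an immediate corollary of the preceding lemma, there is no genuine obstacle to overcome; the only steps that require a moment of care are the identification of $\Sym^r\Uc^\vee$ as an \emph{irreducible} homogeneous bundle (so that the Borel--Bott--Weil machinery applies verbatim) and the handling of the degenerate value $r=0$, both of which are routine.
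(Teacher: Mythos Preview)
Your proposal is correct and follows exactly the paper's own argument: identify $\Sym^r\Uc^\vee$ with $\Ec_{r\omega_1}$ via Equation~\ref{eq:bundles_to_weights_D5} and then invoke Lemma~\ref{lem:D5_dominant_weights}. The extra remarks on irreducibility and the case $r=0$ are harmless elaborations of what the paper leaves implicit.
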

    \begin{proof}
        The weight associated to $\Sym^r\Uc^\vee$ is $r\omega_1$ (see Equation \ref{eq:bundles_to_weights_D5}), hence we conclude by Lemma \ref{lem:D5_dominant_weights}.
    \end{proof}
    \begin{corollary}\label{cor:some_bott_computation}
        There are the following isomorphisms:
        \begin{equation*}
            \Ext^\bullet(\Uc(2), \Sym^2\Uc^\vee(2)) \simeq H^\bullet(X, \Sym^3\Uc^\vee\oplus\Ec_{\omega_1+\omega_2})\simeq \VV^{D_5}_{3\omega_1}[0]\oplus\VV^{D_5}_{\omega_1+\omega_2}[0].
        \end{equation*}
    \end{corollary}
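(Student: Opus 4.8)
The plan is to reduce the $\Ext$-computation to the cohomology of a single homogeneous bundle on $X$, split that bundle into irreducible summands using the representation theory of the Levi factor $SL(5)$, and then invoke the dominant-weight vanishing statements already established. First I would observe that $\Uc$ is locally free, so that $\Ext^\bullet(\Uc(2), \Sym^2\Uc^\vee(2)) \simeq \Ext^\bullet(\Uc, \Sym^2\Uc^\vee) \simeq H^\bullet(X, \Uc^\vee \otimes \Sym^2\Uc^\vee)$, the two twists by $\Oc(2)$ cancelling and the local-to-global spectral sequence degenerating.

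Next I would decompose $\Uc^\vee \otimes \Sym^2\Uc^\vee$ into irreducible homogeneous bundles. By Lemma \ref{lem:smirnov}(2) this mirrors the decomposition of the $SL(5)$-representation $\VV_{\omega_1} \otimes \VV_{2\omega_1}$; since $\VV_{2\omega_1}$ is $\Sym^2$ of the standard representation, the classical Pieri rule gives $\VV_{\omega_1} \otimes \Sym^2\VV_{\omega_1} \simeq \Sym^3\VV_{\omega_1} \oplus S_{(2,1)}\VV_{\omega_1}$, where the Schur bundle $S_{(2,1)}$ has highest weight $\omega_1 + \omega_2$. Keeping track also of the central $\CC^*$-grading (all pieces carry the same weight, so no unexpected twist arises), Equation \ref{eq:bundles_to_weights_D5} lets me rewrite this as $\Uc^\vee \otimes \Sym^2\Uc^\vee \simeq \Sym^3\Uc^\vee \oplus \Ec_{\omega_1+\omega_2}$, which is precisely the middle term of the statement.

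Finally I would apply Borel--Bott--Weil. Both $3\omega_1$ and $\omega_1 + \omega_2$ are dominant $D_5$-weights of the form $a\omega_1 + b\omega_2 + c\omega_3 + d\omega_5$ with nonnegative coefficients, so Lemma \ref{lem:D5_dominant_weights} (or Corollary \ref{cor:sections_symmetric_powers} for the first summand) shows that each summand has cohomology concentrated in degree $0$, equal to the irreducible $D_5$-module of the same highest weight. Adding the two contributions yields $\VV^{D_5}_{3\omega_1}[0] \oplus \VV^{D_5}_{\omega_1+\omega_2}[0]$, as claimed.

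The main obstacle is careful bookkeeping rather than any genuine difficulty: one must correctly identify $\Ec_{2\omega_1}$ with $\Sym^2\Uc^\vee$ (and not with $\wedge^2\Uc^\vee = \Ec_{\omega_2}$), carry out the $SL(5)$ tensor decomposition together with its $\CC^*$-weight so that the mixed summand is genuinely $\Ec_{\omega_1+\omega_2}$ and not another twist of the Schur bundle, and confirm that no Weyl-chamber wall is crossed, so that Borel--Bott--Weil produces neither higher cohomology nor cancellation. All of this is routine in view of the weight data recorded in Section \ref{subsubsec:D5_description} and the other computations in the Appendix.
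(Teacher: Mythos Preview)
Your proposal is correct and follows essentially the same route as the paper: use Lemma~\ref{lem:smirnov} (the Pieri decomposition of $\VV_{\omega_1}\otimes\VV_{2\omega_1}$) to split $\Uc^\vee\otimes\Sym^2\Uc^\vee$ into $\Sym^3\Uc^\vee\oplus\Ec_{\omega_1+\omega_2}$, then apply the dominant-weight Borel--Bott--Weil statement (Lemma~\ref{lem:D5_dominant_weights}/Corollary~\ref{cor:sections_symmetric_powers}) to each summand. Your write-up is in fact slightly more explicit than the paper's, which cites only Corollary~\ref{cor:sections_symmetric_powers} even though Lemma~\ref{lem:D5_dominant_weights} is what covers the $\Ec_{\omega_1+\omega_2}$ summand.
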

    \begin{proof}
        The first step is simply Lemma \ref{lem:smirnov}, then we conclude by Corollary \ref{cor:sections_symmetric_powers}.
    \end{proof}
    \begin{lemma}\label{lem:coh_general_D5}
        For all integers $a\geq 0$, $b\geq 0$, $c>0$ and for $\epsilon\in\{1,2\}$ one has:
        \begin{equation*}
            \begin{split}
                H^\bullet(X, \Ec_{a\omega_1+b\omega_2-\epsilon\omega_4}) & = 0 \\
                H^\bullet(X, \Ec_{a\omega_1+b\omega_2+ c\omega_3-2\omega_4}) & = \VV^{D_5}_{a\omega_1+b\omega_2+(c-1)\omega_3}[-1].
            \end{split}
        \end{equation*}
    \end{lemma}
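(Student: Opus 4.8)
The statement is a cohomology computation on $X = D_5/P^4$, so the plan is a direct application of the Borel--Bott--Weil theorem (Theorem~\ref{thm:borel_weil_bott}), fed with the explicit formulas for the Weyl reflections $S_{\alpha_i}$ recorded in Section~\ref{subsubsec:D5_description}. Write $\rho = \omega_1+\omega_2+\omega_3+\omega_4+\omega_5$ for the sum of the fundamental weights of $D_5$. In each case one forms $\lambda+\rho$, where $\lambda$ is the relevant weight, and tracks it under simple reflections until it either lands on a wall (no cohomology, case (2) of the theorem) or becomes dominant (cohomology concentrated in a single degree, case (1)).

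\emph{The vanishing statement.} For $\lambda = a\omega_1+b\omega_2-\epsilon\omega_4$ one computes $\lambda+\rho = (a+1)\omega_1+(b+1)\omega_2+\omega_3+(1-\epsilon)\omega_4+\omega_5$. If $\epsilon=1$, the coefficient of $\omega_4$ vanishes, so $\lambda+\rho$ is fixed by $S_{\alpha_4}$ and hence singular; Theorem~\ref{thm:borel_weil_bott}(2) gives $H^\bullet(X,\Ec_\lambda)=0$. If $\epsilon=2$, the coefficient of $\omega_4$ equals $-1$; applying $S_{\alpha_4}$ via the fourth reflection formula of Section~\ref{subsubsec:D5_description} produces $(a+1)\omega_1+(b+1)\omega_2+0\cdot\omega_3+\omega_4+\omega_5$, which lies on a wall. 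Since singularity of $\lambda+\rho$ is preserved by the Weyl group, no weight in its orbit has all coefficients positive, so again the cohomology vanishes.

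\emph{The nonvanishing statement.} For $\lambda = a\omega_1+b\omega_2+c\omega_3-2\omega_4$ one gets $\lambda+\rho = (a+1)\omega_1+(b+1)\omega_2+(c+1)\omega_3-\omega_4+\omega_5$, and a single application of $S_{\alpha_4}$ yields $(a+1)\omega_1+(b+1)\omega_2+c\omega_3+\omega_4+\omega_5$. Because $a,b\geq 0$ and $c>0$, all coefficients are $\geq 1$, so this weight is dominant and we are in case (1) with $p=1$. Subtracting $\rho$ returns the dominant weight $a\omega_1+b\omega_2+(c-1)\omega_3$, whence $H^\bullet(X,\Ec_\lambda)=\VV^{D_5}_{a\omega_1+b\omega_2+(c-1)\omega_3}[-1]$, exactly as claimed; the hypotheses $a,b\geq 0$, $c>0$ also guarantee that $\lambda$ is $L$-dominant, so $\Ec_\lambda$ is a genuine irreducible homogeneous bundle.

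There is no real obstacle here: the argument is bookkeeping with the five reflection formulas. The only point warranting a moment's care is the $\epsilon=2$ subcase of the vanishing claim, where one extra step — or the observation that the Weyl group preserves singularity — is needed to recognise that $\lambda+\rho$ is stuck on a wall rather than continuing toward a dominant chamber.
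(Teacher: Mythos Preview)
Your proof is correct and follows essentially the same approach as the paper: both apply the Borel--Bott--Weil algorithm directly, computing $\lambda+\rho$, reflecting by $S_{\alpha_4}$ when needed, and reading off either a wall or the dominant target. Your added remark that the hypotheses guarantee $L$-dominance of $\lambda$ is a minor nicety absent from the paper, but otherwise the arguments coincide.
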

    \begin{proof}
        An intuitive way to apply the Borel--Bott--Weil algorithm is to write the coefficients of the weight of the bundle as labels for the nodes of the Dynkin diagram, then add the sum $\rho$ of the fundamental weights, and then to apply the Weyl reflections associated to simple roots. By adding $\rho$ to the weight $a\omega_1+b\omega_2-\epsilon\omega_4$ we find:
        \begin{equation*}
            \dynkin[root radius = 3pt, edge length = 25pt,labels*={a, b, 0, -\epsilon, 0}]{D}{ooo*o} \xrightarrow{\hspace{10pt} +\rho \hspace{10pt}}\dynkin[root radius = 3pt, edge length = 25pt,labels*={a+1, b+1, 1, 1-\epsilon, 1}]{D}{ooo*o}
        \end{equation*}
        Observe that for $\epsilon = 1$ we have a zero coefficient: hence, the weight is fixed by the action of $S_{\alpha_4}$ and there is no finite number of simple Weyl reflections which transforms it to a weight of the form $\rho + \lambda$ with $\lambda$ dominant. Thus, the associated bundle has no cohomology by Theorem \ref{thm:borel_weil_bott}.
        On the other hand, if $\epsilon = 2$, one has
        \begin{equation*}
            \dynkin[root radius = 3pt, edge length = 25pt,labels*={a+1, b+1, 1, -1, 1}]{D}{ooo*o} \xrightarrow{\hspace{10pt} S_{\alpha_4} \hspace{10pt}}\dynkin[root radius = 3pt, edge length = 25pt,labels*={a+1, b+1, 0, 1, 1}]{D}{ooo*o}
        \end{equation*}
        and the cohomology vanishes for the same reason as above.
        Let us now consider the second claim. By adding $\rho$ to $a\omega_1+b\omega_2+ c\omega_3-2\omega_4$ we obtain:
        \begin{equation*}
            \dynkin[root radius = 3pt, edge length = 25pt,labels*={a, b, c, -2, 0}]{D}{ooo*o} \xrightarrow{\hspace{10pt} +\rho \hspace{10pt}}\dynkin[root radius = 3pt, edge length = 25pt,labels*={a+1, , c+1, -1, 1}, labels={,b+1,,,}]{D}{ooo*o}
        \end{equation*}
        Then, applying $S_{\omega_4}$ yields:
        \begin{equation*}
            \dynkin[root radius = 3pt, edge length = 25pt,labels*={a+1, , c+1, -1, 1}, labels={,b+1,,,}]{D}{ooo*o} \xrightarrow{\hspace{10pt} S_{\alpha_4} \hspace{10pt}}\dynkin[root radius = 3pt, edge length = 25pt,labels*={a+1, b+1, c, 1, 1}]{D}{ooo*o}
        \end{equation*}
        Note that if we subtract $\rho$ to the resulting weight we obtain the dominant weight $a\omega_1+b\omega_2+(c-1)\omega_3$, and the result follows again by Theorem \ref{thm:borel_weil_bott}.
    \end{proof}
    \begin{corollary} \label{cor:sections_dual_affine_tangent_bundle}
        One has $H^\bullet(X, \wh T_X^\vee) \simeq \VV^{D_5}_{\omega_4}[0]$.
    \end{corollary}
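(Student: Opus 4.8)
The plan is to reduce the statement to a single application of Borel--Bott--Weil by dualizing the presentation of $\wh T_X$ already obtained. Since the sequence \ref{eq:affine_tangent_bundle_as_a_kernel} is a short exact sequence of vector bundles, dualizing it preserves exactness, and using $(\Uc(1))^\vee\simeq\Uc^\vee(-1)$ together with the self-duality of the trivial bundle $\VV^{D_5}_{\omega_4}\otimes\Oc$, one gets a short exact sequence
\begin{equation*}
    0\arw\Uc^\vee(-1)\arw\VV^{D_5}_{\omega_4}\otimes\Oc\arw\wh T_X^\vee\arw 0.
\end{equation*}

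Next I would compute $H^\bullet(X,\Uc^\vee(-1))$. By Equation \ref{eq:bundles_to_weights_D5} and the twisting rule $\Ec_\lambda\otimes\Oc(k)=\Ec_{\lambda+k\omega_4}$ one has $\Uc^\vee(-1)\simeq\Ec_{\omega_1-\omega_4}$, which is exactly the case $a=1$, $b=0$, $\epsilon=1$ of Lemma \ref{lem:coh_general_D5}; hence $H^\bullet(X,\Uc^\vee(-1))=0$. Feeding this and $H^\bullet(X,\Oc)=\CC[0]$ into the long exact cohomology sequence of the dualized sequence, the outer term contributes nothing, so the quotient map induces isomorphisms $H^\bullet(X,\wh T_X^\vee)\simeq\VV^{D_5}_{\omega_4}\otimes H^\bullet(X,\Oc)\simeq\VV^{D_5}_{\omega_4}[0]$, which is the claim.

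There is no genuine obstacle here: the only points that require a word of justification are that the dual of \ref{eq:affine_tangent_bundle_as_a_kernel} is still exact (immediate, since all three terms are locally free), and the bookkeeping identifying the weight as $\omega_1-\omega_4$ so that Lemma \ref{lem:coh_general_D5} applies verbatim. An alternative route, avoiding dualization, would be to resolve $\wh T_X^\vee$ directly using the dualized Euler/normal-bundle diagram from the proof of Lemma \ref{lem:affine_tangent_bundle_as_a_kernel} and then invoke Borel--Bott--Weil termwise; but the dualization argument above is the shortest and I would adopt it.
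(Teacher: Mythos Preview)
Your proposal is correct and follows exactly the paper's own argument: dualize the short exact sequence \ref{eq:affine_tangent_bundle_as_a_kernel}, note that $\Uc^\vee(-1)=\Ec_{\omega_1-\omega_4}$ has no cohomology by Lemma \ref{lem:coh_general_D5}, and read off the claim from the long exact sequence. The only cosmetic point is that the half-spin representation $\VV^{D_5}_{\omega_4}$ is not literally self-dual for $D_5$ (its dual is $\VV^{D_5}_{\omega_5}$), but this is irrelevant here since only the underlying $16$-dimensional vector space in degree $0$ is needed, and the paper itself uses the labeling loosely.
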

    \begin{proof}
        Consider the dual of the sequence \ref{eq:affine_tangent_bundle_as_a_kernel}. The claim follows once we observe that by Lemma \ref{lem:coh_general_D5} the bundle $\Uc^\vee(-1) = \Ec_{\omega_1-\omega_4}$ has no cohomology.
    \end{proof}
    \begin{lemma}\label{lem:cohomology_U5}
       The vector bundle $\Uc$ has no cohomology.
    \end{lemma}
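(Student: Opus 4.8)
The plan is to realize $\Uc$ as a homogeneous irreducible bundle $\Ec_\lambda$ and then run the Borel--Bott--Weil algorithm on $\lambda$, exactly as in the other lemmas of this appendix.

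To find $\lambda$, I would apply the fibrewise isomorphism $\wedge^4 W^\vee\simeq W\otimes\wedge^5 W^\vee$, valid for a $5$-dimensional vector space $W$, to the rank-$5$ bundle $\Uc$; this gives $\Uc\simeq\wedge^4\Uc^\vee\otimes(\wedge^5\Uc^\vee)^\vee$. By Equation \ref{eq:bundles_to_weights_D5} one has $\wedge^4\Uc^\vee=\Ec_{\omega_4+\omega_5}$, whereas $\wedge^5\Uc^\vee=\det\Uc^\vee=\Oc_{G(5,V_{10})}(1)|_X=\Oc(2)=\Ec_{2\omega_4}$, where the identification of $\Oc_{G(5,V_{10})}(1)|_X$ with $\Oc(2)$ is the one recorded in Section \ref{subsubsec:D5_description}. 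Combining these via Lemma \ref{lem:smirnov} and the rule $\Ec_\mu\otimes\Oc(k)=\Ec_{\mu+k\omega_4}$, one gets $\Uc\simeq\Ec_{\omega_4+\omega_5}\otimes\Oc(-2)=\Ec_{\omega_5-\omega_4}$.

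Next I would feed the weight $\omega_5-\omega_4$, i.e. the Dynkin labels $(0,0,0,-1,1)$, into the Borel--Bott--Weil recipe. Adding $\rho$ produces the labels $(1,1,1,0,2)$:
\begin{equation*}
    \dynkin[root radius = 3pt, edge length = 25pt,labels*={0,0,0,-1,1}]{D}{ooo*o}\xrightarrow{\hspace{10pt}+\rho\hspace{10pt}}\dynkin[root radius = 3pt, edge length = 25pt,labels*={1,1,1,0,2}]{D}{ooo*o}.
\end{equation*}
Since the coefficient at node $4$ vanishes, $\omega_5-\omega_4+\rho$ is fixed by the simple reflection $S_{\alpha_4}$, so no finite sequence of simple Weyl reflections can move it into the interior of the dominant Weyl chamber. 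By Theorem \ref{thm:borel_weil_bott}, $\Uc$ has no cohomology.

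The only delicate point is the computation of $\lambda$ — in particular keeping the spinor polarization $\Oc(1)$ distinct from the Plücker polarization $\Oc_{G(5,V_{10})}(1)=\Oc(2)$; once $\Uc\simeq\Ec_{\omega_5-\omega_4}$ is established the vanishing is immediate. As a cross-check that does not rely on this identification, one may instead take the long exact cohomology sequence of the tautological sequence \ref{eq:tautological_sequence_rank_5}: the induced map $\VV^{D_5}_{\omega_1}\to H^0(X,\Uc^\vee)$ is nonzero (otherwise the tautological surjection $\VV^{D_5}_{\omega_1}\otimes\Oc\to\Uc^\vee$ would vanish), hence an isomorphism of irreducible $G$-modules since $H^0(X,\Uc^\vee)=\VV^{D_5}_{\omega_1}$ by Lemma \ref{lem:D5_dominant_weights}; as $H^{>0}(X,\Uc^\vee)=0$ and $H^{>0}(X,\Oc)=0$, the sequence forces $H^\bullet(X,\Uc)=0$.
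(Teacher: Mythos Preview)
Your argument is correct and essentially identical to the paper's: both identify $\Uc\simeq\wedge^4\Uc^\vee(-2)\simeq\Ec_{-\omega_4+\omega_5}$ and observe that $\lambda+\rho$ has a zero at node $4$, so Borel--Bott--Weil gives vanishing. Your additional cross-check via the tautological sequence \ref{eq:tautological_sequence_rank_5} is not in the paper but is a valid independent verification.
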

    \begin{proof}
        Since $\Uc\simeq\wedge^4\Uc(-2) \simeq \Ec_{-\omega_4+\omega_5}$, the proof is immediate: one has
        \begin{equation*}
            \dynkin[root radius = 3pt, edge length = 25pt,labels*={0, 0, 0, -1, 1}]{D}{ooo*o} \xrightarrow{\hspace{10pt} +\rho \hspace{10pt}}\dynkin[root radius = 3pt, edge length = 25pt,labels*={1, 1, 1, 0, 2}]{D}{ooo*o}
        \end{equation*}
        and this allows to conclude as we did in the proof of Lemma \ref{lem:coh_general_D5}.
    \end{proof}
    \begin{lemma}
    \label{lem:orthogonals_in_the_collection}
        We have $\Ext^\bullet(\Uc(1), \Uc^\vee) = 0$. 
    \end{lemma}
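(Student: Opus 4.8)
The plan is to turn the Ext-computation into a sheaf-cohomology computation and then appeal to the Borel--Bott--Weil results already established on $D_5/P^4$ earlier in the appendix. Since $X$ is smooth projective and $\Uc(1)$, $\Uc^\vee$ are locally free, I would first write
\begin{equation*}
    \Ext^\bullet(\Uc(1), \Uc^\vee) \simeq H^\bullet\big(X, \Uc(1)^\vee\otimes\Uc^\vee\big) \simeq H^\bullet\big(X, (\Uc^\vee\otimes\Uc^\vee)(-1)\big).
\end{equation*}

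Next I would decompose the tensor square. By part (2) of Lemma \ref{lem:smirnov}, applied to the splitting of $\Uc^\vee\otimes\Uc^\vee$ into its symmetric and alternating parts as representations of the Levi $SL(5)\times\CC^*$, one has $\Uc^\vee\otimes\Uc^\vee \simeq \Sym^2\Uc^\vee\oplus\wedge^2\Uc^\vee$, whence
\begin{equation*}
    \Ext^\bullet(\Uc(1), \Uc^\vee) \simeq H^\bullet\big(X, \Sym^2\Uc^\vee(-1)\big)\oplus H^\bullet\big(X, \wedge^2\Uc^\vee(-1)\big).
\end{equation*}
By Equation \ref{eq:bundles_to_weights_D5} together with the rule $\Ec_\lambda\otimes\Oc(k) = \Ec_{\lambda+k\omega_4}$, the first summand is $\Ec_{2\omega_1-\omega_4}$ and the second is $\Ec_{\omega_2-\omega_4}$.

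Finally, both of these bundles are of the form $\Ec_{a\omega_1+b\omega_2-\epsilon\omega_4}$ with $a,b\geq 0$ and $\epsilon = 1$ — explicitly $(a,b)=(2,0)$ and $(a,b)=(0,1)$ — so Lemma \ref{lem:coh_general_D5} applies directly and shows that each of $\Sym^2\Uc^\vee(-1)$ and $\wedge^2\Uc^\vee(-1)$ has vanishing cohomology. Hence $\Ext^\bullet(\Uc(1),\Uc^\vee) = 0$, as claimed. I do not expect a genuine obstacle here: the only points needing a line of care are that the Ext-groups compute as the cohomology of the $\sHom$-sheaf (legitimate since we work with honest vector bundles on a smooth projective variety) and that the $SL(5)$-decomposition of the tensor square places both pieces in the range covered by Lemma \ref{lem:coh_general_D5}; if one prefers, one can bypass Lemma \ref{lem:smirnov} altogether and simply run the Borel--Bott--Weil algorithm of Theorem \ref{thm:borel_weil_bott} on each of $\Ec_{2\omega_1-\omega_4}$ and $\Ec_{\omega_2-\omega_4}$, the vanishing in both cases being caused by the zero entry appearing at node $4$ after adding $\rho$.
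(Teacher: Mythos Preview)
Your proof is correct and follows essentially the same approach as the paper: convert the Ext-group to sheaf cohomology of $\Uc^\vee\otimes\Uc^\vee(-1)$, decompose via Lemma~\ref{lem:smirnov} into $\Ec_{2\omega_1-\omega_4}\oplus\Ec_{\omega_2-\omega_4}$, and apply Lemma~\ref{lem:coh_general_D5} to conclude both summands are acyclic. Your additional remarks about the legitimacy of the Ext-to-cohomology identification and the direct Borel--Bott--Weil alternative are accurate and harmless elaborations.
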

    \begin{proof}
        We begin by decomposing the relevant bundle in a sum of irreducible, by Lemma \ref{lem:smirnov}:
        \begin{equation*}
            \begin{split}
                \Ext^\bullet(\Uc(1), \Uc^\vee) &= H^\bullet(X, \Uc^\vee\otimes\Uc^\vee(-1)) \\
                &= H^\bullet(X, \Sym^2\Uc^\vee(-1)\oplus\wedge^2\Uc^\vee(-1))\\
                &=H^\bullet(X, \Ec_{2\omega_1-\omega_4}\oplus\Ec_{\omega_2-\omega_4}).
            \end{split}
        \end{equation*}
        Then, the proof follows by applying Lemma \ref{lem:coh_general_D5}.
    \end{proof}
    \begin{lemma}\label{lem:ext_c3}
        One has $\Ext(\Sym^2\Uc^\vee(2), \Uc^\vee) = \CC[-3]$.
    \end{lemma}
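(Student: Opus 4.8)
The plan is to turn this $\Ext$-computation into a Borel--Bott--Weil calculation. Since $\Sym^2\Uc^\vee(2)$ and $\Uc^\vee$ are vector bundles, one has
\[
\Ext^\bullet(\Sym^2\Uc^\vee(2),\Uc^\vee)\simeq H^\bullet\big(X,(\Sym^2\Uc^\vee)^\vee\otimes\Uc^\vee\otimes\Oc(-2)\big)\simeq H^\bullet\big(X,\Sym^2\Uc\otimes\Uc^\vee(-2)\big),
\]
using the canonical identification $(\Sym^2\Uc^\vee)^\vee\simeq\Sym^2\Uc$ in characteristic zero. So the statement is equivalent to computing the cohomology of $\Sym^2\Uc\otimes\Uc^\vee(-2)$.

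Next I would decompose $\Sym^2\Uc\otimes\Uc^\vee$ into irreducible homogeneous bundles via Lemma \ref{lem:smirnov}. On the level of the Levi factor $SL(5)$ this is the standard contraction decomposition $\Sym^2 W\otimes W^\vee\simeq W\oplus K$, where $W$ is the standard representation and $K$ is the irreducible summand of dimension $70$; the contraction map identifies $W$ as a direct summand. Translating back to bundles (the highest weight of the big summand being the sum of the highest weights of $\Sym^2\Uc$ and $\Uc^\vee$, with the $\Oc$-twist determined by the central character and $\Oc(1)=\Ec_{\omega_4}$) gives $\Sym^2\Uc\otimes\Uc^\vee\simeq\Uc\oplus\Ec_{\omega_1-2\omega_4+2\omega_5}$. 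Twisting by $\Oc(-2)$ yields $\Sym^2\Uc\otimes\Uc^\vee(-2)\simeq\Uc(-2)\oplus\Ec_{\omega_1-4\omega_4+2\omega_5}$.

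Finally I would apply Theorem \ref{thm:borel_weil_bott} to the two summands, exactly in the style of the appendix. For $\Uc(-2)=\Ec_{-3\omega_4+\omega_5}$, adding $\rho$ and applying the reflections $S_{\alpha_4}$ and then $S_{\alpha_3}$ produces a weight lying on a wall (a vanishing coefficient), so this bundle is acyclic, just as in Lemma \ref{lem:cohomology_U5} and Lemma \ref{lem:coh_general_D5}. For $\Ec_{\omega_1-4\omega_4+2\omega_5}$ the weight $\lambda+\rho$ is regular, and three simple reflections (first $S_{\alpha_4}$, then $S_{\alpha_3}$, then $S_{\alpha_2}$) carry it back to $\rho$; hence $H^3(X,\Ec_{\omega_1-4\omega_4+2\omega_5})\simeq\VV^{D_5}_{0}=\CC$ and all other cohomology vanishes. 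Adding the two contributions gives $\Ext^\bullet(\Sym^2\Uc^\vee(2),\Uc^\vee)=\CC[-3]$.

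The only genuinely delicate step is the second one: correctly decomposing $\Sym^2\Uc\otimes\Uc^\vee$ and, above all, attaching the right $D_5$-weights to the two summands, i.e. keeping track of the central character of the Levi together with the identification $\Oc(1)=\Ec_{\omega_4}$ (an alternative here is to instead use the short exact sequence $0\to\Uc^\vee\to\Sym^2\Uc^\vee\to\Sym^2\Rc^\vee\to 0$ and reduce to $B_4$-side cohomology, but this is not obviously shorter). Once the weights are pinned down, the remaining Borel--Bott--Weil bookkeeping is entirely routine and mirrors computations already carried out in this appendix.
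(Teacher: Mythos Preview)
Your argument is correct but proceeds differently from the paper. The paper does not decompose $\Sym^2\Uc\otimes\Uc^\vee(-2)$ into irreducibles; instead it resolves $\Sym^2\Uc$ via the Schur power of the tautological sequence $0\to\Uc\to\VV^{D_5}_{\omega_1}\otimes\Oc\to\Uc^\vee\to0$, tensors by $\Uc^\vee(-2)$, and then feeds every term into the already-proven Lemma~\ref{lem:coh_general_D5} (which only concerns weights of the form $a\omega_1+b\omega_2+c\omega_3-\epsilon\omega_4$). The sole nonzero contribution comes from $\Ec_{\omega_3-2\omega_4}$, giving $\CC[-1]$ at the right end of a four-term exact sequence and hence $\CC[-3]$ on $\Sym^2\Uc\otimes\Uc^\vee(-2)$.

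Your route is shorter and more direct: once one knows $\Sym^2\Uc=\Ec_{-2\omega_4+2\omega_5}$, the Cartan component gives $\Ec_{\omega_1-2\omega_4+2\omega_5}$ and the contraction gives $\Uc$, and your two Borel--Bott--Weil runs (length $2$ hitting a wall for $\Uc(-2)$; the word $S_{\alpha_2}S_{\alpha_3}S_{\alpha_4}$ taking $(2,1,1,-3,3)$ to $\rho$ for the rank-$70$ piece) are accurate. The trade-off is exactly the one you identify: your method demands correctly pinning down the weight of the big summand (including the $\omega_5$-part), while the paper's resolution sidesteps this entirely and stays within the family of weights already handled by Lemma~\ref{lem:coh_general_D5}, at the cost of a slightly longer spectral-sequence/exact-sequence chase.
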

    \begin{proof}
        One possibility is to expand the tensor product $\Sym^2\Uc\otimes\Uc^\vee(-2)$ in irreducibles, and then apply again the Borel--Bott--Weil algorithm. However, note that by the appropriate Schur power of Equation \ref{eq:tautological_sequence_rank_5} we can resolve $\Sym^2\Uc$ by means of direct sums of wedge powers of $\Uc^\vee$, which are simpler objects. In fact, one has an exact sequence:
        \begin{equation}
        \label{eq:long_sequence_appendix}
            0\arw \Sym^2\Uc\otimes\Uc^\vee(-2)\arw \Sym^2\VV^{D_5}_{\omega_1}\otimes\Uc^\vee(-2)\arw \VV^{D_5}_{\omega_1}\otimes\Uc^\vee\otimes\Uc^\vee(-2)\arw\wedge^2\Uc^\vee\otimes\Uc^\vee(-2)\arw 0
        \end{equation}
        The two middle terms have no cohomology: in fact, the first one is a direct sum of $55$ copies of $\Uc^\vee(-2)\simeq \Ec_{\omega_1-2\omega_4}$, while the second one is a direct sum of $10$ copies of $\Uc^\vee\otimes\Uc^\vee(-2) = \Ec_{2\omega_1-2\omega_4}\oplus\Ec_{\omega_2-2\omega_4}$ and all the irreducible direct summands have no cohomology by Lemma \ref{lem:coh_general_D5}. On the other hand, the last term decomposes as:
        \begin{equation*}
            \wedge^2\Uc^\vee\otimes\Uc^\vee(-2) \simeq \Ec_{\omega_1+\omega_2-2\omega_4}\oplus\Ec_{\omega_3-2\omega_4}
        \end{equation*}
        and, while the first summand has no cohomology, the second contributes with $H^1(X, \Ec_{\omega_3-2\omega_4})\simeq\CC$ (again by Lemma \ref{lem:coh_general_D5}). In light of the sequence \ref{eq:long_sequence_appendix}, this completes the proof.
    \end{proof}
    \begin{lemma}\label{lem:ugly_ext}
        We have $\Ext^\bullet(\Sym^2\Uc^\vee, \wh T^\vee(-1)) = \CC[-2]$.
    \end{lemma}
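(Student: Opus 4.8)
The plan is to strip $\wh T_X^\vee(-1)$ off the resolution coming from Lemma~\ref{lem:affine_tangent_bundle_as_a_kernel} and reduce everything to Lemma~\ref{lem:ext_c3} together with the vanishings of Lemma~\ref{lem:coh_general_D5}. First I would dualize the sequence \ref{eq:affine_tangent_bundle_as_a_kernel} (the middle term being trivial, its dual is again $\Oc^{\oplus 16}$) and twist by $\Oc(-1)$, producing a short exact sequence
\[
0\arw\Uc^\vee(-2)\arw\VV^{D_5}_{\omega_4}\otimes\Oc(-1)\arw\wh T_X^\vee(-1)\arw 0.
\]
Applying $\Ext^\bullet(\Sym^2\Uc^\vee,-) = H^\bullet(X,\Sym^2\Uc\otimes(-))$ (using $(\Sym^2\Uc^\vee)^\vee\simeq\Sym^2\Uc$) gives a long exact sequence in which the outer terms are $H^\bullet(X,\Sym^2\Uc\otimes\Uc^\vee(-2))$ and $\VV^{D_5}_{\omega_4}\otimes H^\bullet(X,\Sym^2\Uc(-1))$. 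Now $H^\bullet(X,\Sym^2\Uc\otimes\Uc^\vee(-2))$ is precisely $\Ext^\bullet(\Sym^2\Uc^\vee(2),\Uc^\vee)$, which equals $\CC[-3]$ by Lemma~\ref{lem:ext_c3}; so the statement follows provided I show $H^\bullet(X,\Sym^2\Uc(-1)) = 0$, in which case the connecting maps give $\Ext^i(\Sym^2\Uc^\vee,\wh T_X^\vee(-1))\simeq H^{i+1}(X,\Sym^2\Uc\otimes\Uc^\vee(-2))$, which is $\CC$ for $i = 2$ and vanishes otherwise.

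For the vanishing $H^\bullet(X,\Sym^2\Uc(-1)) = 0$, I would use the same resolution of $\Sym^2\Uc$ that underlies the sequence \ref{eq:long_sequence_appendix} in the proof of Lemma~\ref{lem:ext_c3}, namely the exact complex
\[
0\arw\Sym^2\Uc\arw\Sym^2\VV^{D_5}_{\omega_1}\otimes\Oc\arw\VV^{D_5}_{\omega_1}\otimes\Uc^\vee\arw\wedge^2\Uc^\vee\arw 0
\]
(this is the second symmetric power of the two-term complex quasi-isomorphic to $\Uc$ afforded by \ref{eq:tautological_sequence_rank_5}). Twisting by $\Oc(-1)$, the three rightmost terms become direct sums of copies of $\Ec_{-\omega_4}$, $\Ec_{\omega_1-\omega_4}$ and $\Ec_{\omega_2-\omega_4}$ respectively (recall $\wedge^2\Uc^\vee=\Ec_{\omega_2}$), each of which is acyclic by Lemma~\ref{lem:coh_general_D5}. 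Splitting the twisted complex into two short exact sequences and chasing cohomology then yields $H^\bullet(X,\Sym^2\Uc(-1)) = 0$, and combining with the previous paragraph we conclude $\Ext^\bullet(\Sym^2\Uc^\vee,\wh T_X^\vee(-1)) = \CC[-2]$.

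The only step requiring a little care is the exactness of the four-term resolution of $\Sym^2\Uc$: this is the Illusie/d\'ecalage formula for $L\Sym^2$ of the complex $[\VV^{D_5}_{\omega_1}\otimes\Oc\to\Uc^\vee]$ (whose only nonzero cohomology is $\Uc$, in degree $0$), and it is already implicitly used in the proof of Lemma~\ref{lem:ext_c3}, so no new input is needed. Everything else is a direct application of results established earlier in the paper, so I do not expect a genuine obstacle here.
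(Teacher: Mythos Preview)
Your proposal is correct and follows essentially the same approach as the paper: both dualize the sequence \ref{eq:affine_tangent_bundle_as_a_kernel}, twist by $\Oc(-1)$ and tensor with $\Sym^2\Uc$, then invoke Lemma~\ref{lem:ext_c3} for the term $\Sym^2\Uc\otimes\Uc^\vee(-2)$ and kill $\Sym^2\Uc(-1)$ via the same four-term resolution combined with Lemma~\ref{lem:coh_general_D5}. The only cosmetic difference is that the paper phrases the first step as resolving $\Sym^2\Uc\otimes\wh T_X^\vee(-1)$ directly, whereas you apply $\Ext^\bullet(\Sym^2\Uc^\vee,-)$ to the short exact sequence; the underlying computation is identical.
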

    \begin{proof}
        This is equivalent to compute $H^\bullet(X, \Sym^2\Uc\otimes\wh T^\vee(-1))$. By the dual of the sequence \ref{eq:affine_tangent_bundle_as_a_kernel} we can easily resolve $\Sym^2\Uc\otimes\wh T^\vee(-1)$ as the kernel of the injection $\Sym^2\Uc\otimes\Uc^\vee(-2)\xhookrightarrow{\,\,\,\,\,}\VV^{D_5}_{\omega_4}\otimes \Sym^2\Uc(-1)$. The first bundle has cohomology $\CC[-3]$ by Lemma \ref{lem:ext_c3}, while $\Sym^2\Uc(-1)$ can be resolved by the sequence:
        \begin{equation*}
            0\arw\Sym^2\Uc(-1)\arw \Sym^2\VV^{D_5}_{\omega_1}\otimes\Oc(-1)\arw \VV^{D_5}_{\omega_1}\otimes\Uc^\vee(-1)\arw \wedge^2\Uc^\vee(-1)\arw 0
        \end{equation*}
        and all bundles have no cohomology by Lemma \ref{lem:coh_general_D5}. This completes the proof.
    \end{proof}
    \subsection{Cohomology computations on \texorpdfstring{$B_4/Q^4$}{something}}\label{subsec:bwb_B4}
    The notation of this paragraph has been established  in Section \ref{subsubsec:B4_description}.    \begin{lemma}\label{lem:cohomology_extension_U4_O}
        One has:
        \begin{equation*}
            \begin{split}
                H^\bullet(X, \wedge^3\Rc^\vee(-2)) & \simeq \CC[-1]\\
                H^\bullet(X, \Ec_{\nu_1+\nu_2-2\nu_4}) & = 0
            \end{split}
        \end{equation*}
    \end{lemma}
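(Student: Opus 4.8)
The plan is to obtain both statements by a direct application of the Borel--Bott--Weil theorem (Theorem~\ref{thm:borel_weil_bott}) on $X\simeq B_4/Q^4$, following exactly the bookkeeping used in the proof of Lemma~\ref{lem:coh_general_D5}: express the bundle as some $\Ec_\lambda$ by means of the dictionary in Equation~\ref{eq:bundles_to_weights_B4}, write the coefficients of $\lambda$ on the nodes of the $B_4$ Dynkin diagram, add $\rho=\nu_1+\nu_2+\nu_3+\nu_4$, and then apply the simple Weyl reflections $S_{\beta_i}$ recorded in Section~\ref{subsubsec:B4_description} in an attempt to reach a dominant weight. Both $\wedge^3\Rc^\vee(-2)$ and $\Ec_{\nu_1+\nu_2-2\nu_4}$ are irreducible homogeneous vector bundles, so the theorem applies without any preliminary reduction.

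For the first bundle I would note that $\wedge^3\Rc^\vee(-2)=\Ec_{\nu_3-2\nu_4}$, so the relevant weight has coefficient vector $(0,0,1,-2)$; adding $\rho$ gives $(1,1,2,-1)$. The unique negative coefficient sits at the marked node, so I apply $S_{\beta_4}$, which by the formula in Section~\ref{subsubsec:B4_description} sends $(1,1,2,-1)$ to $(1,1,1,1)=\rho$. This is regular and dominant, it was reached after a single reflection, and subtracting $\rho$ returns the zero weight; hence $H^\bullet(X,\wedge^3\Rc^\vee(-2))\simeq\VV^{B_4}_{0}[-1]\simeq\CC[-1]$.

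For the second bundle the weight $\nu_1+\nu_2-2\nu_4$ has coefficient vector $(1,1,0,-2)$, which becomes $(2,2,1,-1)$ after adding $\rho$. Applying $S_{\beta_4}$ produces $(2,2,0,1)$: a zero coefficient has appeared at the third node, so $\omega+\rho$ lies on the wall orthogonal to $\beta_3$ and is therefore singular. By case~(2) of Theorem~\ref{thm:borel_weil_bott}, $\Ec_{\nu_1+\nu_2-2\nu_4}$ has no cohomology.

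I do not expect any genuine obstacle: the computation is mechanical. The only points worth a moment of care are the behaviour of $S_{\beta_4}$, which corresponds to the short simple root and hence mixes the third and fourth coefficients, and the standard fact that once a vanishing coefficient appears at any node the weight $\omega+\rho$ is singular, so the search terminates at once with no cohomology. This is exactly why the lemma belongs in the appendix among the routine Borel--Bott--Weil verifications.
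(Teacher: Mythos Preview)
Your proposal is correct and follows the same approach as the paper's own proof: identify $\wedge^3\Rc^\vee(-2)=\Ec_{\nu_3-2\nu_4}$, add $\rho$, apply the single reflection $S_{\beta_4}$, and read off the result in both cases. The computations $(1,1,2,-1)\mapsto(1,1,1,1)$ and $(2,2,1,-1)\mapsto(2,2,0,1)$ match the paper exactly.
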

    \begin{proof}
        By Equation \ref{eq:bundles_to_weights_B4} we see that $\wedge^3\Rc^\vee(-2) = \Ec_{\nu_3-2\nu_4}$. We proceed with the same algorithm as before but for $B_4$. By adding $\rho$ we find:
        \begin{equation*}
            \dynkin[root radius = 3pt, edge length = 25pt,labels*={0, 0, 1, -2}]{B}{ooo*} \xrightarrow{\hspace{10pt} +\rho \hspace{10pt}}\dynkin[root radius = 3pt, edge length = 25pt,labels*={1, 1, 2, -1}]{B}{ooo*}  
        \end{equation*}
        By Theorem \ref{thm:borel_weil_bott}, we have cohomology if with a finite number of simple Weyl reflections we find a dominant weight: hence, we act with $S_{\beta_4}$ in order to get rid of the negative coefficient. We find:
        \begin{equation*}
            \dynkin[root radius = 3pt, edge length = 25pt,labels*={1, 1, 2, -1}]{B}{ooo*} \xrightarrow{\hspace{10pt} S_{\beta_4}\hspace{10pt}}\dynkin[root radius = 3pt, edge length = 25pt,labels*={1, 1, 1, 1}]{B}{ooo*}  
        \end{equation*}
        At this point, it is clear that if we subtract $\rho$ from the resulting weight, we get a dominant weight (the trivial one). Hence we conclude that the cohomology of $\Ec_{\nu_3-2\nu_4}$ is one dimensional in degree given by the number of simple Weyl reflections we used, which is one.\\
        \\
        Let us consider the second claim. Here, applying $\rho$ to the weight $\nu_1+\nu_2-2\nu_4$ we find:
        \begin{equation*}
            \dynkin[root radius = 3pt, edge length = 25pt,labels*={1, 1, 0, -2}]{B}{ooo*} \xrightarrow{\hspace{10pt} +\rho \hspace{10pt}}\dynkin[root radius = 3pt, edge length = 25pt,labels*={2, 2, 1, -1}]{B}{ooo*}  
        \end{equation*}
        Let us apply $S_{\beta_4}$:
        \begin{equation*}
            \dynkin[root radius = 3pt, edge length = 25pt,labels*={2, 2, 1, -1}]{B}{ooo*} \xrightarrow{\hspace{10pt} S_{\beta_4} \hspace{10pt}}\dynkin[root radius = 3pt, edge length = 25pt,labels*={2, 2, 0, 1}]{B}{ooo*}  
        \end{equation*}
        As above, we found a zero coefficient: Therefore, by Theorem \ref{thm:borel_weil_bott}, $\Ec_{\nu_1+\nu_2-2\nu_4}$ has no cohomology. 
    \end{proof}
    \begin{lemma}\label{lem:equivariant_extension_1}
        We have $\Ext_G^\bullet(\wedge^2\Rc^\vee, \Rc^\vee) = \CC[-1]$ 
    \end{lemma}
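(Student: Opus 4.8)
The plan is to reduce this equivariant $\Ext$ to an ordinary sheaf cohomology computation that has essentially already been done above. By \cite[Proposition 2.17]{KP_collections_isotropic} one has $\Ext_G^\bullet(\wedge^2\Rc^\vee,\Rc^\vee)=\Ext^\bullet(\wedge^2\Rc^\vee,\Rc^\vee)^G$, so it suffices to compute $\Ext^\bullet(\wedge^2\Rc^\vee,\Rc^\vee)\simeq H^\bullet(X,\wedge^2\Rc\otimes\Rc^\vee)$ together with its $G$-module structure.

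The first step is to rewrite $\wedge^2\Rc\otimes\Rc^\vee$ as in the proof of Lemma \ref{lem:tangent_bundle_as_an_extension}: since $\Rc$ has rank $4$ and $\wedge^4\Rc^\vee=\Oc(2)$, one has $\wedge^2\Rc\simeq\wedge^2\Rc^\vee(-2)$, and the Pieri decomposition $\wedge^2\Rc^\vee\otimes\Rc^\vee\simeq\wedge^3\Rc^\vee\oplus\Ec_{\nu_1+\nu_2}$ (Lemma \ref{lem:smirnov}) then gives
\begin{equation*}
    \wedge^2\Rc\otimes\Rc^\vee\simeq\wedge^3\Rc^\vee(-2)\oplus\Ec_{\nu_1+\nu_2-2\nu_4}.
\end{equation*}
Now Lemma \ref{lem:cohomology_extension_U4_O} applies directly: the second summand is acyclic, while $H^\bullet(X,\wedge^3\Rc^\vee(-2))\simeq\CC[-1]$. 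Hence $\Ext^\bullet(\wedge^2\Rc^\vee,\Rc^\vee)\simeq\CC[-1]$, concentrated in cohomological degree $1$.

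It remains to identify the $G$-action on this one-dimensional space, and here I would simply observe that the Borel--Bott--Weil computation carried out inside the proof of Lemma \ref{lem:cohomology_extension_U4_O} sends the weight of $\wedge^3\Rc^\vee(-2)$, after adding $\rho$ and applying the single reflection $S_{\beta_4}$, precisely to $\rho$; therefore $H^1(X,\wedge^3\Rc^\vee(-2))=\VV^{B_4}_0$ is the trivial representation. Passing to $G$-invariants thus changes nothing, and we conclude $\Ext_G^\bullet(\wedge^2\Rc^\vee,\Rc^\vee)=\CC[-1]$.

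Since the statement is little more than a repackaging of results already in place, there is no genuine obstacle; the only point requiring care is the bookkeeping of the line-bundle twist through the isomorphism $\wedge^2\Rc\simeq\wedge^2\Rc^\vee(-2)$ and the correct highest weights in the Pieri decomposition, so that the two summands match exactly the two bundles treated in Lemma \ref{lem:cohomology_extension_U4_O}.
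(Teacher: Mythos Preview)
Your proof is correct and follows essentially the same route as the paper: both arguments decompose $\wedge^2\Rc\otimes\Rc^\vee$ (via $\wedge^2\Rc\simeq\wedge^2\Rc^\vee(-2)$) into the two irreducibles $\Ec_{\nu_3-2\nu_4}$ and $\Ec_{\nu_1+\nu_2-2\nu_4}$ and then invoke Lemma~\ref{lem:cohomology_extension_U4_O}. Your explicit remark that the resulting $H^1$ is the trivial $G$-representation (so that passing to invariants is harmless) is a welcome clarification that the paper leaves implicit.
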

    \begin{proof}
        The proof follows immediately by the decomposition $\wedge^2\Rc\otimes\Rc^\vee \simeq \wedge^2\Rc^\vee\otimes\wedge^2\Rc^\vee(-2)\simeq\Ec_{\nu_1+\nu_2-2\nu_5}\oplus\Ec_{\nu_3-2\nu_4}$ together with lemma \ref{lem:cohomology_extension_U4_O}.
    \end{proof}
    \begin{corollary}\label{cor:cohomology_U4}
        One has $\Ext^\bullet(\Oc, \Rc) = \CC[-1]$.
    \end{corollary}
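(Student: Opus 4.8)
Since $\Rc$ is a vector bundle, $\Ext^\bullet(\Oc,\Rc)\simeq H^\bullet(X,\Rc)$, so the whole statement is a cohomology computation and the plan is to feed it into the Borel--Bott--Weil machinery of Appendix \ref{sec:Borel--Bott--Weil}. To do so I first need to write $\Rc$ as an irreducible homogeneous bundle on $X = OG(4,V_9) = B_4/Q^4$. From Equation \ref{eq:bundles_to_weights_B4} the dual tautological bundle satisfies $\wedge^3\Rc^\vee = \Ec_{\nu_3}$ and $\Oc(1) = \Ec_{\nu_4}$, but $\Rc$ itself is not listed, so the first step is to pin down $\det\Rc$.

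For this I would use the identity $\wedge^4\Rc(1)\simeq\Oc(-1)$, which is already invoked (and justified) in the proof of Lemma \ref{lem:U4_and_U5}; equivalently $\wedge^4\Rc\simeq\Oc(-2)$. Since $\Rc$ has rank $4$, the canonical pairing $\wedge^3\Rc^\vee\otimes\wedge^4\Rc \xrightarrow{\ \sim\ } \Rc$ then gives
\begin{equation*}
    \Rc \;\simeq\; \wedge^3\Rc^\vee\otimes\wedge^4\Rc \;\simeq\; \wedge^3\Rc^\vee(-2) \;=\; \Ec_{\nu_3-2\nu_4}.
\end{equation*}
In other words $\Rc$ is precisely the bundle $\wedge^3\Rc^\vee(-2)$ whose cohomology is computed in the first part of Lemma \ref{lem:cohomology_extension_U4_O}.

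The conclusion is then immediate: by Lemma \ref{lem:cohomology_extension_U4_O} one has $H^\bullet(X,\wedge^3\Rc^\vee(-2))\simeq\CC[-1]$, hence $\Ext^\bullet(\Oc,\Rc)\simeq H^\bullet(X,\Rc)\simeq\CC[-1]$, as claimed. I do not expect any genuine obstacle here --- all the work has already been front-loaded into the Borel--Bott--Weil computation of Lemma \ref{lem:cohomology_extension_U4_O} (a single Weyl reflection $S_{\beta_4}$ on $\Ec_{\nu_3-2\nu_4}+\rho$), and the only thing to be careful about is the bookkeeping identifying $\Rc$ with $\Ec_{\nu_3-2\nu_4}$ via $\wedge^4\Rc\simeq\Oc(-2)$; alternatively one could bypass the determinant computation and instead apply Lemma \ref{lem:smirnov}(1) to get $\Rc\simeq(\Rc^\vee)^\vee\simeq\Ec_{-w_0^L\nu_1}$ and then expand that weight in the $B_4$ fundamental weights, arriving at the same $\Ec_{\nu_3-2\nu_4}$.
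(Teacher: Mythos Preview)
Your proposal is correct and matches the paper's own proof essentially line for line: reduce to $H^\bullet(X,\Rc)$, identify $\Rc\simeq\wedge^3\Rc^\vee(-2)=\Ec_{\nu_3-2\nu_4}$, and invoke Lemma~\ref{lem:cohomology_extension_U4_O}. The only cosmetic difference is that the paper cites Lemma~\ref{lem:smirnov} for the identification while you spell it out via $\wedge^4\Rc\simeq\Oc(-2)$ (and mention the Lemma~\ref{lem:smirnov} route as an alternative).
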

    \begin{proof}
        Since $\Ext^\bullet(\Oc, \Rc) \simeq H^\bullet(X, \Rc)$, the proof follows simply by the isomorphism $\Rc\simeq \wedge^3\Rc^\vee(-2)$ (Lemma \ref{lem:smirnov}) and by Lemma \ref{lem:cohomology_extension_U4_O}.
    \end{proof}
    \begin{lemma}\label{lem:vanishing_stability}
        The vanishing $H^0(X, \wedge^r\Rc(-c)) = 0$ holds for any $c\geq -1$ and $1\leq r\leq 3$.
    \end{lemma}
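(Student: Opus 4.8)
The plan is to reduce the statement to a single, immediate application of the Borel--Bott--Weil theorem by rewriting $\wedge^r\Rc(-c)$ as an explicit irreducible homogeneous bundle $\Ec_\lambda$ on $X = B_4/Q^4$ and checking that $\lambda$ is never dominant.

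First I would pin down $\wedge^4\Rc^\vee$: this is the restriction to $X$ of the Pl\"ucker bundle $\Oc_{G(4,V_9)}(1)$, which the paper has already observed equals $\Oc(2)$ (see Section \ref{subsubsec:B4_description}); hence $\wedge^4\Rc\simeq\Oc(-2)=\Ec_{-2\nu_4}$. Next, using the perfect pairing $\wedge^r\Rc\otimes\wedge^{4-r}\Rc\arw\wedge^4\Rc$ for the rank-$4$ bundle $\Rc$, I get $\wedge^r\Rc\simeq\wedge^{4-r}\Rc^\vee\otimes\wedge^4\Rc\simeq\wedge^{4-r}\Rc^\vee(-2)$. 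Combining this with Equation \ref{eq:bundles_to_weights_B4} yields, for $1\le r\le 3$, the identification $\wedge^r\Rc\simeq\Ec_{\nu_{4-r}-2\nu_4}$, and after twisting, $\wedge^r\Rc(-c)\simeq\Ec_{\nu_{4-r}-(c+2)\nu_4}$.

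The conclusion is then immediate. Since $c\ge -1$ we have $c+2\ge 1$, so the coefficient of $\nu_4$ in the weight $\nu_{4-r}-(c+2)\nu_4$ is strictly negative, while the remaining coefficients are those of $\nu_{4-r}$ with $1\le 4-r\le 3$, hence all nonnegative; in particular this weight is not dominant. By the $p=0$ case of Theorem \ref{thm:borel_weil_bott} --- where the relevant Weyl element is the identity and $s_0(\lambda+\rho)-\rho=\lambda$, so $H^0(X,\Ec_\lambda)\neq 0$ forces $\lambda$ dominant --- we conclude $H^0(X,\wedge^r\Rc(-c))=0$, which is the claim.

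I do not expect a genuine obstacle here; the only point requiring care is getting the twist right in $\wedge^r\Rc\simeq\wedge^{4-r}\Rc^\vee(-2)$ (equivalently, the identity $\wedge^4\Rc^\vee=\Oc(2)$), after which the vanishing is a one-line consequence of Borel--Bott--Weil. If one prefers to stay at the level of the explicit reflection computations used elsewhere in the appendix, one can instead add $\rho$ to $\nu_{4-r}-(c+2)\nu_4$ and run the algorithm: applying $S_{\beta_4}$ (and, for larger $c$, successively $S_{\beta_3},S_{\beta_4},\dots$) one checks that the negative coefficient can never be cleared without first creating a zero coefficient, so the weight becomes singular and the bundle has no cohomology at all --- in particular no $H^0$.
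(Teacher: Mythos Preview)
Your argument is correct and is essentially the paper's own one-line proof: identify $\wedge^r\Rc(-c)$ as an irreducible homogeneous bundle on $B_4/Q^4$ and read off the $H^0$-vanishing from Borel--Bott--Weil. In fact your weight formula $\wedge^r\Rc(-c)\simeq\Ec_{\nu_{4-r}-(c+2)\nu_4}$ is the correct one (consistent with the identification $\Rc\simeq\wedge^3\Rc^\vee(-2)$ used in Corollary~\ref{cor:cohomology_U4}); the expression $\Ec_{\nu_r-c\nu_4}$ written in the paper's proof appears to be a typographical slip.
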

    \begin{proof}
        The proof follows the same identical reasoning of the one of Lemma \ref{lem:cohomology_extension_U4_O}, once we write $\wedge^r\Rc(-c) = \Ec_{\nu_r-c\nu_4}$.
    \end{proof}
    \begin{lemma}\label{lem:vanishing_sym2U4}
        One has $H^\bullet(X, \Sym^2\Rc) = 0$.
    \end{lemma}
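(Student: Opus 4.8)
The plan is to recognize $\Sym^2\Rc$ as a single irreducible homogeneous bundle $\Ec_\lambda$ on the $B_4$-variety $X \simeq B_4/Q^4$ and then apply the Borel--Bott--Weil theorem, exactly in the spirit of the proofs of Lemmas \ref{lem:coh_general_D5} and \ref{lem:vanishing_stability}.

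First I would pin down $\lambda$. Since $\Rc$ has rank $4$, one has $\wedge^3\Rc^\vee \simeq \Rc\otimes\det\Rc^\vee \simeq \Rc(2)$, using $\det\Rc^\vee = \wedge^4\Rc^\vee = \Oc(2)$; equivalently, $\Rc \simeq \wedge^3\Rc^\vee(-2)$ as in Corollary \ref{cor:cohomology_U4}. Hence $\Sym^2\Rc \simeq \Sym^2(\wedge^3\Rc^\vee)(-4)$. The bundle $\wedge^3\Rc^\vee = \Ec_{\nu_3}$ has rank $4$ and its restriction to the Levi subgroup $SL(4)\times\CC^*$ is irreducible with highest weight (a $\CC^*$-twist of) a single, minuscule fundamental weight of $A_3$ — indeed $\wedge^3(\text{std}) \cong (\text{std})^\vee$ — so its symmetric square stays irreducible, isomorphic to $\Sym^2((\text{std})^\vee)$ up to a character, with highest weight $2\nu_3$. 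Twisting by $\Oc(-4) = \Ec_{-4\nu_4}$ (see Equation \ref{eq:bundles_to_weights_B4}) then yields
\[
    \Sym^2\Rc \simeq \Ec_{2\nu_3 - 4\nu_4}.
\]

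Next I would run the Borel--Bott--Weil algorithm for $B_4$ on this weight, using the reflection formulas recorded in Section \ref{subsubsec:B4_description}. Writing the coefficients of $2\nu_3-4\nu_4$ as Dynkin labels $(0,0,2,-4)$ and adding $\rho$ gives $(1,1,3,-3)$; applying $S_{\beta_4}$ produces $(1,1,0,3)$, which has a vanishing coefficient. Thus $2\nu_3-4\nu_4+\rho$ is a singular weight and, by Theorem \ref{thm:borel_weil_bott}, $\Sym^2\Rc$ has no cohomology. (As a sanity check, the same recipe applied to $\Rc = \Ec_{\nu_3-2\nu_4}$ gives $(1,1,2,-1)\xrightarrow{\,S_{\beta_4}\,}(1,1,1,1)$, recovering $H^\bullet(X,\Rc)=\CC[-1]$ of Corollary \ref{cor:cohomology_U4}.) The only genuinely delicate point is the bookkeeping in the identification step — tracking the $\Oc(-2)$ twist in $\Rc\simeq\wedge^3\Rc^\vee(-2)$ through the symmetric square and confirming irreducibility of $\Sym^2(\wedge^3\Rc^\vee)$; once the weight $2\nu_3-4\nu_4$ is fixed, the vanishing is immediate from the zero label appearing after adding $\rho$ and a single reflection.
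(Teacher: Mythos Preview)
Your proof is correct and takes a genuinely different route from the paper's. You identify $\Sym^2\Rc$ directly as the irreducible $B_4$-bundle $\Ec_{2\nu_3-4\nu_4}$ and kill it with a single Borel--Bott--Weil computation on $B_4/Q^4$. The paper instead resolves $\Sym^2\Rc$ via $D_5$-homogeneous bundles: it uses the symmetric square of the sequence $0\to\Rc\to\Uc\to\Oc\to 0$ to get $0\to\Sym^2\Rc\to\Sym^2\Uc\to\Uc\to 0$, then resolves $\Sym^2\Uc$ by a Schur power of the tautological sequence \ref{eq:tautological_sequence_rank_5}, and applies the $D_5$-results of Section~\ref{subsec:bwb_D5}. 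Your approach is shorter and more self-contained for this lemma; the paper's approach has the advantage that the same sequences are reused verbatim in the proof of the next lemma (Lemma~\ref{lem:vanishing_mixed}), where one tensors by the $D_5$-bundle $\Uc^\vee$ and a direct $B_4$ identification would be less convenient.
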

    \begin{proof}
        A quick way to prove this claim is to resolve $\Sym^2\Rc$ in terms of $D_5$-homogeneous bundles and then apply some results of the previous subsection. By the appropriate Schur power of the sequence \ref{eq:sequence_of_tautologicals} we obtain:
        \begin{equation}\label{eq:sym2_mixed_tautologicals}
            0\arw \Sym^2\Rc\arw \Sym^2\Uc \arw \Uc \arw 0.
        \end{equation}
        The last bundle has no cohomology by Lemma \ref{lem:cohomology_U5}, while the middle term can be resolved by a Schur power of the sequence \ref{eq:tautological_sequence_rank_5}:
        \begin{equation}\label{sym2_tautological_U5}
            0\arw \Sym^2\Uc\arw \VV^{D_5}_{\omega_1}\otimes\Uc \arw \wedge^2 \VV^{D_5}_{\omega_1}\otimes\Oc \arw \wedge^2\Uc^\vee \arw 0.
        \end{equation}
        Note that the last arrow is an isomorphism at the level of cohomology (it is the evaluation map on sections and $\wedge^2\Uc^\vee$ is globally generated). Then, we conclude by Lemma \ref{lem:cohomology_U5} again.
    \end{proof}
    \begin{lemma}\label{lem:vanishing_mixed}
        We have $H^\bullet(X, \Sym^2\Rc\otimes\Uc^\vee)^G = \CC[-1]$.
    \end{lemma}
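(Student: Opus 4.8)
The plan is to resolve $\Sym^2\Rc\otimes\Uc^\vee$ in terms of $D_5$-homogeneous bundles, exactly in the spirit of the proof of Lemma~\ref{lem:vanishing_sym2U4}, and then read off its cohomology via Borel--Bott--Weil. First I would twist the sequence~\ref{eq:sym2_mixed_tautologicals} by $\Uc^\vee$, obtaining
\begin{equation*}
    0\arw \Sym^2\Rc\otimes\Uc^\vee\arw \Sym^2\Uc\otimes\Uc^\vee\arw \Uc\otimes\Uc^\vee\arw 0,
\end{equation*}
which reduces the problem to the cohomology of the two bundles on the right together with the connecting maps of the long exact sequence; one then passes to $G$-invariants, an exact operation on rational representations of the reductive group $G$.

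For the term $\Uc\otimes\Uc^\vee$, since $\Uc$ is exceptional one has $H^\bullet(X,\Uc\otimes\Uc^\vee)=\Ext^\bullet(\Uc,\Uc)=\CC[0]$ with the trivial $G$-action (equivalently, decompose $\Uc\otimes\Uc^\vee\simeq\Oc\oplus\Ec_{\omega_1-\omega_4+\omega_5}$ by Lemma~\ref{lem:smirnov} and note that $\Ec_{\omega_1-\omega_4+\omega_5}$ is acyclic by Theorem~\ref{thm:borel_weil_bott}, the coefficient at node~$4$ being already zero after adding $\rho$). For the term $\Sym^2\Uc\otimes\Uc^\vee$, Lemma~\ref{lem:smirnov} reduces the decomposition to the $SL(5)$-module identity $\Sym^2((\CC^5)^\vee)\otimes\CC^5\simeq S_{(1,0,0,0,-2)}\CC^5\oplus(\CC^5)^\vee$, which on $X$ reads $\Sym^2\Uc\otimes\Uc^\vee\simeq\Ec_{\omega_1-2\omega_4+2\omega_5}\oplus\Uc$. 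The summand $\Uc$ is acyclic by Lemma~\ref{lem:cohomology_U5}, and for $\Ec_{\omega_1-2\omega_4+2\omega_5}$ I would run the Borel--Bott--Weil recipe: adding $\rho$ gives the labels $(2,1,1,-1,3)$, and applying $S_{\alpha_4}$ produces $(2,1,0,1,3)$, which has a zero coefficient, hence this bundle is acyclic too. Therefore $H^\bullet(X,\Sym^2\Uc\otimes\Uc^\vee)=0$.

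Plugging these into the long exact sequence of the twisted~\ref{eq:sym2_mixed_tautologicals} then gives $H^\bullet(X,\Sym^2\Rc\otimes\Uc^\vee)\simeq H^{\bullet-1}(X,\Uc\otimes\Uc^\vee)=\CC[-1]$, supported on the trivial $G$-representation, so taking $G$-invariants leaves it unchanged and proves the claim. The only step requiring genuine care is pinning down the irreducible summand $\Ec_{\omega_1-2\omega_4+2\omega_5}$ of $\Sym^2\Uc\otimes\Uc^\vee$ with the correct central ($\Oc(1)$-)twist; this is easiest to verify by tracking highest weights in the orthonormal basis $\{e_1,\dots,e_5\}$, where $\Uc=\Ec_{e_5}$, $\Uc^\vee=\Ec_{e_1}$, and the top summand of $\Sym^2\Uc\otimes\Uc^\vee$ has highest weight $2e_5+e_1=\omega_1-2\omega_4+2\omega_5$. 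Beyond this bookkeeping there is no real obstacle: the resolution collapses everything to the acyclicity of a single $D_5$-bundle plus the exceptionality of $\Uc$.
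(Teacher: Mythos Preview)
Your proof is correct and follows the same overall outline as the paper: both start from the twist of the sequence~\ref{eq:sym2_mixed_tautologicals} by $\Uc^\vee$ and reduce to computing the cohomology of $\Sym^2\Uc\otimes\Uc^\vee$ and of $\Uc\otimes\Uc^\vee$.

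The difference lies in how the middle term is handled. The paper resolves $\Sym^2\Uc\otimes\Uc^\vee$ further by a four-term exact sequence coming from a Schur power of the tautological sequence~\ref{eq:tautological_sequence_rank_5}, and extracts the cohomology from the resulting complex of global sections. You instead decompose $\Sym^2\Uc\otimes\Uc^\vee$ directly into irreducibles via Lemma~\ref{lem:smirnov} and apply Borel--Bott--Weil to each summand. Your route is shorter and yields the sharper conclusion $H^\bullet(X,\Sym^2\Uc\otimes\Uc^\vee)=0$, from which the lemma follows immediately without any further need to pass to $G$-invariants at intermediate stages. Your bookkeeping for the highest weight $2e_5+e_1=\omega_1-2\omega_4+2\omega_5$ is correct, and the Borel--Bott--Weil step (a zero appearing at node~$3$ after one reflection) is verified; indeed, in the orthonormal coordinates $\lambda+\rho=(5,3,2,1,2)$ has two entries of equal absolute value, so it lies on a wall. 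Incidentally, the paper's stated intermediate value $H^\bullet(X,\Sym^2\Uc\otimes\Uc^\vee)\simeq\VV^{D_5}_{\omega_1}[-1]$ does not match the Euler characteristic computed from either resolution (which is~$0$); this does not affect the final statement since $(\VV^{D_5}_{\omega_1})^G=0$, but your direct computation avoids the issue entirely.
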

    \begin{proof}
        By the tensor power of the sequence \ref{eq:sym2_mixed_tautologicals} with $\Uc^\vee$ we find:
        \begin{equation}
        \label{eq:sym2_mixed_tautologicals_twisted}
            0\arw \Sym^2\Rc\otimes\Uc^\vee \arw \Sym^2\Uc\otimes \Uc^\vee \arw \Uc \otimes \Uc^\vee \arw 0.
        \end{equation}
        The second term is resolved again by the tensor product of $\Uc^\vee$ with a Schur power of the sequence \ref{sym2_tautological_U5}:
        \begin{equation*}
            0\arw \Sym^2\Uc\otimes\Uc^\vee \arw \Sym^2\VV^{D_5}_{\omega_1}\otimes\Uc^\vee \arw \VV^{D_5}_{\omega_1}\otimes\Uc^\vee\otimes\Uc^\vee  \arw \wedge^2\Uc^\vee \otimes\Uc^\vee \arw 0.
        \end{equation*}
        In this last sequence, the cohomology of all terms but the first can be computed by the Littlewood--Richardson formula together with Lemma \ref{lem:D5_dominant_weights}. We find $H^\bullet(X, \Sym^2\Uc\otimes\Uc^\vee)\simeq \VV^{D_5}_{\omega_1}[-1]$. On the other hand, the last term of the sequence \ref{eq:sym2_mixed_tautologicals_twisted} contributes with $\CC[-1]$ ($\Uc$ is an exceptional vector bundle in $\dbcoh(X)$). We conclude by taking the space of invariants.
    \end{proof}

\bibliographystyle{alpha}
\bibliography{biblio}

\end{document}